\documentclass[a4paper,10pt]{article}
\title{$A_{\infty}$-algebras, spectral sequences and exact couples}
\author{Estanislao Herscovich
\footnote{Departamento de Matem\'atica, FCEyN, Universidad de Buenos Aires, Argentina. 
The author is also a research member of CONICET (Argentina). 
On leave of absence from Institut Joseph Fourier, Universit\'e Grenoble I, France. 
The author would also like to thank the Alexander von Humboldt Foundation and the Bielefeld University, for their support during part of this work. This work was also partially supported by UBACYT 20020130200169BA and PICT 2012-1186.}}
\date{}

\usepackage{amsmath,amsthm,amsfonts,amssymb,stmaryrd}
\usepackage{latexsym}
\usepackage{fullpage}
\usepackage{a4,color,palatino,fancyhdr}
\usepackage{graphicx}
\usepackage{float}  
\usepackage[small, bf, margin=90pt, tableposition=bottom]{caption}
\RequirePackage[T1]{fontenc}
\usepackage[breaklinks, naturalnames]{hyperref}
\usepackage{amsrefs}

\input{xy}
\xyoption{all}
\xyoption{poly}
\usepackage[all]{xy}

\newtheorem{theorem}{Theorem}[section]
\newtheorem{proposition}[theorem]{Proposition}

\newtheorem{lemma}[theorem]{Lemma}

\newtheorem{remark}[theorem]{Remark}
\newtheorem{example}[theorem]{Example}

\numberwithin{equation}{section}



\def\Ker{\mathop{\rm Ker}\nolimits}

\newcommand\ZZ{{\mathbb{Z}}}
\newcommand\NN{{\mathbb{N}}}


\begin{document}

\maketitle
                                                     
\hrulefill
\begin{abstract}
We study in this article a possible further structure of homotopic nature on multiplicative spectral sequences. 
More precisely, since Kadeishvili's theorem asserts that, given a dg (or $A_{\infty}$-)algebra, its cohomology has also a structure of $A_{\infty}$-algebra such that both become quasi-isomorphic, and in a multiplicative spectral sequence one considers the cohomology of dg algebras when moving from a term to the next one, a natural problem that arises is to study how this two possible structures intertwine. 
We give such a homotopic structure proposal, called \emph{$A_{\infty}$-enhancement} of multiplicative spectral sequences, which could be of interest in our opinion. 
As far we know, this construction was studied only recently by S. Lapin, even though he did not state any definition. 
Seeing that the procedure considered by Lapin is rather complicated to handle, we propose an equivalent but in our opinion easier approach. 
In particular, from our definition we show that the canonical multiplicative spectral sequence obtained from a filtered dg (or $A_{\infty}$-)algebra, which could be viewed as the main example, has such an $A_{\infty}$-enhancement. 
\end{abstract}

\textbf{Mathematics subject classification 2010:} 14D15, 16E45, 16S80, 16W70, 16W80, 18G40, 18G55.

\textbf{Keywords:} homological algebra, spectral sequences, dg algebras, $A_{\infty}$-algebras.

\hrulefill
\section{Introduction}

The aim of this article is to shed some light on the (strongly) homotopy algebraic structure on spectral sequences. 
It does not aim to give a full answer but to humbly provide some ingredients, which may be of interest. 
We first describe where our motivation to this work comes from. 
Spectral sequences were first defined by J. Leray in his seminal paper \cite{Ler46}, as a clever procedure to deal with cohomological computations in a clean manner. 
According to \cite{McC99}, Leray's work was closer to what we would now call multiplicative spectral sequences, which are spectral sequences provided with a product turning each term 
of the spectral sequence into a dg algebra. 
We recall that the underlying algebra of each successive term of a multiplicative spectral sequence is obtained by taking cohomology of the dg algebra given by the previous term. 
On the other hand, as first noticed by T. Kadeishvili in \cite{K82}, the cohomology of an $A_{\infty}$-algebra has a minimal $A_{\infty}$-algebra structure (\textit{i.e.} with vanishing differential) 
extending the canonical product on cohomology such that both $A_{\infty}$-algebras become quasi-isomorphic. 
This in particular applies to dg algebras, which are just a particular case of $A_{\infty}$-algebras. 
The natural question to us would be thus how these two operations can be put together, or in other words, how we could enhance the dg algebras structures of each term of a multiplicative spectral sequence 
in order to obtain certain strongly homotopy algebraic compatibility on it. 
To the best of our knowledge this question has only deserved little attention. 
In fact, despite it was not considered in these terms as far as we understand, a situation handling both structures (\textit{i.e.} multiplicative spectral sequences and $A_{\infty}$-algebra structures) 
has already been considered by S. Lapin in \cite{La02a}, and with several variations in some other of his successive articles, even though he does not seem to explicitly state any definition of a so-called $A_{\infty}$-enhancement, nor to study many of its properties.  
He did however considered a procedure applied to (deformations of) $A_{\infty}$-algebras which is of key importance in our opinion, although it does not appear to have been studied in detail 
(for instance, there does not seem to be any discussion if the canonical multiplicative spectral sequence associated to a filtration of a dg algebra obtained 
by the usual theory coincides with his construction at \cite{La08}). 
This procedure allow us to propose what an $A_{\infty}$-enhancement of multiplicative spectral sequences should be. 
The proposed name pours from the tradition of enhancements of triangulated categories, due to the intention of remarking the addition of an extra structure, 
though probably the situation with multiplicative spectral sequences is by far not as complicated as the one with triangulated categories. 
We moreover propose an equivalent but easier definition which is simpler to handle, and from which one can prove several interesting properties. 
In particular, one gets from it that the (canonical) multiplicative spectral sequence associated to a filtered dg (or $A_{\infty}$-)algebra, which is the main example one may think of, has an $A_{\infty}$-enhancement 
(see Propositions \ref{proposition:compatres} and \ref{proposition:final}). 
Another interesting feature of our proposal is that it stays in the world of dg algebras if that is the departure point, in open contrast to the procedure by Lapin. 
We do not claim that this definition is the best suited or that it depicts perfectly how these two structures of $A_{\infty}$-algebras and spectral sequences may interact, but the nice properties we have obtained 
make us believe it should deserve some attention for it may help understand how these structures are dealt with together.
 
We would like to remark that we also became interested in the problem for its possible applications to persistent homology, which originated from the work of H. Edelsbrunner, D. Letscher and A. Zamorodian in \cite{ELZ}. 
In persistent homology one studies particular structures coming from exact couples 
associated to a filtration of a complex (see \cite{BP}, and references therein). 
As noted by F. Belch\'{\i} and A. Murillo in \cite{BM}, one may gain some profit from the $A_{\infty}$-(co)algebra structure on cohomology. 
Their point of view is however completely different from ours (regardless of the fact they consider $A_{\infty}$-coalgebras, as pointed out in the last paragraph of this section), since they produce bars codes by means of the higher structure maps on the cohomology of the entire complex in a similar fashion to those produced in plain persistent cohomology theory. 
In our case, we try to provide the complete $A_{\infty}$-algebraic structure on the cohomology groups of each term of the corresponding exact couples. 
Taking into account that the $A_{\infty}$-algebra structures considered in Kadeishvili's theorem are only unique up (noncanonical) quasi-isomorphism, further work is necessary in order to apply our general results to actual computations in persistent homology, 
as far as we understand, for one should need in principle to build a metric (and then a probabilistic measure) on the space of these structures which is invariant under quasi-isomorphisms, 
in a similar manner to the bottleneck metric in the space of barcodes (see \cite{BGMP} and references therein). 

The article is organized as follows. 
In Section \ref{section:prel} we provide the basic theory about the two algebraic structures we will be dealing with: spectral sequences and $A_{\infty}$-algebras. 
In fact, in Subsection \ref{subsection:spe} we recall the main definitions and properties on the theory of spectral sequences and exact couples, which are well-known among the experts.  
Subsection \ref{subsection:kad} deals with the mentioned result by Kadeishvili, giving an $A_{\infty}$-algebra structure on the cohomology of an $A_{\infty}$-algebra structure such that both become 
quasi-isomorphic. 
We provide a complete proof for convenience of the reader. 
This section contains some definitions which are not completely standard, but which are more or less the canonical ones if one is intending to harmonize the bidegree in the definition of spectral sequences 
with the exact couples and the $A_{\infty}$-algebra structures. 

In Section \ref{section:defo} we recall the basic definitions and fact about the deformation theory of $A_{\infty}$-algebras. 
This has been studied by A. Fialowski and M. Penkava in \cite{FP}, and by E. Wu in his Master thesis \cite{Wu}. 
We adapt however the theory to our purposes of considering bigradings. 
The main result of that section, namely Theorem \ref{thm:quasiisofor}, is proved in detail. 
This was proved by completely different methods by Lapin in \cite{La02a}, Thm. 3.1 and Cor. 3.1, since ours uses obstruction theory, instead of homological perturbation theory. 
Our manner to proceed is more general for we do not need $k$ to be a field (see also Remark \ref{remark:dife}), and we take into account the bigrading. 
Section \ref{section:fildefo} includes how filtrations on $A_{\infty}$-algebras give rise to formal (bigraded) deformations in the previous sense, by adapting the well-known Rees algebra constructions, 
and we show that any formal (bigraded) deformations in fact appears as the Rees $A_{\infty}$-algebra of a filtered $A_{\infty}$-algebra. 

In the last section we recall the procedure studied by Lapin in \cite{La02a}, and we use it to define an $A_{\infty}$-enhancement of multiplicative spectral sequences.   
This construction is hard to handle, so we provide a much more manageable way (in our opinion) to deal with it. 
It involves the functor $\operatorname{D}$ we define there, and show that our procedure is equivalent to the one studied by Lapin. 
In any case, any of the definitions involve the notion of formal (bigraded) deformation of $A_{\infty}$-algebras, which is analyzed in the previous two sections. 
Moreover, we derive from our alternative construction several interesting results. 
In particular, we prove that one may associate an exact couple (or even a collection of them) to an $A_{\infty}$-enhancement of multiplicative spectral sequences, or more precisely, 
to a formal bigraded deformation of an $A_{\infty}$-algebra, and the multiplicative spectral sequence it determines is in fact isomorphic to the one coming 
from the formal bigraded deformation of an $A_{\infty}$-algebra (see Proposition \ref{proposition:compatres}).
We also show that the canonical multiplicative spectral sequence associated to a filtered $A_{\infty}$-algebra, which is the main example in the literature, has an $A_{\infty}$-enhancement 
(see Proposition \ref{proposition:final}). 
One interesting consequence of our detour was the corollary that any spectral sequence over a field comes from a filtration on a complex. 

Finally we would like to remark that, for $k$ a field, all our statements hold if one takes $A_{\infty}$-coalgebras and deformations of $A_{\infty}$-coalgebras instead 
(by following the lines of the definition of deformations of coalgebras in \cite{GS92}, p. 54. See also \cite{SS93}, Chapter 9, Section 1). 
The proof are somehow the duals of our proof, but the maps of the dual statements of Kadeishvili's theorem and of Theorem \ref{thm:quasiisofor} should always 
have as domain the cohomology, as in the algebraic case we present, in order to properly deal with cohomological obstructions. 
We leave these proofs to the interested reader but they are in our opinion \emph{mutatis mutandi}. 

\section{Preliminaries on basic algebraic structures}
\label{section:prel}

We follow the convention on gradings and definitions given in \cite{Herrec}, Sections $2$ and $5$. 
We recall that, as in the previously mentioned article, $k$ will denote a commutative ring with unit (which we also consider as a unitary graded ring concentrated in degree zero), 
the term \emph{module} (sometimes decorated by adjectives such as graded, or dg) will denote a symmetric bimodule over $k$ (correspondingly decorated), 
and morphisms between modules will always be $k$-linear (and satisfying further requirements if the modules are decorated as before), unless otherwise stated. 
The only difference with the previously mentioned article is that the Adams grading of the (differential) graded modules will be considered to be trivial (\textit{i.e.} all elements have zero Adams degree), 
so they are provided \textit{a priori} with only one grading, which is called \emph{cohomological}. 
The reason for such convention is just to simplify the exposition: since the Adams grading will play no role in this article (in open contrast to the case dealt with in \cite{Herrec}), we prefer avoiding it, 
even though it could be incorporated \textit{verbatim}. 
We also recall that, if $V = \oplus_{n \in \NN} V^{n}$ is a graded $k$-module, $V[1]$ is the graded module over $k$ whose $n$-th homogeneous component $V[1]^{n}$ is given by $V^{n+1}$, for all $n \in \ZZ$, and it is called the \emph{shift} of $V$. 

We shall use the following convention throughout this article. 
Given a collection of elements $\{ v_{i} : i \in I \}$ in a $k$-module, a sum of the form $\sum_{(i \in I)} v_{i}$ will always mean that the previous collection is finitely supported, 
\textit{i.e.} there exist a finite subset $I' \subseteq I$ such that $a_{i} = 0$, for all $i \in I \subseteq I'$. 
This notation is used in order to distinguish the previous situation from the usual convergent infinite sums on a topological vector space or module. 
As usual, all unadorned tensor products $\otimes$ would be over $k$. 

Finally, $\NN$ will denote the set of (strictly) positive integers, whereas $\NN_{0}$ 
will be the set of nonnegative integers. 
Similarly, for $N \in \NN$ (resp., $N \in \NN_{0}$), we denote by $\NN_{\leq N}$ (resp., $\NN_{0,\leq N}$) the set of positive (resp., nonnegative) integers less than or equal to $N$. 
Of course, similar notation could be used for other inequality signs.   

\subsection{\texorpdfstring{Spectral sequences and exact couples of dg algebras}{Spectral sequences and exact couples of dg algebras}} 
\label{subsection:spe}

In this section we briefly recall the definition of spectral sequences and exact couples, and we study the particular case in which they are provided with further algebraic structure. 
The notion of spectral sequence is due to J. Leray in his article \cite{Ler46}, and it was presented in more algebraic terms by J.-L. Koszul in \cite{Kos47} (see also \cite{Kos47b}).  
Though the study of the plain algebraic structure on spectral sequences already appeared in the literature from the very beginning in the articles of Leray and Koszul, it was J.-P. Serre in his article \cite{Se50I} 
(see also \cites{Se50II, Se50III, Se50a}) who introduced the generality we use today (see \cite{McC99}, Sections 3 to 5).  
Later on, W. S. Massey introduced the definition of exact couples (see \cite{Ma52} and \cite{Ma53}), as a more systematic manner to produce spectral sequences, 
and studied further in \cite{Ma54} the algebraic structure required on an exact couple in order to obtain the corresponding algebraic structure on the associated spectral sequence. 
A standard exposition on these subjects can be found in \cite{McC01}, Part I, or \cite{W}, Ch. 5, which we will partly follow.
We are interested in the strongly homotopy algebraic (sha) structure one may provide to spectral sequences and exact couples. 
Concerning a sha structure version on the former, it does not seem to have deserved much attention up to the present time, and as far as we know, the only near references we may provide are some articles by 
S. V. Lapin 
(\textit{cf.} \cites{La02a, La02b, La08}), where the author worked with a (in appearance) generalization of the notion of $A_{\infty}$-algebra (as remarked by V. Lyubaschenko in the AMS reference MR{1921810 (2003i:55023)}). 
In fact, the precise notion of ``$A_{\infty}$-enhancement'' of (multiplicative) spectral sequences is actually lacking in the previous articles, though several different occurrences of $A_{\infty}$-algebra structures appear on the pages of the spectral sequences considered in the different previously mentioned articles.  

A \emph{(cohomological) spectral sequence (of $k$-modules)} is given by the following data $(E^{p,q}_{r},d^{p,q}_{r}, i^{p,q}_{r})_{p,q \in \ZZ, r \in \NN}$, where $E^{p,q}_{r}$ is a $k$-module, 
$d_{r}^{p,q} : E^{p,q}_{r} \rightarrow E^{p+r,q-r+1}_{r}$ is a $k$-linear map satisfying that $d_{r}^{p+r,q-r+1} \circ d_{r}^{p,q} = 0$, for all $p,q \in \ZZ$ and $r \in \NN$, 
and $i^{p,q}_{r} : H^{p,q}_{r}(E) \rightarrow E^{p,q}_{r+1}$ is an isomorphism of $k$-modules, where $H^{p,q}_{r}(E)$ is defined as the quotient
\[     \mathrm{Ker}(d^{p,q}_{r})/\mathrm{Im}(d^{p-r,q+r-1}_{r}),     \]
for each $p, q \in \ZZ$ and $r \in \NN$. 
We may also allow that the indices $r$ belong to a set of the form $\NN_{\geq s}$, for some $s \in \NN$. 
We will say in this case that we have a \emph{(cohomological) spectral sequence starting at $s$}. 
Given $s, s' \in \NN$ with $s < s'$ any spectral sequence $(E^{p,q}_{r},d^{p,q}_{r}, i^{p,q}_{r})_{p,q \in \ZZ, r \in \NN_{\geq s}}$ starting at $s$ defines a spectral sequence 
$(E^{p,q}_{r},d^{p,q}_{r}, i^{p,q}_{r})_{p,q \in \ZZ, r \in \NN_{\geq s'}}$ starting at $s'$ 
by forgetting the terms of the spectral sequence indexed by $s, \dots, s'-1$, and it will be called the underlying spectral sequence starting at $s'$. 
Most of what we say below for usual spectral sequences clearly apply in the corresponding obvious form by restricting the indices in the appropriate manner to spectral sequences starting at some $s \in \NN$. 
Given a spectral sequence as before, we consider for each $r \in \NN$ the bigraded module 
\[     E_{r} = \bigoplus_{p,q \in \ZZ} E_{r}^{p,q},     \]
which is provided with a morphism $d_{r} : E_{r} \rightarrow E_{r}$ whose restriction to $E_{r}^{p,q}$ is $d^{p,q}_{r}$. 
We will sometimes regard $E_{r}$ as a graded module over $\ZZ$ with the \emph{total degree} defined by setting $E^{p,q}_{r}$ in total degree $p+q$, 
and in this case $d_{r}$ becomes a morphism of total degree $1$ satisfying that $d_{r} \circ d_{r} = 0$. 
Hence $(E_{r},d_{r})$ becomes cochain complex of $k$-modules and its cohomology $H(E_{r},d_{r})$ in fact coincides with $\oplus_{p, q \in \ZZ} H^{p,q}_{r}(E)$. 
On the other hand, the maps $i^{p,q}_{r}$ induce an isomorphism of bigraded modules $i_{r} : H(E_{r},d_{r}) \rightarrow E_{r+1}$.  
We will sometimes denote the spectral sequence only by $(E_{r},d_{r},i_{r})_{r \in \NN}$, or even by $(E_{r},d_{r})_{r \in \NN}$ to simplify our notation. 

For $(E_{r},d_{r},i_{r})_{r \in \NN}$ and $(E'_{r},d'_{r},i'_{r})_{r \in \NN}$ two spectral sequences, a \emph{morphism} from the former to the latter is a collection of morphisms 
$f_{r} : E_{r} \rightarrow E'_{r}$ of bigraded $k$-modules of bidegree $(0,0)$ for each $r \in \NN$, such that $f_{r} \circ d_{r} = d'_{r} \circ f_{r}$, and $f_{r+1} \circ i_{r} = i'_{r} \circ H(f_{r})$, 
for all $r \in \NN$, where $H(f_{r}) : H(E_{r},d_{r}) \rightarrow H(E'_{r},d'_{r})$ is the map induced by $f_{r}$ at the level of cohomology. 
Given $s, s' \in \NN$ with $s < s'$, two spectral sequences $(E^{p,q}_{r},d^{p,q}_{r}, i^{p,q}_{r})_{p,q \in \ZZ, r \in \NN_{\geq s}}$ and $(F^{p,q}_{r},\delta^{p,q}_{r}, j^{p,q}_{r})_{p,q \in \ZZ, r \in \NN_{\geq s'}}$ 
starting at $s$ and at $s'$, resp., are said to be \emph{compatible}, if the underlying spectral sequence starting at $s'$ of $(E^{p,q}_{r},d^{p,q}_{r}, i^{p,q}_{r})_{p,q \in \ZZ, r \in \NN_{\geq s}}$ 
is isomorphic to $(F^{p,q}_{r},\delta^{p,q}_{r}, j^{p,q}_{r})_{p,q \in \ZZ, r \in \NN_{\geq s'}}$.

Given a spectral sequence $(E_{r},d_{r},i_{r})_{r \in \NN}$, since each $E_{r+1}^{p,q}$ is a subquotient of $E^{p,q}_{r}$, for each $p, q \in \ZZ$ there exists a (unique) filtration of $k$-modules of $E^{p,q}_{1}$  
defined recursively
\begin{equation}
\label{eq:filt}
     0 = B^{p,q}_{1} \subseteq \dots \subseteq  B^{p,q}_{r} \subseteq B^{p,q}_{r+1} \subseteq \dots \subseteq Z^{p,q}_{r+1} \subseteq Z^{p,q}_{r} \subseteq \dots \subseteq Z^{p,q}_{1} = E^{p,q}_{1}     
\end{equation}
such that $i^{p,q}_{r}$ induces an isomorphism $j^{p, q}_{r} : Z^{p,q}_{r+1}/B^{p,q}_{r+1} \rightarrow E^{p,q}_{r+1}$ for all $r \in \NN$. 
Indeed, we set $j^{p, q}_{0}$ to be the identity morphism, and suppose moreover we have defined $B^{p,q}_{s}$ and $Z^{p,q}_{s}$ for $1 \leq s \leq r$ forming a nested chain as before together with a fixed isomorphism $j^{p, q}_{s} : Z^{p,q}_{s+1}/B^{p,q}_{s+1} \rightarrow E^{p,q}_{s+1}$ for $1 \leq s \leq r-1$. 
By taking inverse image under $j^{p,q}_{r-1}$ of the kernel of $d^{p,q}_{r}$ and the image of $d^{p-r,q+r-1}_{r}$ together with the previous isomorphism we get two (uniquely defined) submodules $Z^{p,q}_{r+1}$ and $B^{p,q}_{r+1}$ of $Z^{p,q}_{r}$ containing $B^{p,q}_{r}$, respectively. 
In particular, $j^{p,q}_{r-1}$ induces an isomorphism 
\[     Z^{p,q}_{r+1}/B^{p,q}_{r+1} \simeq \mathrm{Ker}(d^{p,q}_{r})/\mathrm{Im}(d^{p-r,q+r-1}_{r}),     \]
which together with $i^{p,q}_{r}$ yields the isomorphism $j^{p,q}_{r} : Z^{p,q}_{r+1}/B^{p,q}_{r+1} \rightarrow E^{p,q}_{r+1}$. 
We may form the bigraded submodules $Z_{r} = \oplus_{p, q \in \ZZ} Z^{p,q}_{r}$ and $B_{r} = \oplus_{p, q \in \ZZ} B^{p,q}_{r}$ of $E_{1}$ and the isomorphism $j_{r} : Z_{r+1}/B_{r+1} \rightarrow E_{r+1}$ whose components are the maps $j^{p,q}_{r}$. 
Consider the following $k$-modules
\[     Z^{p,q}_{\infty} = \bigcap_{r \in \NN} Z^{p,q}_{r}, \hskip 1cm B^{p,q}_{\infty} = \bigcup_{r \in \NN} B^{p,q}_{r},     \]
and the quotient $E^{p,q}_{\infty} = Z^{p,q}_{\infty}/B^{p,q}_{\infty}$. 
As before, this gives bigraded submodules $Z_{\infty}$ and $B_{\infty}$ of $E_{1}$ and $E_{\infty} = Z_{\infty}/B_{\infty}$. 

A \emph{filtered graded module} will be a graded $k$-module  $H = \oplus_{n \in \ZZ} H^{n}$ provided with a (decreasing) filtration $\{ F^{p}H \}_{p \in \ZZ}$ of graded modules, 
\textit{i.e.} $F^{p}H = \oplus_{n \in \ZZ} F^{p}H^{n}$ is a graded module whose $n$-th homogeneous component is $F^{p}H^{n}$, and $F^{p}H \supseteq F^{p+1}H$, for all $p \in \ZZ$. 
We say that the filtration of $H$ is \emph{exhaustive} if $\cup_{p \in \ZZ} F^{p}H = H$, 
and that it is \emph{Hausdorff} if $\cap_{p \in \ZZ} F^{p}H = \{ 0 \}$. 
Forgetting about the grading restriction on the members of the filtration, we see that the notion of a (Hausdorff) filtered module $H$ coincides with that of a (Hausdorff) separable topological $k$-module  
(\textit{i.e.} a $k$-module provided with a topology such that the sum $H \times H \rightarrow H$ and the multiplication maps $k \times H \rightarrow H$ are continuous), 
where $k$ is regarded as a topological ring with the discrete topology (see \cite{Bour2}, Chap. III, \S 2, n$^{0}$ 5). 
Indeed, the filtration is just a base of a system of neighbourhoods of zero. 

A filtered graded module $H$ is said to be \emph{complete} if the canonical map (in the category of $k$-modules)
\begin{equation}
\label{eq:compl}
     H \rightarrow \underset{\leftarrow q}{\lim} \hskip 0.6mm H/F^{q}H     
\end{equation}
is an isomorphism. 
We remark that the previous inverse limit is computed in the category of $k$-modules, not of graded ones. 
Moreover, the codomain object of the previous morphism is called the \emph{completion} of $H$ 
with respect to the filtration $\{ F^{p}H \}_{p \in \ZZ}$, and it is usually denoted by $\hat{H}$. 
We recall that the completion $\hat{H}$ may be provided with the canonical decreasing filtration $\{ F^{p}\hat{H} \}_{p \in \ZZ}$ of $k$-modules given by 
\begin{equation}
\label{eq:canfil}
     F^{p}\hat{H} = \underset{\leftarrow q}{\lim} \hskip 0.6mm F^{p}H/F^{q}H.     
\end{equation}
By regarding the filtration $\{ F^{p}H \}_{p \in \ZZ}$ as a family of neighbourhoods of zero for a linear topology on $H$, $\hat{H}$ is exactly the topological completion of $H$, so the name is justified.  
Note however that the previous completion $\hat{H}$ will not be in general a graded module over $k$. 
Since we will need to consider in the sequel the graded versions of the previous definitions, we give them quickly. 
A filtered graded module $H$ is said to be \emph{graded complete} if the canonical map (in the category of graded $k$-modules)
\begin{equation}
\label{eq:complgr}
     H \rightarrow \underset{\leftarrow q}{\lim}^{\mathrm{gr}} \hskip 0.6mm H/F^{q}H     
\end{equation}
is an isomorphism, where we stress that the latter inverse limit is computed in the category of graded $k$-modules. 
As before, the codomain object of the previous morphism is called the \emph{graded completion} of $H$ 
with respect to the filtration $\{ F^{p}H \}_{p \in \ZZ}$, and it is usually denoted by $\hat{H}^{\mathrm{gr}}$. 
Furthermore, the graded completion $\hat{H}^{\mathrm{gr}}$ can also be provided with a canonical decreasing filtration $\{ F^{p}\hat{H}^{\mathrm{gr}} \}_{p \in \ZZ}$ of graded $k$-modules given by 
\begin{equation}
\label{eq:canfilgr}
     F^{p}\hat{H}^{\mathrm{gr}} = \underset{\leftarrow q}{\lim}^{\mathrm{gr}} \hskip 0.6mm F^{p}H/F^{q}H.     
\end{equation}
Notice that the kernels of the canonical maps \eqref{eq:compl} and \eqref{eq:complgr} coincide and it is given by $\cap_{p \in \ZZ} F^{p}H$. 
Thus, if the filtration is either complete or graded complete, then it is Hausdorff. 
If $k$ is a field, then, as a graded $k$-module, the graded completion $\hat{H}^{\mathrm{gr}}$ of $H$ can be identified with the product in the category of graded $k$-modules given by
\[     \prod_{p \in \ZZ} \hskip -0.8mm {}^{\mathrm{gr}} \hskip 0.8mm F^{p}H/F^{p+1}H.     \]

The \emph{associated graded object} to a filtered graded $k$-module $H$ is given by the direct sum 
\[     \bigoplus_{p \in \ZZ} F^{p}H/F^{p+1}H     \]
of graded modules, and is denoted by $\mathrm{Gr}_{F^{\bullet}H}(H)$. 
We see that in fact is a bigraded object, where the homogeneous component of degree $(p,q) \in \ZZ$ is given by $F^{p}H^{p+q}/F^{p+1}H^{p+q}$. 
The total degree of this bigraded module coincides with the degree induced by that of $H$. 

We say that a spectral sequence $(E_{r},d_{r},i_{r})_{r \in \NN}$ \emph{weakly converges} to a filtered graded module $H$ if there exists an isomorphism of bigraded modules 
$\beta : E_{\infty} \rightarrow \mathrm{Gr}_{F^{\bullet}H}(H)$. 
In other words, for all $p, q \in \ZZ$ there exists an isomorphism $\beta^{p,q} : E^{p,q}_{\infty} \rightarrow F^{p}H^{p+q}/F^{p+1}H^{p+q}$ of $k$-modules. 
If furthermore the filtration of $H$ is exhaustive and Hausdorff, one says that 
the spectral sequence \emph{approaches $H$} (or \emph{abuts to $H$}). 
Finally, the spectral sequence \emph{converges to $H$} if it approaches it, it is regular and the filtration of $H$ is complete.     

As explained by Massey in \cite{Ma52} and \cite{Ma53}, a typical manner to produce a spectral sequence is from an \emph{exact couple}. 
Even though exact couple need not lie in the category of bigraded objects, for they can be considered in more general contexts, we shall only recall the definition in the situation 
which fits our specific setting of spectral sequences. 
Moreover, we shall allow more general bigradings than those considered in the standard references of the literature \cite{McC01}, Chapter 2, Section 2, pp. 37--42, or \cite{W}, Chapter 5, Section 9. 
Given $r, s \in \NN_{0}$, an \emph{exact couple of bigraded $k$-modules of $(r,s)$-th type} is given by the data $(E,D,\mathrm{i},\mathrm{j},\mathrm{k})$ where $E = \oplus_{p,q \in \ZZ} E^{p,q}$ 
and $D =  \oplus_{p,q \in \ZZ} D^{p,q}$ are bigraded modules over $k$, $\mathrm{i} : D \rightarrow D$ is a homogeneous morphism of bidegree $(-1,1)$, $\mathrm{j} : D \rightarrow E$ is a morphism of bidegree 
$(r,-r)$ and $\mathrm{k} : E \rightarrow D$ has bidegree $(1+s,-s)$ such that $\mathrm{Im}(\mathrm{k}) = \Ker(\mathrm{i})$, $\mathrm{Im}(\mathrm{j}) = \Ker(\mathrm{k})$ and $\mathrm{Im}(\mathrm{i}) = \Ker(\mathrm{j})$. 
An exact couple of bigraded $k$-modules of $(r,0)$-th type will be called an \emph{exact couple of bigraded $k$-modules of $r$-th type}. 
These are the ``usual'' exact couples of the literature. 
An exact couple is usually depicted as a triangle 
\[
\xymatrix
{
D 
\ar[rr]^{\mathrm{i}}
&
&
D
\ar[dl]^{\mathrm{j}}
\\
&
E
\ar[ul]^{\mathrm{k}}
&
}
\]
which is exact at each vertex. 
Note that the morphism $d = \mathrm{j} \circ \mathrm{k}$ is a differential, \textit{i.e.} it satisfies that $d \circ d = 0$. 
If $(E,D,\mathrm{i},\mathrm{j},\mathrm{k})$ is an exact couple of $(r,s)$-th type, one may produce an exact couple $(E',D',\mathrm{i}',\mathrm{j}',\mathrm{k}')$ of $(r+1,s)$-th type, 
which is called the \emph{derived exact couple}, as follows. 
Set $E' = \Ker(d)/\mathrm{Im}(d)$, $D' = \mathrm{Im}(\mathrm{i})$, $\mathrm{i}'$ to be the restriction of $\mathrm{i}$ to $D'$, $\mathrm{j}'$ given by the formula $\mathrm{j}'(i(x)) = [j(x)]$, for $x \in D$ and where the brackets denote the cohomology class, and $\mathrm{k}'$ to be the canonical maps induced by $\mathrm{k}$. 
Inductively, for $n \in \NN$, take $(E^{(n)},D^{(n)},\mathrm{i}^{(n)},\mathrm{j}^{(n)},\mathrm{k}^{(n)})$ to be the derived couple of $(E^{(n-1)},D^{(n-1)},\mathrm{i}^{(n-1)},\mathrm{j}^{(n-1)},\mathrm{k}^{(n-1)})$, and 
$(E^{(0)},D^{(0)},\mathrm{i}^{(0)},\mathrm{j}^{(0)},\mathrm{k}^{(0)}) = (E,D,\mathrm{i},\mathrm{j},\mathrm{k})$.  
Now, given an exact couple of $(r,s)$-th type $(E,D,\mathrm{i},\mathrm{j},\mathrm{k})$ it defines an spectral sequence $(E_{r'},d_{r'}, i_{r'})_{r' \in \NN_{> (r+s)}}$ starting at $(r+s+1)$ 
by setting $E_{r'} = E^{(r'-r-s-1)}$, $d_{r'} = \mathrm{j}^{(r'-r-s-1)} \circ \mathrm{k}^{(r'-r-s-1)}$, and $i_{r'}$ is the identity map. 
It will be called the \emph{spectral sequence (starting at $(r+s+1)$) associated to the exact couple (of $(r,s)$-th type)}. 
One of the main examples of exact couples are obtained from filtrations of dg modules over $k$ as we now recall. 
Let $(M,d_{M})$ be a dg module over $k$ provided with a decreasing filtration $\{ F^{p}M \}_{p \in \ZZ}$ of the underlying graded module of $M$ such that $d_{M}(F^{p} M) \subseteq F^{p}M$, for all $p \in \ZZ$. 
For $s \in \NN_{0}$, we define the dg $k$-modules 
\begin{equation}
\label{eq:reescomplex}
     {}^{s}\mathcal{D} = \bigoplus_{p \in \ZZ} \Big(\big(F^{p}M \cap d_{M}^{-1}(F^{p+s}M)\big) \otimes k.\hbar^{-p}\Big)  \subseteq M \otimes k[\hbar^{\pm 1}],     
\end{equation}
provided with the differential given by (the restriction of) $d_{M} \otimes \mathrm{id}_{k[\hbar^{\pm 1}]} \hbar^{-s}$, and 
\begin{equation}
\label{eq:gr}  
   {}^{s}\mathcal{E} = \bigoplus_{p \in \ZZ} \Big( \frac{F^{p}M \cap d_{M}^{-1}(F^{p+s}M)}{F^{p+1}M \cap d_{M}^{-1}(F^{p+1+s}M)} \Big),     
\end{equation} 
with the induced differential by $d_{M}$. 
It sends the class $z + (F^{p+1}M \cap d_{M}^{-1}(F^{p+1+s}M))$ to $d_{M}(z) + (F^{p+s+1}M \cap d_{M}^{-1}(F^{p+2 s+1}M))$, 
for $z \in F^{p}M \cap d_{M}^{-1}(F^{p+s}M)$. 
Moreover, these dg modules are bigraded by setting 
\[     {}^{s}\mathcal{D}^{p,q} = (F^{p}M^{p+q} \cap d_{M}^{-1}(F^{p+s}M^{p+q+1})) \otimes k.\hbar^{-p}     \] 
and 
\[     {}^{s}\mathcal{E}^{p,q} = (F^{p}M^{p+q} \cap d_{M}^{-1}(F^{p+s}M^{p+q+1}))/(F^{p+1}M^{p+q} \cap d_{M}^{-1}(F^{p+1+s}M^{p+q+1})).     \] 
Consider now the short exact sequence of complexes of modules over $k$ of the form 
\begin{equation}
\label{eq:seqfil}
     0 \rightarrow {}^{s}\mathcal{D} \overset{{}^{s}\tilde{i}}{\rightarrow} {}^{s}\mathcal{D} \overset{{}^{s}\tilde{j}}{\rightarrow} {}^{s}\mathcal{E} \rightarrow 0,     
\end{equation}
where ${}^{s}\tilde{i}$ is the morphism given by multiplication by $\hbar$, and ${}^{s}\tilde{j}$ is the canonical projection. 
Note that ${}^{s}\tilde{i}$ has bidegree $(-1,1)$ and ${}^{s}\tilde{j}$ has bidegree $(0,0)$
By considering the long exact sequence of cohomology groups of the previous short exact sequence, which may be rearranged as a triangle 
\[
\xymatrix
{
H^{\bullet}({}^{s}\mathcal{D}) 
\ar[rr]^{H^{\bullet}({}^{s}\tilde{i})}
&
&
H^{\bullet}({}^{s}\mathcal{D})
\ar[dl]^{H^{\bullet}({}^{s}\tilde{j})}
\\
&
H^{\bullet}({}^{s}\mathcal{E})
\ar[ul]^{{}^{s}\delta^{\bullet}}
&
}
\]
we get an exact couple of $(0,s)$-th type, where we recall that ${}^{s}\delta^{\bullet}$ is the delta morphism obtained by means of the Snake lemma (see \cite{W}, Thm. 1.3.1, Example 5.9.3). 
It will be called the \emph{exact couple of $(0,s)$-th type associated to the filtration of the complex}, and the corresponding spectral sequence starting at $(s+1)$ will be called 
the \emph{spectral sequence starting at $(s+1)$ associated to the filtration of the complex}. 
By a standard construction on spectral sequences coming from filtrations, one sees that for any $s , s' \in \NN_{0}$, the previously constructed spectral sequence starting at $(s+1)$ 
is compatible with the one starting at $(s'+1)$ (see \cite{W}, Chapter 5, Section 4). 

From the previously observed property, let us consider a collection of short exact sequences 
\[     0 \rightarrow {}^{s}A \overset{{}^{s}i}{\rightarrow} {}^{s}B \overset{{}^{s}p}{\rightarrow} {}^{s}C \rightarrow 0     \] 
of complexes of $k$-modules indexed by $s \in \NN_{0}$, where each complex ${}^{s}A$, ${}^{s}B$ and ${}^{s}C$ is in fact the total complex of a bigraded $k$-module, 
with differentials of bidegree $(s,1-s)$, and where ${}^{s}i$ and ${}^{s}p$ are homogeneous morphism of bidegree $(-1,1)$ and $(0,0)$, respectively. 
For each $s \in \NN_{0}$, the $s$-th complex defines an exact couple of $(0,s)$ type by following the procedure recalled in the previous paragraph by taking cohomology. 
We say that such a collection of complexes defines a \emph{compatible spectral sequence}, if the members of the family of spectral sequences starting at $(s+1)$, associated 
to the exact couples referred before, are compatible with each other. 
The main example of collection of complexes defining a compatible spectral sequence is the given by the short exact sequences \eqref{eq:seqfil}, coming from a filtration of a complex. 

As we stated before the following definitions are standard (\textit{cf.} \cite{McC01}, Section 2.3, or \cite{W}, 5.4.8). 
A \emph{multiplicative (cohomological) spectral sequence} (sometimes called a \emph{spectral sequence of algebras}) is a (cohomological) spectral sequence 
$(E_{r},d_{r}, i_{r})_{r \in \NN}$, provided with a collection of maps $(\mu_{r})_{r \in \NN}$ where  
$\mu_{r} : E_{r} \otimes E_{r} \rightarrow E_{r}$ is a bigraded associative product on $E_{r}$ (\textit{i.e.} $\mu_{r}$ is associative and 
$\mu_{r}(E^{p,q}_{r} \otimes E^{p',q'}_{r}) \subseteq E^{p+p',q+q'}_{r}$, for all $p, p', q, q' \in \ZZ$), making $(E_{r},d_{r})$ into a dg algebra (for the grading given by the total degree), 
\textit{i.e.} such that $d_{r} \circ \mu_{r} = \mu_{r} \circ (\mathrm{id}_{E_{r}} \otimes d_{r} + d_{r} \otimes \mathrm{id}_{E_{r}})$, where we use the Koszul sign rule for the total degree, 
and $i_{r}$ is an isomorphism of graded algebras, \textit{i.e.} $\mu_{r+1}$ is given by the composition 
\begin{multline*}
     E_{r+1} \otimes E_{r+1} \overset{i_{r}^{-1} \otimes i_{r}^{-1}}{\longrightarrow} H(E_{r},d_{r}) \otimes H(E_{r},d_{r}) \longrightarrow 
     \\ 
     H(E_{r} \otimes E_{r}, \mathrm{id}_{E_{r}} \otimes d_{r} + d_{r} \otimes \mathrm{id}_{E_{r}}) \overset{H(\mu_{r})}{\longrightarrow} H(E_{r},d_{r}) \overset{i_{r}}{\longrightarrow} E_{r+1}     
\end{multline*}
where the unnamed middle map is the canonical morphism given by $[e] \otimes [e'] \rightarrow [e \otimes e']$, where $e, e' \in \mathrm{Ker}(d_{r})$ and the brackets denote the corresponding cohomology classes. 
It is easy to see that in this situation the cocycle submodules $Z_{r}$ appearing in the sequence \eqref{eq:filt} are in fact bigraded subalgebras of $E_{1}$, each $B_{r}$ is an ideal of $Z_{r}$, and the map 
$j_{r} : Z_{r+1}/B_{r+1} \rightarrow E_{r+1}$ is an isomorphism of bigraded algebras. 
As a direct consequence, we see that $Z_{\infty}$ is a bigraded subalgebra of $E_{1}$ with ideal $B_{\infty}$, so $E_{\infty}$ has a canonical structure of bigraded algebra. 
For convenience, unless explicitly stated it will be usually considered to be bigraded by the indices $p$ and $q$, or as graded module by the total degree $p+q$. 
A \emph{morphism} between two multiplicative spectral sequences is a morphism$\{ f_{r} \}_{r \in \NN}$ between the underlying spectral sequences such that $f_{r}$ is also a morphism of algebras. 
The notion of multiplicative spectral sequences starting at $s$, for $s \in \NN$, together with their morphisms, and the compatibility of collections of multiplicative spectral sequences, 
are defined analogously as in the case of plain spectral sequences.

Suppose we are given a \emph{filtered graded algebra} $H$, \textit{i.e.} $H$ is a filtered graded module with a graded algebra structure such that $F^{p}H \cdot F^{q}H \subseteq F^{p+q}H$, for all $p, q \in \ZZ$ 
(and $1_{H} \in F^{0}H$ if $H$ is unitary). 
We see that $\mathrm{Gr}_{F^{\bullet}H}(H)$ is a bigraded algebra. 
In this case, we say that the multiplicative spectral sequence \emph{weakly converges} to $H$ if there exists an isomorphism of bigraded algebras $\beta : E_{\infty} \rightarrow \mathrm{Gr}_{F^{\bullet}H}(H)$. 
The definitions when the multiplicative spectral sequence \emph{approaches} or \emph{converges} to a filtered graded algebra are as previously, 
but always considering that the map $\beta$ is an isomorphism of bigraded algebras. 

A typical manner to produce a multiplicative spectral sequence is when we consider a \emph{filtered dg algebra}. 
We recall that filtered dg algebras is a dg algebra $(A,d_{A})$ provided with a (decreasing) filtration $\{ F^{p}A \}_{p \in \ZZ}$ 
of the underlying graded module of $A$ satisfying the compatibility conditions 
\begin{equation}
\label{eq:compfiltdg}
     d_{A}(F^{p}A) \subseteq F^{p}A, \hskip 1cm \textit{ and } \hskip 1cm F^{q}A \cdot F^{q'}A \subseteq F^{q+q'}A,     
\end{equation}
for all $p, q, q' \in \ZZ$. 
If $A$ has a unit we further assume that $1_{A} \in F^{0}A$. 
In this case, it is easy to verify that the spectral sequence associated to the filtration of the underlying complex of $A$ is indeed multiplicative. 
Furthermore, the collection of multiplicative spectral sequences starting at $(s+1)$, for $s \in \NN_{0}$, given by the short exact sequences \eqref{eq:seqfil}, coming from a filtration of a dg algebra is compatible 
(see \cite{W}, Chapter 5, Section 4). 
The theory of multiplicative exact couples was developed in \cite{Ma54} but it does not seem to this author to be completely straightforward (paraphrasing what D. Benson states in his book \cite{Ben}, Ch. 3, Section 9) and in some sense satisfactory as one may hope. 

\subsection{\texorpdfstring{$A_{\infty}$-algebras and Kadeishvili's theorem}{A-infinity-algebras and Kadeishvili's theorem}} 
\label{subsection:kad}

The notion of $A_{\infty}$-algebra was introduced by J. Stasheff in \cite{Sta} in his study of homotopy theory of loop spaces. 
We refer the reader to \cite{Prou}, Chapitre 3, or also \cite{LH}, Chapitre 1, for standard references. 
As stated previously, our particular sign and grading conventions are explained in detail in \cite{Herrec}, Sections 2 and 5. 
Though they do not coincide with the aforementioned references, they agree with several others in the literature (see \cites{LPWZ04, LPWZ09} and references therein). 

An \emph{$A_{\infty}$-algebra} structure on a cohomological graded $k$-module $A$ is a collection of maps 
$m_{i} : A^{\otimes i} \rightarrow A$ for $i \in \NN$ of cohomological degree $2-i$ satisfying the \emph{Stasheff identities} $\mathrm{SI}(n)$ given by   
\begin{equation}
\label{eq:ainftyalgebra}
   \sum_{(r,s,t) \in \mathcal{I}_{n}} (-1)^{r + s t}  m_{r + 1 + t} \circ (\mathrm{id}_{A}^{\otimes r} \otimes m_{s} \otimes \mathrm{id}_{A}^{\otimes t}) = 0,
\end{equation} 
for $n \in \NN$, where $\mathcal{I}_{n} = \{ (r,s,t) \in \NN_{0} \times \NN \times \NN_{0} : r + s + t = n \}$. 
Given $N \in \NN$, if $A$ is provided only with the morphisms $m_{i}$ for $i \in \NN_{\leq N}$ and satisfy the Stasheff identities $\mathrm{SI}(n)$ for $n \in \NN_{\leq N}$, we say that it is an \emph{$A_{N}$-algebra}.
Note that the first Stasheff identity $\mathrm{SI}(1)$ means that $m_{1}$ is a differential of $A$, 
so we may consider the cohomology $k$-module $H^{\bullet}(A)$ given by the quotient $\mathrm{Ker}(m_{1})/\mathrm{Im}(m_{1})$. 
Following \cite{Lun}, we will say that $A$ is \emph{flat} if $H^{\bullet}(A)$ is a (graded) projective $k$-module. 

Let $N \in \NN \cup \{ \infty \}$, and let $A$ be an $A_{N}$-algebra. 
If $N = \infty$, we define $\NN_{\leq \infty} = \NN$. 
There exists a (not necessarily counitary) dg coalgebra $B_{N}(A)$, called the \emph{bar construction of $A$}. 
If $N = \infty$, it is usually denoted just by $B(A)$.  
Its underlying graded coalgebra is given by the truncated tensor coalgebra $\oplus_{i \in \NN_{\leq N}} A[1]^{\otimes i}$, where $A[1]$ denotes the shift of $A$. 
As usual, if $n \in \NN_{\leq N}$ we will typically denote an element $s_{A}(a_{1}) \otimes \dots \otimes s_{A}(a_{n}) \in A[1]^{\otimes n}$ in the form $[a_{1} | \dots | a_{n}]$, 
where $a_{1}, \dots, a_{n} \in A$, and $s_{A} : A \rightarrow A[1]$ is the canonical morphism of degree $- 1$ whose underlying map of $k$-modules is the identity.  
The coproduct is given by the usual deconcatenation 
\[     \Delta ([a_{1}|\dots |a_{n}]) = \sum_{i=1}^{n-1} [a_{1}|\dots | a_{i}] \otimes [a_{i+1} | \dots | a_{n}].     \]
Since $B_{N}(A)$ is a truncated tensor graded coalgebra, its differential $B_{N}$ can be defined as follows. 
It is the unique coderivation determined by $\pi_{1} \circ B_{N}$, where $\pi_{1} : B_{N}(A) \rightarrow A[1]$ is the canonical projection 
(\textit{cf.} \cite{LH}, Lemme 1.1.2.2, and see \cite{LH}, Section 1.2.2, pp. 29--30), such that this composition map is given by the sum $b = \sum_{i \in \NN_{\leq N}} b_{i}$, 
where $b_{i} : A[1]^{\otimes i} \rightarrow A[1]$ is defined as $b_{i} = - s_{A} \circ m_{i} \circ (s_{A}^{\otimes i})^{-1}$. 
In fact, equation \eqref{eq:ainftyalgebra} is precisely the condition for this coderivation to be a differential (\textit{cf.} \cite{LH}, Lemme 1.2.2.1, and see \cite{LH}, Section 1.2.2, pp. 29--30).  

An $A_{\infty}$-algebra is called \emph{(strictly) unitary} if there is a map $\eta_{A} : k \rightarrow A$ of complete degree zero such that $m_{i} \circ (\mathrm{id}_{A}^{\otimes r} \otimes \eta_{A} \otimes \mathrm{id}_{A}^{\otimes t})$
vanishes for all $i \neq 2$ and all $r, t \geq 0$ such that $r+1+t = i$. 
We shall usually denote the image of $1_{k}$ under $\eta_{A}$ by $1_{A}$, and call it the \emph{(strict) unit} of $A$. 
Clearly, the flatness property may also be stated for strictly unitary $A_{\infty}$-algebras.  
We say that a unitary or nonunitary $A_{\infty}$-algebra is called \emph{minimal} if $m_{1}$ vanishes. 
Note that all the previous definitions can be applied as well to $A_{N}$-algebras, for $N \in \NN$. 
We see that a (unitary) dg algebra $(A,d_{A},\mu_{A})$ is a particular case of (unitary) $A_{\infty}$-algebra, where $m_{1} = d_{A}$ is the differential and $m_{2} = \mu_{A}$ is the product. 

Given $s \in \NN_{0}$, we say that an $A_{\infty}$-algebra $A$ has a \emph{compatible bigrading of $s$-th type} if $A$ is provided with a bigrading $A = \oplus_{p, q  \in \ZZ} A^{p,q}$ such that 
$A^{n} = \oplus_{p \in \ZZ} A^{p,n-p}$, and $m_{n}$ is a homogeneous morphism of bidegree $(2-n) (s,-s+1)$, for all $n \in \NN$. 
If $A$ is unitary we further assume that $1_{A} \in A^{0,0}$. 
Note that if $A$ has a compatible bigrading (of $s$-th type), then the cohomology $k$-module $H^{\bullet}(A)$ is provided with a canonically induced bigrading. 

A \emph{morphism of $A_{\infty}$-algebras} $f_{\bullet} : A \rightarrow B$ between two $A_{\infty}$-algebras $A$ and $B$ is a collection of morphisms 
of the underlying graded $k$-modules $f_{i} : A^{\otimes n} \rightarrow B$ of cohomological degree $1-i$ for $i \in \NN$ 
satisfying the \emph{Stasheff identities on morphisms} $\mathrm{MI}(n)$ given by  
\begin{equation}
\label{eq:ainftyalgebramor}
   \sum_{(r,s,t) \in \mathcal{I}_{n}} (-1)^{r + s t}  f_{r + 1 + t} \circ (\mathrm{id}_{A}^{\otimes r} \otimes m_{s}^{A} \otimes \mathrm{id}_{A}^{\otimes t}) 
   = \sum_{q \in \NN} \sum_{\bar{i} \in \NN^{q, n}} (-1)^{w} m_{q}^{B} \circ (f_{i_{1}} \otimes \dots \otimes f_{i_{q}}),
\end{equation} 
for $n \in \NN$, where $w = \sum_{j=1}^{q} (q-j) (i_{j} - 1)$ and $\NN^{q,n}$ is the subset of $\NN^{q}$ of elements $\bar{i} = (i_{1},\dots,i_{q})$ such that $|\bar{i}| = i_{1} + \dots + i_{q} = n$. 
Given $N \in \NN$, if $A$ and $B$ are only $A_{N}$-algebras, a \emph{morphism of $A_{N}$-algebras} $f_{\bullet} : A \rightarrow B$ is a collection of morphisms 
of the underlying graded $k$-modules $f_{i} : A^{\otimes n} \rightarrow B$ of cohomological degree $1-i$ for $i \in \NN_{\leq N}$ satisfying the previous identities $\mathrm{MI}(n)$ for $n \in \NN_{\leq N}$.  
If $A$ and $B$ are unitary $A_{\infty}$-algebras, the morphism $f_{\bullet}$ is called \emph{(strictly) unitary} if $f_{1}(1_{A}) = 1_{B}$, and for all $i \geq 2$ we have that $f_{i}(a_{1}, \dots, a_{i})$ vanishes 
if there exists $j \in \{1, \dots, i \}$ such that $a_{j} = 1_{A}$. 
Notice that $f_{1}$ is a morphism of dg $k$-modules for the underlying structures on $A$ and $B$. 
We say that a morphism of (resp., unitary) $A_{\infty}$-algebras $f_{\bullet} : A \rightarrow B$ is a \emph{quasi-isomorphism} if $f_{1}$ is a quasi-isomorphism of the underlying complexes. 
We say that a morphism $f_{\bullet}$ is \emph{strict} if $f_{i}$ vanishes for $i \geq 2$. 
Note that all these definitions also apply to morphisms of $A_{N}$-algebras. 

Given two (resp., unitary) $A_{\infty}$-algebras $A$ and $B$ provided with compatibles bigradings of $s$-th type, for some $s \in \NN_{0}$, and a morphism $f_{\bullet} : A \rightarrow B$ of (resp., unitary) $A_{\infty}$-algebras, 
we say that $f$ is \emph{compatible with the bigradings} if $f_{n}$ is a homogeneous morphism of bidegree $(1-n) (s,-s+1)$, for all $n \in \NN$.  

Let $N \in \NN \cup \{ \infty \}$, and let $A$ and $A'$ be two $A_{N}$-algebras. 
Given $f_{\bullet} : A \rightarrow A'$ a morphism of $A_{N}$-algebras, it induces a morphism of dg coalgebras $B_{N}(f_{\bullet}) : B_{N}(A) \rightarrow B_{N}(A')$ 
between the bar constructions as follows. 
Taking into account that $B_{N}(A')$ is a truncated tensor coalgebra, such a morphism of graded coalgebras $F_{N}$ is completely determined by the composition 
$\pi'_{1} \circ B_{N}(f_{\bullet}) : B_{N}(A) \rightarrow A'[1]$, 
where $\pi'_{1} : B_{N}(A') \rightarrow A'[1]$ denotes the canonical projection. 
The latter composition is given by a sum $\sum_{i \in \NN_{\leq N}} F_{N,i}$, where $F_{N,i} : A[1]^{\otimes i} \rightarrow A'[1]$, which we define to be $F_{N,i} = s_{A'} \circ f_{i} \circ (s_{A}^{\otimes i})^{-1}$, for 
$i \in \NN_{\leq N}$. 
In fact, \eqref{eq:ainftyalgebramor} is precisely the condition for this morphism to commute with the differentials (see \cite{LH}, Section 1.2.2). 

We state the following result which should be well-known among the experts, though it does not seem to be very present in the available literature. 
It is a key piece in the proof of Kadeishvili's theorem, which we later recall (in the articles \cites{K80, K82} by Kadeishvili, the author just states it is a ``direct calculation''). 
We give the proof for completeness and for it will be our guide for the forthcoming proof in the context of deformation theory. 
\begin{lemma}
\label{lemma:ind}
Let $N \in \NN$, $(A,\{ m_{i} \}_{i \in \NN_{\leq N+1}})$ be an $A_{N+1}$-algebra and $(A',\{ m'_{i} \}_{i \in \NN_{\leq N}})$ an $A_{N}$-algebra satisfying that $m'_{1} = 0$. 
Suppose we have a morphism $f_{\bullet} : A' \rightarrow A$ of $A_{N}$-algebras from $A'$ to the underlying $A_{N}$-algebra of $A$ such that $f_{1}$ induces an injective map between the cohomology groups. 
Define the map from $(A')^{\otimes (N+1)}$ to $A$ given by 
\begin{equation*}
\begin{split}
   U_{N+1} = &\sum_{(r,s,t) \in \mathcal{I}_{N+1}^{*}} (-1)^{r + s t}  f_{r + 1 + t} \circ (\mathrm{id}_{A'}^{\otimes r} \otimes m'_{s} \otimes \mathrm{id}_{A'}^{\otimes t}) 
   \\
   &- \sum_{q \in \NN_{\geq 2}} \sum_{\bar{i} \in \NN^{q, N+1}} (-1)^{w} m_{q} \circ (f_{i_{1}} \otimes \dots \otimes f_{i_{q}}),
\end{split}
\end{equation*} 
where $\mathcal{I}_{m}^{*} = \{ (r,s,t) \in \NN_{0} \times \NN_{\geq 2} \times \NN_{0} : r + s + t = m,  0 < r + t <  m - 1 \}$, $w = \sum_{j=1}^{q} (q-j) (i_{j} - 1)$ and $\NN^{q,m}$ is the subset of $\NN^{q}$ of elements $\bar{i} = (i_{1},\dots,i_{q})$ such that $|\bar{i}| = i_{1} + \dots + i_{q} = m$. 
Then $m_{1} \circ U_{N+1}$ vanishes. 
\end{lemma}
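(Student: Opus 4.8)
The plan is to compute $m_{1} \circ U_{N+1}$ explicitly, show that all but one family of terms cancel, identify the survivor as $f_{1}$ applied to the (a priori unavailable) $(N+1)$-st Stasheff expression of $A'$, and then kill that survivor using the injectivity hypothesis. The three inputs are the identities $\mathrm{SI}(n)$ for $A$ (valid for $n \leq N+1$), the identities $\mathrm{SI}(n)$ for $A'$ (valid for $n \leq N$), and the morphism identities $\mathrm{MI}(n)$ for $f_{\bullet}$ (valid for $n \leq N$).

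First I would rewrite the two sums defining $U_{N+1}$. In the first sum the outer map is $f_{r+1+t}$ with $2 \leq r+1+t = N+2-s \leq N$ (using $s \geq 2$ and $r+t \geq 1$), so $\mathrm{MI}(r+1+t)$ is available and may be used to replace $m_{1}\circ f_{r+1+t}$ by the remaining terms of that identity, noting that its $q=1$ term is exactly $m_{1}\circ f_{r+1+t}$ and that every term containing $m'_{1}$ vanishes. In the second sum one has $2 \leq q \leq N+1$, so $\mathrm{SI}(q)$ holds in $A$ and may be used to replace $m_{1}\circ m_{q}$ by the terms of $\mathrm{SI}(q)$ with $(r',s',t') \neq (0,q,0)$, all of whose inner maps $m_{s'}$ are defined since $A$ is an $A_{N+1}$-algebra. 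Every $m_{1}\circ f_{i_{j}}$ and $m_{1}\circ m_{q}$ produced along the way is expanded again by the same device (using $m_{1}\circ f_{1}=f_{1}\circ m'_{1}=0$ to discard the terms it generates), and the resulting expressions organize into terms carrying two nested inner multiplications $m'_{\bullet}$, terms of the shape $m_{\bullet}(f_{i_{1}}\otimes\dots\otimes f_{i_{q}})$ with one decorated slot, and terms $m_{\bullet}(\dots m_{s'}\dots)(f_{i_{1}}\otimes\dots)$; the last two families cancel pairwise after regrouping the $f$'s according to the inner $m_{s'}$, while the first family reassembles, for each fixed outer $f_{\ell}$, into $f_{\ell}$ applied to $\mathrm{SI}'(p)$ for various $p\leq N+1$. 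Since $\mathrm{SI}'(p)=0$ for $p\leq N$, the only possible survivor is the top one, carried by $f_{1}$, so that
\begin{equation*}
     m_{1}\circ U_{N+1} = \pm\, f_{1} \circ D', \qquad D' = \sum_{(r,s,t),\, r+s+t=N+1} (-1)^{r+st}\, m'_{r+1+t}\circ(\mathrm{id}_{A'}^{\otimes r}\otimes m'_{s}\otimes\mathrm{id}_{A'}^{\otimes t}).
\end{equation*}

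I expect the cleanest route to this intermediate identity, and the way to sidestep the sign bookkeeping that is the true technical obstacle, to be the bar construction. Writing $F = B_{N}(f_{\bullet})$ and letting $b$, $b'$ denote the codifferentials of $B(A)$ and $B(A')$ (extended by zero past their defined components), the telescoping identity $b\circ(b\circ F - F\circ b') + (b\circ F - F\circ b')\circ b' = b^{2}\circ F - F\circ (b')^{2}$, together with the fact that $\pi_{1}\circ b^{2}$ vanishes up to tensor length $N+1$ (as $A$ is an $A_{N+1}$-algebra, this being $\mathrm{SI}(n)$ for $n\leq N+1$) and that $\pi_{1}\circ(b\circ F - F\circ b')$ vanishes up to tensor length $N$ (the identities $\mathrm{MI}(n)$ for $n\leq N$), isolates on $A'[1]^{\otimes(N+1)}$ the single surviving contribution $b_{1}\circ\big(\pi_{1}(b\circ F - F\circ b')|_{N+1}\big) = - F_{1}\circ\big(\pi_{1}((b')^{2})|_{N+1}\big)$, which is precisely the suspended form of the displayed equality, $\pi_{1}((b')^{2})|_{N+1}$ being the bar incarnation of $D'$.

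Finally I would close the argument with the injectivity hypothesis, which is exactly where it is needed. Since $m'_{1}=0$, every element of $A'$ is a cocycle, so for each argument $\bar{x}$ the element $D'(\bar{x})\in A'$ is a cocycle and $f_{1}(D'(\bar{x}))$ is a cocycle of $A$ whose cohomology class is $H^{\bullet}(f_{1})(D'(\bar{x}))$. But the displayed identity exhibits $f_{1}(D'(\bar{x})) = \pm\, m_{1}(U_{N+1}(\bar{x}))$ as an $m_{1}$-coboundary, so its class vanishes; injectivity of $H^{\bullet}(f_{1})$ then forces $D'(\bar{x})=0$, whence $f_{1}\circ D' = 0$ and therefore $m_{1}\circ U_{N+1}=0$. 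Thus the obstacle is not conceptual but bookkeeping: the cancellation of the mixed families is delicate to track by hand, and running the computation through the coderivation identity above is what keeps it manageable, while the injectivity hypothesis is precisely what converts the computed value $\pm f_{1}\circ D'$ into zero.
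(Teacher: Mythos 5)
Your proposal is correct and follows essentially the same route as the paper's proof: both pass to the bar construction, use the coderivation and morphism identities together with $B_{N+1}\circ B_{N+1}=0$ for $A$ to reduce $m_{1}\circ U_{N+1}$ to $f_{1}$ applied to the $(N+1)$-st Stasheff expression of $A'$, and then invoke the injectivity of $H^{\bullet}(f_{1})$ together with $m'_{1}=0$ to conclude that this expression vanishes. The paper organizes the intermediate computation by explicitly writing $\tilde{U}_{N+1}$ as $\pi_{1}\circ(F_{N+1}\circ B'_{N+1}-B_{N+1}\circ F_{N+1})\circ\iota'_{N+1}$ plus correction terms rather than via your telescoping identity, but this is only a difference of presentation.
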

\begin{proof}
Choose any homogeneous maps $m_{N+1}' : (A')^{\otimes (N+1)} \rightarrow A'$ of degree $1- N$ and $f_{N+1} : (A')^{\otimes (N+1)} \rightarrow A$ of degree $-N$. 
By following the recipe of the bar construction, the maps $\{ m'_{i} \}_{i \in \NN_{\leq N+1}}$ induce a coderivation $B'_{N+1}$ on the tensor coalgebra $B_{N+1}(A')$. 
As $A'$ is not a $A_{N+1}$-algebra this is in fact an abuse of notation, for we should better write something like $\bar{T}^{c, \leq (N+1)}(A'[1])$, but we will allow our notation for the sake of simplicity, 
and because it is completely clear that we are not assuming that $B'_{N+1} \circ B'_{N+1}$ vanishes. 
However, since $A'$ is an $A_{N}$-algebra, $\pi'_{1} \circ B_{N+1} \circ B_{N+1} \circ \iota'_{p}$ vanishes for $p = 1, \dots, N$, 
where $\iota'_{p} : (A'[1])^{\otimes p} \rightarrow B_{N+1}(A')$ denotes the canonical inclusion, and $\pi'_{1} : B_{N+1}(A') \rightarrow A'[1]$ is the canonical projection. 
Accordingly, $\iota_{p} : (A[1])^{\otimes p} \rightarrow B_{N+1}(A)$ denotes the canonical inclusion, and $\pi_{1} : B_{N+1}(A) \rightarrow A[1]$ is the canonical projection. 
Moreover, by the previous explanations on the cobar construction of morphisms of $A_{N}$-algebras, the maps $\{ f_{i} \}_{i \in \NN_{\leq N+1}}$ induce a morphism of graded coalgebras $F_{N+1}$ 
from $B_{N+1}(A')$ to $B_{N+1}(A)$. 
Define $\tilde{U}_{N+1}$ given by $s_{A} \circ U_{N+1} \circ (s_{A'}^{\otimes (N+1)})^{-1}$. 
It is easy to show that 
\begin{equation*}
\begin{split}
   \tilde{U}_{N+1} = &\sum_{(r,s,t) \in \mathcal{I}_{N+1}^{*}} F_{N,r + 1 + t} \circ (\mathrm{id}_{A'}^{\otimes r} \otimes b'_{s} \otimes \mathrm{id}_{A'}^{\otimes t}) 
   \\
   &- \sum_{q \in \NN_{\geq 2}} \sum_{\bar{i} \in \NN^{q, N+1}} b_{q} \circ (F_{N,i_{1}} \otimes \dots \otimes F_{N,i_{q}}).
\end{split}
\end{equation*} 
It is clear that to prove that $m_{1} \circ U_{N+1}$ vanishes is tantamount to the vanishing of $b_{1} \circ \tilde{U}_{N+1}$. 
Furthermore, it is direct to see that 
\[     \tilde{U}_{N+1} = \pi_{1} \circ (F_{N+1} \circ B'_{N+1} - B_{N+1} \circ F_{N+1}) \circ \iota'_{N+1} - F_{N+1,1} \circ b'_{N+1} + b_{1} \circ F_{N+1,N+1}.     \]
Then 
\begin{align*}     
b_{1} \circ \tilde{U}_{N+1} &= b_{1} \circ \pi_{1} \circ (F_{N+1} \circ B'_{N+1} - B_{N+1} \circ F_{N+1}) \circ \iota'_{N+1}     
\\
&= B_{N+1} \circ (F_{N+1} \circ B'_{N+1} - B_{N+1} \circ F_{N+1}) \circ \iota'_{N+1}
\\
&= B_{N+1} \circ F_{N+1} \circ B'_{N+1}  \circ \iota'_{N+1},
\end{align*}
where we have used in the first equality that $b_{1} \circ b_{1} = 0$ and $b_{1} \circ F_{N+1,1} = 0$, for $m_{1} \circ m_{1} = 0$ and $m_{1} \circ f_{1} = 0$, 
and in the second equality that the image of $(F_{N+1} \circ B'_{N+1} - B_{N+1} \circ F_{N+1}) \circ \iota'_{N+1}$ is included in the image of $\iota_{1}$, 
by the hypothesis that $\{ f_{i} \}_{i \in \NN_{\leq N}}$ is a morphism of $A_{N}$-algebras. 
In the last equality we have used that $B_{N+1} \circ B_{N+1} = 0$, because $A$ is an $A_{N+1}$-algebra. 
As the image of $B'_{N+1}  \circ \iota'_{N+1}$ is included in $B_{N}(A')$, because $b'_{1}$ vanishes, $F_{N+1}|_{B_{N}(A')} = F_{N}$ and 
$F_{N} \circ B'_{N} = B_{N} \circ F_{N}$, we see that $b_{1} \circ \tilde{U}_{N+1}$ coincides with 
\[      F_{N} \circ B'_{N} \circ B'_{N+1}  \circ \iota'_{N+1} = F_{N+1} \circ B'_{N+1} \circ B'_{N+1}  \circ \iota'_{N+1}.     \]
Note that $B'_{N+1} \circ B'_{N+1}  \circ \iota'_{N+1} = \pi_{1} \circ B'_{N+1} \circ B'_{N+1}  \circ \iota'_{N+1}$, because $A'$ is an $A_{N}$-algebra (see \cite{LH}, Lemme B.1.1, a), 
and thus $b_{1} \circ \tilde{U}_{N+1}$ coincides with 
\[      F_{N+1,1} \circ (\pi'_{1} \circ B'_{N+1} \circ B'_{N+1}  \circ \iota'_{N+1}).     \]
Choose any $\omega \in (A'[1])^{\otimes (N+1)}$, and consider $a' = (\pi'_{1} \circ B'_{N+1} \circ B'_{N+1}  \circ \iota'_{N+1}) (\omega)$. 
It is a cocycle in $A'[1]$, for $b'_{1}$ vanishes. 
Moreover, as $F_{N+1,1}$ induces an injection between the cohomology groups, and the image of $a'$ under $F_{N+1,1}$ is the coboundary $(b_{1} \circ \tilde{U}_{N+1})(\omega)$, 
$a'$ has to be itself a coboundary, but since $b'_{1}$ is zero, this means that $a'$ vanishes, so the same occurs to $(b_{1} \circ \tilde{U}_{N+1})(\omega)$, for $\omega$ arbitrary. 
This proves the lemma. 
\end{proof}

The following result is well-known and due to T. Kadeishvili (see \cite{K80}, Thm. 1, for the case of dg algebras, or \cite{K82}, Thm., for the case of plain $A_{\infty}$-algebras). 
We shall however provide the complete proof in the general case of $A_{\infty}$-algebras given by the previously mentioned author, which follows the usual patterns of the so called \emph{obstruction theory}, 
for we shall make use of it in the sequel. 
We also consider some extra conditions about the bigradings which are useful for our study of spectral sequences. 
\begin{theorem}
\label{theorem:kadeish} 
Let $(A,m_{\bullet})$ be a flat (resp., a flat unitary) $A_{\infty}$-algebra and $f_{1} : H^{\bullet}(A) \rightarrow A$ be the composition of a section of the canonical projection $\Ker(m_{1}) \rightarrow H^{\bullet}(A)$ and the inclusion $\Ker(m_{1}) \subseteq A$ (resp., satisfying that $f_{1} \circ \eta_{H^{\bullet}(A)} = \eta_{A}$). 
Then there exists a structure of (resp., unitary) $A_{\infty}$-algebra on $H^{\bullet}(E)$ given by $\{ \bar{m}_{n} \}_{n \in \NN}$ and a quasi-isomorphism of (resp., unitary) $A_{\infty}$-algebras 
$f_{\bullet} : H^{\bullet}(A) \rightarrow A$ whose first component is $f_{1}$, such that $\bar{m}_{1} = 0$ and $\bar{m}_{2}$ is the multiplication induced by $m_{2}$. 
Moreover, all these possible structures of (resp., unitary) $A_{\infty}$-algebras on $H^{\bullet}(A)$ are quasi-isomorphic. 
Any of these quasi-isomorphic (resp., unitary) $A_{\infty}$-structures will be called a \emph{model}. 
Furthermore, if $A$ has a compatible bigrading of $s$-th type, for some $s \in \NN_{0}$, then we may choose among these models one that is compatible with the induced bigrading on the cohomology $H^{\bullet}(A)$, 
such that the quasi-isomorphism of (resp., unitary) $A_{\infty}$-algebras $f_{\bullet} : H^{\bullet}(A) \rightarrow A$ can be chosen to be compatible with the bigradings. 
Any of these models on $H^{\bullet}(A)$ will be called \emph{compatible with the bigrading of $A$}. 
\end{theorem}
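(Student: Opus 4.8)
The plan is to construct the structure maps $\bar{m}_n$ on $H^{\bullet}(A)$ together with the components $f_n$ of the quasi-isomorphism simultaneously, by induction on $n$, following the obstruction-theoretic pattern for which Lemma \ref{lemma:ind} is the crucial input. One starts with the given data $\bar{m}_1 = 0$ and $f_1$, which takes values in $\Ker(m_1)$ so that $\mathrm{MI}(1)$ reduces to $m_1 \circ f_1 = 0$; thus $H^{\bullet}(A)$ is an $A_1$-algebra and $f_1$ an $A_1$-morphism. For the inductive step, assume $\bar{m}_2,\dots,\bar{m}_N$ make $H^{\bullet}(A)$ an $A_N$-algebra (with $\bar{m}_1 = 0$) and that $f_1,\dots,f_N$ make $f_{\bullet}$ a morphism of $A_N$-algebras into $A$. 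Form the map $U_{N+1}$ of Lemma \ref{lemma:ind} from this data; then the identity $\mathrm{MI}(N+1)$ we wish to arrange is exactly $m_1 \circ f_{N+1} - f_1 \circ \bar{m}_{N+1} = U_{N+1}$, because the remaining terms of $\mathrm{MI}(N+1)$ either vanish (those containing $\bar{m}_1 = 0$) or are precisely the ones gathered into $U_{N+1}$. In particular the step with $N = 1$ already produces $\bar{m}_2$, which is the product induced by $m_2$, and $f_2$.

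The construction of $\bar{m}_{N+1}$ and $f_{N+1}$ then goes as follows. By Lemma \ref{lemma:ind} the image of $U_{N+1}$ lies in $\Ker(m_1)$, so composing with the canonical projection $\pi : \Ker(m_1) \to H^{\bullet}(A)$ I define $\bar{m}_{N+1} = -\pi \circ U_{N+1}$, the sign being chosen so that the obstruction becomes a coboundary. Writing $f_1 = \iota \circ \sigma$ with $\sigma$ a section of $\pi$ and $\iota : \Ker(m_1) \hookrightarrow A$ the inclusion, the map $f_1 \circ \bar{m}_{N+1} + U_{N+1}$ takes values in $\Ker(m_1)$ and represents the cohomology class $-\pi U_{N+1} + \pi U_{N+1} = 0$, hence takes values in $\im(m_1)$. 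Because $A$ is flat, $H^{\bullet}(A)$ and therefore $(H^{\bullet}(A))^{\otimes(N+1)}$ are graded projective, so the map $(H^{\bullet}(A))^{\otimes(N+1)} \to \im(m_1)$ just obtained lifts along the surjection $m_1 : A \to \im(m_1)$ to a map $f_{N+1}$ satisfying $m_1 \circ f_{N+1} = f_1 \circ \bar{m}_{N+1} + U_{N+1}$, which is exactly $\mathrm{MI}(N+1)$.

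It remains to check that $\bar{m}_2,\dots,\bar{m}_{N+1}$ satisfy the Stasheff identity $\mathrm{SI}(N+1)$; this is the main obstacle, since $\mathrm{MI}(N+1)$ alone does not guarantee it. I would argue in the bar language: let $\bar{B}$ be the coderivation on the tensor coalgebra of $H^{\bullet}(A)[1]$ determined by $\bar{m}_2,\dots,\bar{m}_{N+1}$, let $B$ be the differential of $B(A)$, and let $F$ be the coalgebra morphism determined by $f_1,\dots,f_{N+1}$. The identities $\mathrm{MI}(n)$ for $n \le N+1$ say precisely that the $(F,F)$-coderivation $F \circ \bar{B} - B \circ F$ has vanishing corestriction on tensors of length $\le N+1$, and hence vanishes on such tensors. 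Since $\bar{B} \circ \iota'_{N+1}$ already lands in tensors of length $\le N$, one computes
\begin{equation*}
     F \circ \bar{B} \circ \bar{B} \circ \iota'_{N+1} = B \circ F \circ \bar{B} \circ \iota'_{N+1} = B \circ B \circ F \circ \iota'_{N+1} = 0,
\end{equation*}
using $B \circ B = 0$. Projecting by $\pi_1$ onto $A[1]$, the contributions of output length $\ge 2$ in $\pi_1 \circ F \circ \bar{B} \circ \bar{B} \circ \iota'_{N+1}$ vanish by the inductive hypothesis $\mathrm{SI}(n)$ for $n \le N$ (they apply the corestriction of $\bar{B}\circ\bar{B}$ to sub-blocks of length $\le N$), leaving only $F_1$ applied to the $\mathrm{SI}(N+1)$-expression of the $\bar{m}$'s. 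As $F_1$ corresponds to the injective map $f_1$, this forces $\mathrm{SI}(N+1)$ to hold, completing the induction and producing the model together with the quasi-isomorphism $f_{\bullet}$.

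Finally, the remaining assertions are handled by the same machinery. For the unitary case one checks inductively that $U_{N+1}$ vanishes whenever a unit is inserted, using $f_1 \circ \eta_{H^{\bullet}(A)} = \eta_A$ and strict unitality of the $m_i$, so that $\bar{m}_{N+1}$ and the lift $f_{N+1}$ may be chosen strictly unitary, the latter by lifting through a unital splitting. For compatibility with a bigrading of $s$-th type, one first picks the section $\sigma$ homogeneous, possible by bigraded projectivity, and then observes that $U_{N+1}$ is homogeneous of bidegree $(1-N)(s,-s+1)$, so that $\bar{m}_{N+1} = -\pi \circ U_{N+1}$ has the required bidegree $(1-N)(s,-s+1) = (2-(N+1))(s,-s+1)$ while $f_{N+1}$ can be chosen homogeneous by performing the projective lift in the bigraded category. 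For uniqueness up to quasi-isomorphism, given two models I would construct a quasi-isomorphism between them by an entirely analogous inductive obstruction argument: since both minimal structures have vanishing differential, the relevant obstructions are again cocycle-valued maps whose classes can be corrected using projectivity of $H^{\bullet}(A)$, the first component being taken to be the identity of $H^{\bullet}(A)$.
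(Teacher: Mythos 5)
Your proposal is correct and follows essentially the same route as the paper's own proof: the same obstruction map $U_{N+1}$ controlled by Lemma \ref{lemma:ind}, the same definitions $\bar{m}_{N+1} = -\pi \circ U_{N+1}$ and lift of $f_{N+1}$ along $m_{1}$, and the same bar-construction argument deducing $\mathrm{SI}(N+1)$ from $B \circ B = 0$ and the injectivity of $f_{1}$. The only (welcome) difference is that you make explicit where flatness enters, namely in the graded-projective lifting step producing $f_{N+1}$ and the homogeneous section $\sigma$.
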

\begin{proof}
The result follows from an typical recurrence argument, which we now recall. 
First note that, since $f_{1}$ is included in the kernel of $m_{1}$ by construction, it satisfies that $m_{1} \circ f_{1} = f_{1} \circ \bar{m}_{1} = 0$, which is the first Stasheff identity on the morphisms $\mathrm{MI}(1)$. 
If $A$ is unitary, for $m_{1}(1_{A})$ vanishes, we choose $1_{H^{\bullet}(A)}$ to be the cohomology class of $1_{A}$, so it is possible to choose $f_{1}$ such that 
$f_{1} \circ \eta_{H^{\bullet}(A)} = \eta_{A}$. 
Suppose that for a given $N \in \NN$ we have constructed morphisms $\bar{m}_{n} : H^{\bullet}(A)^{\otimes n} \rightarrow H^{\bullet}(A)$ of degree $2-n$, for all $1 \leq n \leq N$
and $f_{n} : H^{\bullet}(A)^{\otimes n} \rightarrow A$ of cohomological degree $1- n$, for all $1 \leq n \leq N$, such that $\bar{m}_{1} = 0$, 
$f_{1}$ coincides with the morphism given in the statement of the theorem, and they satisfy the Stasheff identities $\mathrm{SI}(n)$ and $\mathrm{MI}(n)$, for $1 \leq n \leq N$. 
In the case we assume $A$ is unitary, we also suppose that $\bar{m}_{n}(\bar{a}_{1}, \dots, \bar{a}_{n})$ vanishes 
if there exists $j \in \{1, \dots, n \}$ such that $\bar{a}_{j} = 1_{H^{\bullet}(A)}$, for $3 \leq n \leq N$, and that  $f_{n}(\bar{a}_{1}, \dots, \bar{a}_{n})$ vanishes 
if there exists $j \in \{1, \dots, n \}$ such that $\bar{a}_{j} = 1_{H^{\bullet}(A)}$, for $2 \leq n \leq N$. 
This only means that $H^{\bullet}(A)$ has a structure of (resp., unitary) $A_{N}$-algebra and there is a quasi-isomorphism of (resp., unitary) $A_{N}$-algebras from $H^{\bullet}(A)$ to 
(the underlying (resp., unitary) $A_{N}$-algebra of) $A$.  
Define the morphism from $H^{\bullet}(A)^{\otimes (N+1)}$ to $A$ of degree $1-N$ given by 
\begin{equation}
\label{eq:ainftyalgebraobst}
\begin{split}
   U_{N+1} = &\sum_{(r,s,t) \in \mathcal{I}_{N+1}^{*}} (-1)^{r + s t}  f_{r + 1 + t} \circ (\mathrm{id}_{H^{\bullet}(A)}^{\otimes r} \otimes \bar{m}_{s} \otimes \mathrm{id}_{H^{\bullet}(A)}^{\otimes t}) 
   \\
   &- \sum_{q \in \NN_{\geq 2}} \sum_{\bar{i} \in \NN^{q, N+1}} (-1)^{w} m_{q} \circ (f_{i_{1}} \otimes \dots \otimes f_{i_{q}}),
\end{split}
\end{equation} 
where we recall that $\mathcal{I}_{m}^{*} = \{ (r,s,t) \in \NN_{0} \times \NN_{\geq 2} \times \NN_{0} : r + s + t = m,  0 < r + t <  m - 1 \}$, $w = \sum_{j=1}^{q} (q-j) (i_{j} - 1)$ and $\NN^{q,m}$ is the subset of $\NN^{q}$ of elements $\bar{i} = (i_{1},\dots,i_{q})$ such that $|\bar{i}| = i_{1} + \dots + i_{q} = m$. 
By Lemma \ref{lemma:ind} we have that $m_{1} \circ U_{N+1}$ vanishes. 
We define $\bar{m}_{N+1}$ as minus the composition of $U_{N+1}$ and the canonical projection $\pi : \mathrm{Ker}(m_{1}) \rightarrow H^{\bullet}(A)$. 
By the very definition of $U_{N+1}$ we see that $U_{N+1} + f_{1} \circ \bar{m}_{N+1} = (\mathrm{id}_{\mathrm{Ker}(m_{1})} - f_{1} \circ \pi) \circ U_{N+1}$ lies in the image of $m_{1}$, 
so there exists a morphism $f_{N+1} : H^{\bullet}(A)^{\otimes (N+1)} \rightarrow A$ (necessarily of degree $-N$) such that 
\begin{equation}
\label{eq:ainfdos}
    U_{N+1} + f_{1} \circ \bar{m}_{N+1} = m_{1} \circ f_{N+1},
\end{equation} 
which is just the Stasheff identity on morphisms $\mathrm{MI}(N+1)$. 
If we assumed $A$ is unitary, then a trivial verification shows that  we may choose $f_{N+1}(\bar{a}_{1}, \dots, \bar{a}_{N+1})$ such that it vanishes 
if there exists $j \in \{1, \dots, N+1 \}$ such that $\bar{a}_{j} = 1_{H^{\bullet}(A)}$. 
We see that $\bar{m}_{N+1}$ together with the previously considered multiplications $\bar{m}_{2}, \dots, \bar{m}_{N}$ satisfy the Stasheff identity $\mathrm{SI}(N+1)$. 
Indeed, these Stasheff identities can be easily proved by considering the bar constructions of $H^{\bullet}(A)$ and of $A$ up to tensor degree $(N + 1)$ with their corresponding coderivations $\bar{B}_{N+1}$ and $B_{N+1}$, constructed from $\bar{m}_{2}, \dots, \bar{m}_{N+1}$ and from $m_{1}, \dots, m_{N+1}$ respectively. 
The morphism identities up to degree $(N+1)$ are tantamount to show that the induced morphism $F_{N+1}$ by $f_{1}, \dots, f_{N+1}$ 
between the bar constructions of $H^{\bullet}(A)$ and of $A$ up to tensor degree $(N + 1)$ satisfies that $B_{N+1} \circ F_{N+1} = F_{N+1} \circ \bar{B}_{N+1}$. 
The fact that $A$ satisfies the $A_{N+1}$-algebra axioms tells us that $B_{N+1} \circ B_{N+1} = 0$. 
Taking into account that $F_{1}$ is injective, for $f_{1}$ is also by definition (see \cite{HR}, Prop. 2.4.2), we get that $\bar{B}_{N+1} \circ \bar{B}_{N+1} = 0$, which in turn implies the Stasheff identity $\mathrm{SI}(N+1)$ for $H^{\bullet}(A)$. 
If we assumed $A$ is unitary, then a trivial verification shows that $\bar{m}_{N+1}(\bar{a}_{1}, \dots, \bar{a}_{N+1})$ vanishes 
if there exists $j \in \{1, \dots, N+1 \}$ such that $\bar{a}_{j} = 1_{H^{\bullet}(A)}$. 
The last statement about the bigradings follows easily by considering the corresponding bidegrees in equations \eqref{eq:ainftyalgebraobst} and \eqref{eq:ainfdos}.  
\end{proof}

\begin{remark}
\label{remark:merkulov} 
In the case $A$ is a (unitary) dg algebra, S. Merkulov has given in \cite{Mer}, Thm. 3.4, a fairly explicit procedure to construct possible higher multiplications of the (unitary) 
$A_{\infty}$-structure on the cohomology space $H^{\bullet}(A)$. 
The corresponding quasi-isomorphism from the cohomology space $H^{\bullet}(A)$ to $A$ can be explicitly constructed (see \cite{LPWZ09}, Prop. 2.3). 
In the general case of an $A_{\infty}$-algebra, completely explicit expressions for the model and the quasi-isomorphism can be systematically given using \emph{homological perturbation theory} 
(see for instance \cite{GLS}). 
\end{remark}

\begin{remark}
\label{remark:impkade}
Suppose $k$ is a field. 
A far more general version, also proved using obstruction theory and the closed model category structure (with only some limits) is stated in \cite{LH}, Th\'eor\`eme 1.4.1.1. 
In fact, the first part of its proof shows that, if $(M,d_{M})$ is a complex of $k$-modules, provided with a surjective quasi-isomorphism $g : M \rightarrow A$ from $M$ to the underlying complex 
of an $A_{\infty}$-algebra $A$, then, since the kernel $\Ker(g)$ is contractible, the complex $M$ is isomorphic to the direct sum complex $K \oplus A$ such that $g$ is identified to the canonical projection 
$K \oplus A \rightarrow A$. 
This in turn implies that the product $A_{\infty}$-algebra structure on $K \oplus A$, where we regard $K$ as an $A_{\infty}$-algebra with $m_{i}^{K} = 0$, for $i \in \NN_{\geq 2}$, 
gives the desired $A_{\infty}$-algebra structure on $M$, and the canonical morphism of $A_{\infty}$-algebras from the product $K \prod A$ to $A$ extends (and in fact coincides with) 
the canonical projection of complexes $K \oplus A \rightarrow A$. 
We recall that the product $A_{\infty}$-algebra $A \prod A'$ of two $A_{\infty}$-algebras $A$ and $A'$ exists by \cite{LH}, Th\'eor\`eme 1.3.3.1, (A), but in this case a simple computation suffices: if $B(A)$ and $B(A')$ 
are the bar constructions of two $A_{\infty}$-algebras $A$ and $A'$, the product in the category of cocomplete dg coalgebras of $B(A)$ and $B(A')$ exists and it is given by the graded tensor coalgebra 
$\bar{T}^{c}(A[1] \oplus A'[1])$ with the induced differential by that of $B$ and $B'$. 
This is the bar construction of the product $A \prod A'$ (the universal property follows from the well-known fact stated in \cite{LH}, Lemme 1.1.2.2). 
Using this result together with the previous theorem (by writing out a quasi-isomorphism of complexes from $M$ to $A$ factorized through $H^{\bullet}(A)$) we see that, 
if there is a quasi-isomorphism of complexes between a complex $(M,d_{M})$ and the underlying complex of an $A_{\infty}$-algebra $A$, 
there exists an $A_{\infty}$-algebra structure on $M$ together with a quasi-isomorphism of $A_{\infty}$-algebras to $A$. 
\end{remark}

\section{\texorpdfstring{Deformations of $A_{\infty}$-algebras}{Deformations of A-infinity-algebras}} 
\label{section:defo}

The algebraic version of deformation theory (for associative algebras) was initiated by M. Gerstenhaber in \cite{Ger64}, and was further extended for dg algebras by the mentioned author and C. Wilkerson in \cite{GW96}. 
The theory for $A_{\infty}$-algebras is generalized straightforward, and was considered by A. Fialowski and M. Penkava in \cite{FP}, but was also in the unpublished article \cite{Wu} by E. Wu. 

In this section we will let $k$ be a commutative ring with unit and let $R$ be commutative unitary $k$-algebra $R$, considered to be concentrated in zero cohomological degree, 
provided with an augmentation morphism of $k$-algebras $\epsilon_{R} : R \rightarrow k$. 
We further suppose that $R$ is finitely generated as $k$-module. 
Such algebras are usually called \emph{test algebras}. 
This is not the usual setting of deformation theory, because one additionally imposes that $k$ is a field and $R$ is local, but this is more appropriate for our purposes. 
In this brief section, a graded $R$-module will denote a symmetric graded $R$-bimodule. 
Given an $A_{\infty}$-algebra $(A, m_{\bullet})$ over $k$, an \emph{$R$-deformation of $A$} consists in the structure of $A_{\infty}$-algebra over $R$ on the (cohomologically) graded $R$-module 
$A \otimes R = \oplus_{n \in \ZZ} (A^{n} \otimes R)$ given by morphisms of $R$-modules
\[     m_{n}^{R} : (A \otimes R)^{\otimes_{R} n} \rightarrow A \otimes R     \]
of degree $2-n$, for all $n \in \NN$, satisfying that $(\mathrm{id}_{A} \otimes \epsilon_{R}) \circ m_{n}^{R}$ restricted to $A^{\otimes n}$ coincides with $m_{n}$, for all $n \in \NN$. 
We remark that $A^{\otimes n}$ is canonically identified with a $k$-submodule of $(A \otimes R)^{\otimes_{R} n}$ by means of the map 
$a_{1} \otimes \dots \otimes a_{n} \mapsto (a_{1} \otimes 1_{R}) \otimes_{R} \dots \otimes_{R} (a_{n} \otimes 1_{R})$. 
Taking into account that $(A \otimes R)^{\otimes_{R} n} \simeq A^{\otimes n} \otimes R$, using the previous identification we may rephrase the last condition for the definition of deformation 
as $m_{n}^{R} \otimes_{R} \mathrm{id}_{k} = m_{n}$, for all $n \in \NN$. 
If $A$ has a unit $1_{A}$, we say that the deformation \emph{preserves the unit} if $1_{A} \otimes 1_{R} \in A \otimes R$ is a strict unit for the structure maps $\{m_{n}^{R}\}_{n \in \NN}$. 
Given two $R$-deformations $A'$ and $A''$ of the $A_{\infty}$-algebra $A$, a \emph{(resp., strict) morphism of $R$-deformations} from $A'$ to $A''$ is a (resp., strict) morphism $f_{\bullet}$ of $A_{\infty}$-algebras over $R$ from $A'$ to $A''$ such that 
$f_{\bullet} \otimes_{R} \mathrm{id}_{k}$ is the identity of $A$. 
If $A$ has a unit, and $A'$ and $A''$ preserve the unit, we shall further impose that $f_{\bullet}$ is a morphism of unitary $A_{\infty}$-algebras over $R$. 
Moreover, the $R$-deformation $A'$ and $A''$ are said to be \emph{equivalent} if there exists a morphism 
$f_{\bullet} : A' \rightarrow A''$ of $R$-deformations of $A$ such that the induced morphism $F$ between the bar constructions $B(A')$  of $A'$ and $B(A'')$ of $A''$ 
(where we remark that $A'$ and $A''$ are (resp., unitary) $A_{\infty}$-algebras over $R$) is an isomorphism of dg coalgebras over $R$.   

The typical examples we may be interested in are when $R = k[\hbar]/(\hbar^{(N+1)})$, for $N \in \NN$, and most particularly, when $k[\hbar]/(\hbar^{(N+1)})$ is regarded as a bigraded algebra with 
$\hbar$ of bidegree $(-1,1)$. 
In this case we will focus on $k[\hbar]/(\hbar^{(N+1)})$-deformations of (resp., unitary) $A_{\infty}$-algebras with a compatible bigrading of $s$-th type, for some $s \in \NN_{0}$ and $N \in \NN$, 
such that the bigrading of the previous deformations are compatible of $s$-th type when they are regarded as (resp., unitary) $A_{\infty}$-algebras over $k$. 
These deformations will be called \emph{bigraded}. 

If $A$ and $B$ are two (resp., unitary) $A_{\infty}$-algebras over $k$, and $A_{R}$ and $B_{R}$ are two $R$-deformations of $A$ and $B$, respectively, 
a \emph{morphism} from $A_{R}$ to $B_{R}$ is the data of a morphism $f_{\bullet}$ of (resp., unitary) $A_{\infty}$-algebras from $A$ to $B$, together with a morphism of 
(resp., unitary) $A_{\infty}$-algebras over $R$ given by a collection of maps 
\[     \tilde{f}_{n} : A_{R}^{\otimes_{R} n} \rightarrow B_{R},     \]
for $n \in \NN$, such that $\tilde{f}_{n} \otimes_{R} \mathrm{id}_{k}$ coincides with $f_{n}$, for all $n \in \NN$, 
where we are making use of the isomorphisms $(A \otimes R)^{\otimes_{R} n} \simeq A^{\otimes n} \otimes R$ explained at the previous paragraph. 
We shall say that a morphism $(f_{\bullet}, \tilde{f}_{\bullet})$ of $R$-deformations is a \emph{quasi-isomorphism} if both $f_{\bullet}$ and $\tilde{f}_{\bullet}$ are quasi-isomorphisms. 
If the (resp., unitary) $A_{\infty}$-algebras are provided with a compatible bigrading of $s$-th type, for some $s \in \NN_{0}$, $R = k[\hbar]/(\hbar^{(N+1)})$, for some $N \in \NN$, 
and the considered deformations are bigraded, we shall further assume that the morphisms $f_{\bullet}$ and $\tilde{f}_{\bullet}$ are compatible with the bigradings.

Note that, if $A'$ is an $R$-deformation of the (resp., unitary) $A_{\infty}$-algebra $A$ and let $T$ be a quotient of $R$. 
Then $A' \otimes_{R} T$ together with the maps $m_{n}^{R} \otimes_{R} \mathrm{id}_{T}$ is a $T$-deformation of $A$, called the \emph{deformation reduction to $T$ of $A'$}. 
Given a family of test algebras $\{ R_{i} \}_{i \in \NN}$ together with quotient maps $R_{i+1} \rightarrow R_{i}$ for $i \in \NN$ 
and a collection of (resp., unitary) $A_{\infty}$-algebras $\{A_{i}\}_{i \in \NN}$ such that $A_{i}$ is an 
$R_{i}$-deformation of $A$, we say that this family is \emph{compatible} if the deformation reduction of $A_{i+1}$ to $R_{i}$ given by the (resp., unitary) $A_{\infty}$-algebra 
$A_{i+1} \otimes_{R_{i+1}} R_{i}$ over $R_{i}$ and the $R_{i}$-deformation $A_{i}$ of $A$ are strictly isomorphic. 
Given a family of test algebras $\{ R_{i} \}_{i \in \NN}$ together with quotient maps $R_{i+1} \rightarrow R_{i}$ for $i \in \NN$, a \emph{morphism} from a compatible collection of 
deformations of (resp., unitary) $A_{\infty}$-algebras $\{ A_{i} \}_{i \in \NN}$ of $A$ to another compatible collection of deformations of (resp., unitary) $A_{\infty}$-algebras $\{ B_{i} \}_{i \in \NN}$ of $B$ 
is the data of a morphism $f_{\bullet} : A \rightarrow B$ of (resp., unitary) $A_{\infty}$-algebras together with collection of morphisms $\{ (f_{\bullet})_{i} \}_ {i \in \NN}$, where $(f_{\bullet})_{i}$ is a morphism of (resp., unitary) $A_{\infty}$-algebras over $R_{i}$ from $A_{i}$ to $B_{i}$, such that $(f_{\bullet}, (f_{\bullet})_{i})$ is a morphism of $R$-deformations from $A_{i}$ to $B_{i}$, for all $i \in \NN$, 
satisfying that $(f_{\bullet})_{i+1} \otimes_{R_{i+1}} \mathrm{id}_{R_{i}}$ coincides with $(f_{\bullet})_{i}$, for all $i \in \NN$. 
In this case we say that a morphism $(f_{\bullet}, \{ (f_{\bullet})_{i} \}_ {i \in \NN})$ of compatible families of deformation is a \emph{quasi-isomorphism} if each 
$(f_{\bullet}, (f_{\bullet})_{i})$ is a quasi-isomorphism of $R_{i}$-deformations, for all $i \in \NN$. 
The main example we will have in mind is for the family of test algebras $R_{i} = k[\hbar]/(\hbar^{i+1})$, for $i \in \NN$. 
In this case, we will follow all the grading assumption introduced in the two previous paragraphs. 

We would like to provide the notion of formal deformation of a (unitary) $A_{\infty}$-algebra. 
Since our interest comes from spectral sequences where there is a bigrading, we shall deal with the case that our (unitary) $A_{\infty}$-algebra has a compatible bigrading. 
Most of the results we present here are analogous to the standard ones on formal deformation theory, which can be found for instance in \cite{Ka}, Ch. XVI, Sections 1-4. 
Some of the proofs are completely parallel to ones given there, and in that cases we only give the corresponding exact reference.  
Our difference relies that we also want to deal with the corresponding bigrading. 
For this reason, we will briefly explain the corresponding analogous setting of our interest.  
We will thus consider the commutative bigraded complete $k$-algebra in one indeterminate $k[\hbar]$, which is supposed to be bigraded by setting $\hbar$ in bidegree $(-1,1)$ (and $k$ with trivial bidegree).  
As before, we have the obvious augmentation map sending $\hbar$ to zero which will be denoted by $\epsilon_{k[\hbar]}$. 
What we say holds if the bidegree of $\hbar$ is much more general but we will not need that. 
Given any bigraded module $M$ over $k[\hbar]$, the inverse limit in the category of bigraded $k[\hbar]$-modules 
\[     \underset{\leftarrow N}{\lim}^{\mathrm{bgr}} \hskip 0.6mm M \otimes_{k[\hbar]} k[\hbar]/(\hbar^{N}).     \]
will be called the \emph{bigraded completion} of $M$, and will be denoted by $\hat{M}^{\mathrm{bgr}}$. 
There is a canonical morphism 
\[     \tau_{M}^{\mathrm{bgr}} : M \rightarrow \hat{M}^{\mathrm{bgr}}     \]
of bigraded $k[\hbar]$-modules, and we will say that $M$ is \emph{bigraded complete} if this latter map is an isomorphism. 
Also note that, given two bigraded $k[\hbar]$-modules $M$ and $N$, any homogeneous morphism (of bigraded $k[\hbar]$-modules) $f$ from $M$ to $N$ 
of bidegree $(p,q)$ automatically induces a homogeneous morphism (of bigraded $k[\hbar]$-modules) $\hat{f}^{\mathrm{bgr}}$ from $\hat{M}^{\mathrm{bgr}}$ to $\hat{N}^{\mathrm{bgr}}$ of the same bidegree, 
and it is usually called the \emph{bigraded completion (morphism) of $f$}. 
Given two bigraded modules $M$ and $N$ over $k[\hbar]$, the \emph{bigraded completed tensor product} is defined as the bigraded completion 
of the usual tensor product $M \otimes_{k[\hbar]} N$, and it is denoted by 
\[      M \hat{\otimes}_{k[\hbar]}^{\mathrm{bgr}} N.     \] 
Moreover, if $V = \oplus_{p,q \in \ZZ} V^{p,q}$ is a bigraded module over $k$, we may consider the bigraded $k[\hbar]$-module given by the tensor product $V \otimes k[\hbar]$, and we regard $V$ inside it via 
$v \mapsto v \otimes 1_{k[\hbar]}$. 
The bigraded completion of the previous $k[\hbar]$-module would be denoted by $V[[\hbar]]^{\mathrm{bgr}}$ (which does not coincide in general with the previous usual tensor product). 
Notice that $V[[\hbar]]^{\mathrm{bgr}}$ coincides with the inverse limit in the category of bigraded $k[\hbar]$ given by 
\begin{equation}
\label{eq:grcompl}
     \underset{\leftarrow N}{\lim}^{\mathrm{bgr}} \hskip 0.6mm V \otimes k[\hbar]/(\hbar^{N}).     
\end{equation}
We see that $V[[\hbar]]^{\mathrm{bgr}}$ is given in more in more explicit terms by the bigraded $k$-module 
\begin{equation}
\label{eq:defo}
     \bigoplus_{p,q \in \ZZ} \left(\prod_{r \in \NN_{0}} (V^{p+r,q-r} \otimes k.\hbar^{r})\right),     
\end{equation}
together with the obvious action of $k[\hbar]$. 
Indeed, the inverse limit \eqref{eq:grcompl} is by very definition the bigraded $k$-module with homogeneous $(p,q)$-th component 
\[     \underset{\leftarrow N}{\lim} \hskip 0.6mm (V \otimes k[\hbar]/(\hbar^{N}))^{p,q} = \underset{\leftarrow N}{\lim} \hskip 0.6mm \Big(\prod_{j=0}^{N} V^{p+j,q-j} \otimes k.\hbar^{j}\Big),      \]
which yields 
\[     \prod_{r \in \NN_{0}} (V^{p+r,q-r} \otimes k.\hbar^{r}).     \]

If $f : M' \rightarrow M$ and $f : N' \rightarrow N$ are two homogeneous morphisms of bigraded $k[\hbar]$-modules of bidegrees $(p,q)$ and $(p',q')$, respectively, 
the bigraded completion of the morphism of $k[\hbar]$-modules 
\[     f \otimes_{k[\hbar]} g : M' \otimes_{k[\hbar]} N' \rightarrow M \otimes_{k[\hbar]} N     \] 
will be denoted by $f \hat{\otimes}_{k[\hbar]}^{\mathrm{bgr}} g$.  
Any bigraded $k[\hbar]$-module $M$ isomorphic to $V[[\hbar]]^{\mathrm{bgr}}$ for some free bigraded $k$-module $V$ will be called \emph{$\hbar$-topologically free}. 
We remark that we have chosen to add an $\hbar$ to the usual terminology (\textit{cf.} \cite{Ka}, p. 388), because in our (more general) situation an $\hbar$-topologically free module 
is not necessarily the completion of a free $k[\hbar]$-module. 
However, as we may easily see, all the ``non-freeness'' comes from the structure as module over $k$, and not from the action of $\hbar$. 
That is the reason of our terminology. 
Of course, if $k$ is a field, an $\hbar$-topologically free module coincides with the usual notion of topologically free module. 
Furthermore, it is easy to show that any bigraded $k[\hbar]$-module $M$ is $\hbar$-topologically free if and only if the canonical projection morphism $M \rightarrow M/\hbar.M$ of bigraded $k$-modules has a section, 
it is bigraded complete and \emph{$\hbar$-torsion-free}, \textit{i.e.} if $\hbar . m = 0$ for some $m \in M$, then $m = 0$ 
(the exact same proof as in \cite{Ka}, Prop. XVI.2.4, applies here as well). 
Given two free bigraded modules $V$ and $W$ over $k$, we have a canonical (homogeneous) isomorphism 
\[     (V \otimes W)[[\hbar]]^{\mathrm{bgr}} \simeq V[[\hbar]]^{\mathrm{bgr}} \hat{\otimes}_{k[\hbar]}^{\mathrm{bgr}} W[[\hbar]]^{\mathrm{bgr}}     \] 
of bigraded $k[\hbar]$-modules of trivial bidegree  (\textit{cf.} \cite{Ka}, Prop. XVI.3.2). 
Indeed, by the description of $\hbar$-topologically free modules given in \eqref{eq:defo}, we get that 
\[       (V \otimes W)[[\hbar]]^{\mathrm{bgr}} \simeq \bigoplus_{p,q \in \ZZ} \left(\prod_{t \in \NN_{0}} \Big(\bigoplus_{p',q' \in \ZZ} (V^{p',q'} \otimes W^{p+t-p',q-t-q'} \otimes k.\hbar^{t})\Big)\right),     \]
whereas 
\begin{align*}
       &V[[\hbar]]^{\mathrm{bgr}} \otimes_{k[\hbar]} W[[\hbar]]^{\mathrm{bgr}} 
         \\
    &\simeq 
\bigoplus_{p',p'',q',q'' \in \ZZ} \left(\Big(\prod_{r \in \NN_{0}} (V^{p'+r,q'-r} \otimes k.\hbar^{r})\Big) \otimes_{k[\hbar]}
\Big(\prod_{s \in \NN_{0}} W^{p''+s,q''-s} \otimes k.\hbar^{s})\Big)\right) 
\end{align*}
so 
\begin{multline*}
   (V[[\hbar]]^{\mathrm{bgr}} \otimes_{k[\hbar]} W[[\hbar]]^{\mathrm{bgr}}) \otimes_{k[\hbar]} k[\hbar]/(\hbar^{(N+1)})
   \\
   \simeq \bigoplus_{p,q \in \ZZ} \left(\prod_{t \in \NN_{0, \leq N}} \Big(\bigoplus_{p',q' \in \ZZ} (V^{p',q'} \otimes W^{p+t-p',q-t-q'} \otimes k.\hbar^{t})\Big)\right), 
\end{multline*}
and the claim follows by taking inverse limit. 
Note that, if $M$ is a bigraded complete $k[\hbar]$-module the canonical map $\mathcal{H}om_{k[\hbar]}(V[[\hbar]]^{\mathrm{bgr}},M) \rightarrow \mathcal{H}om_{k}(V,M)$ given by restriction is an isomorphism of $k[\hbar]$-modules, 
where we recall that $\mathcal{H}om$ denotes the corresponding internal space of morphisms in the category of bigraded $k$-modules (\textit{cf.} \cite{Ka}, Prop. XVI.2.3. The exact same proof works here as well). 
Finally, we remark that, given a free bigraded $k$-module $M$, the canonical map $M \otimes k[\hbar]/(\hbar^{N}) \rightarrow M[[\hbar]]^{\mathrm{bgr}} \otimes_{k[\hbar]} k[\hbar]/(\hbar^{N})$ of right $k[\hbar]/(\hbar^{N})$-modules 
induced by the morphism $m \otimes 1_{k[\hbar]/(\hbar^{N})} \mapsto (m \otimes 1_{k[\hbar]}) \otimes_{k[\hbar]} 1_{k[\hbar]/(\hbar^{N})}$ is an isomorphism. 

A \emph{formal bigraded deformation} $A_{\hbar}$ of an (resp., a unitary) $A_{\infty}$-algebra $(A, m_{\bullet})$ over $k$ provided with a compatible bigrading of $s$-th type, for some $s \in \NN_{0}$, 
is given by the $\hbar$-topologically free bigraded $k[\hbar]$-module 
$A[[\hbar]]^{\mathrm{bgr}}$, where we recall that $\hbar$ has bidegree $(-1,1)$, 
provided with a collection of morphisms  
\[     m_{n}^{\hbar} : A_{\hbar}^{\hat{\otimes}_{k[\hbar]}^{\mathrm{bgr}} n} \rightarrow A_{\hbar},     \]
for $n \in \NN$, of bidegree $(2-n) (s,-s+1)$ for $n \in \NN$, such that the collection
\begin{equation}
\label{eq:intdefo}
     \{ (A_{\hbar} \otimes_{k[\hbar]} k[\hbar]/(\hbar^{(N+1)}), m_{\bullet}^{\hbar} \otimes_{k[\hbar]} \mathrm{id}_{k[\hbar]/(\hbar^{(N+1)})}) \}_{N \in \NN}     
\end{equation}
forms a compatible family of (bigraded) deformations of the (resp., unitary) $A_{\infty}$-algebra $A$ with respect to the family of test bigraded algebras $\{ k[\hbar]/(\hbar^{(N+1)}) \}_{N \in \NN}$, considered with the canonical projections. 
This implies that the restriction of the composition of $m_{n}^{\hbar}$ with $(\mathrm{id}_{A} \otimes \epsilon_{k[\hbar]})$ to $A^{\otimes n}$ coincides with $m_{n}$, for all $n \in \NN$. 
We remark that there is a canonical injection  
\[     A^{\otimes n} \rightarrow A_{\hbar}^{\hat{\otimes}_{k[\hbar]}^{\mathrm{bgr}} n}     \]
of bigraded $k$-submodule by means of the map $a_{1} \otimes \dots \otimes a_{n} \mapsto (a_{1} \otimes 1_{k[\hbar]}) \otimes_{k[\hbar]} \dots \otimes_{k[\hbar]} (a_{n} \otimes 1_{k[\hbar]})$. 
Note that by definition each of the deformations \eqref{eq:intdefo} are in fact (resp., unitary) $A_{\infty}$-algebras provided with a compatible bigrading of $s$-th type (when regarded over $k$). 

\begin{remark}
\label{remark:defofor}
We could have defined a formal bigraded deformation $A_{\hbar}$ of an (resp., a unitary) $A_{\infty}$-algebra $A$ as an $\hbar$-topologically free bigraded $k[\hbar]$-module provided with a structure of 
(resp., unitary) $A_{\infty}$-algebra over $k[\hbar]$, such that the underlying (resp., unitary) $A_{\infty}$-algebra over $k$ has a compatible bigrading of $s$-th type, 
and the reduction $A_{\hbar} \otimes_{k[\hbar]} k$ is strictly isomorphic to $A$. 
We note that in this case the structure maps $\tilde{m}_{i}$ of $A_{\hbar}$ should be given by maps
\[      \tilde{m}_{i} : A_{\hbar}^{\otimes_{k[\hbar]} i} \rightarrow A_{\hbar}     \] 
instead of having as domain the completed tensor products appearing in the definition of formal bigraded deformation. 
However, since $A_{\hbar}$ is complete and $\tilde{m}_{i}$ is $k[\hbar]$-linear it induces a structure map $m_{i}^{\hbar}$ as in the definition of the previous paragraph. 
A similar remark can be stated for the definition of morphisms in the next paragraph. 
We believe however that the proper definition should always rely on an inverse system of the type we considered, which is the usual manner one follows to handle the general case (\textit{e.g.} if $\hbar$ is ungraded, etc).  
\end{remark}

If $A$ and $B$ are two (resp., unitary) $A_{\infty}$-algebras over $k$ provided with compatible bigradings of $s$-th type, for some $s \in \NN_{0}$, 
and $A_{\hbar}$ and $B_{\hbar}$ are two formal bigraded deformations of $A$ and $B$, respectively, 
a \emph{morphism} from $A_{\hbar}$ to $B_{\hbar}$ is the data of a morphism $f_{\bullet}$ of (resp., unitary) $A_{\infty}$-algebras from $A$ to $B$ compatible with the bigradings, together with a collection of maps 
\[     \tilde{f}_{n} : (A_{\hbar}^{\mathrm{bgr}})^{\hat{\otimes}_{k[\hbar]}^{\mathrm{bgr}} n} \rightarrow B_{\hbar}^{\mathrm{bgr}},     \]
for $n \in \NN$, of bidegree $(1-n) (s,-s+1)$ for $n \in \NN$, such that the collection
\begin{equation}
\label{eq:intdefobis}
     \{ (\tilde{f}_{i} \otimes_{k[\hbar]} k[\hbar]/(\hbar^{(N+1)}))_{i \in \NN}  \}_{N \in \NN}     
\end{equation}
forms a morphism of compatible collections a bigraded deformations of (resp., unitary) $A_{\infty}$-algebras, with respect to the family of test bigraded algebras given by $\{ k[\hbar]/(\hbar^{(N+1)}) \}_{N \in \NN}$, 
considered with the canonical projections. 

\begin{remark}
\label{remark:quasi-isodefbis}
Let $A_{\hbar}$ and $B_{\hbar}$ be formal bigraded deformations of the (resp., unitary) $A_{\infty}$-algebra $A$ and $B$, resp., where the two latter are provided with a compatible bigrading of $s$-th type, 
for some $s \in \NN_{0}$. 
Let $(f_{\bullet}, \tilde{f}_{\bullet})$ be a morphism of formal bigraded deformations from the deformation $A_{\hbar}$ of $A$ to the deformation $B_{\hbar}$ of $B$. 
It is a quasi-isomorphism if and only if the morphism $\tilde{f}_{1}$ is a quasi-isomorphism between the corresponding underlying complexes over $k[\hbar]$. 
This follows from the easy fact, when writing out the definition that the induced morphism between the cohomology groups is an isomorphism, that the term corresponding to the zeroth power of 
$\hbar$ gives exactly the definition for $f_{1} = f_{1}^{0}$ to be a quasi-isomorphism. 
\end{remark}

The following result is similar to the one proved by Lapin in \cite{La02a}, Thm. 3.1 and Cor. 3.1, but our proof follows the original Kadeishvili idea of obstruction theory, instead that of homological perturbation theory. 
As a detail, we do not need $k$ to be a field and we also take into account the bigrading we introduced previously. 
\begin{theorem}
\label{thm:quasiisofor}
Let $A$ be an (resp., a unitary) $A_{\infty}$-algebra over $k$ provided with a compatible bigrading of $s$-th type, for some $s \in \NN_{0}$, and let $A_{\hbar}$ be a formal bigraded deformation of $A$. 
Let us consider $H^{\bullet}(A)$ provided with a model compatible with the bigrading of $A$ and $f_{\bullet} : H^{\bullet}(A) \rightarrow A$ a quasi-isomorphism of (resp., strictly unitary) 
$A_{\infty}$-algebras compatible with the corresponding bigradings. 
Then there exists a formal bigraded deformation $H^{\bullet}(A)_{\hbar}$ of $H^{\bullet}(A)$ and a quasi-isomorphism $\tilde{f}_{\bullet} : H^{\bullet}(A)_{\hbar} \rightarrow A_{\hbar}$ 
of formal bigraded deformations of (resp., strictly unitary) $A_{\infty}$-algebras, \textit{i.e.} such that the underlying morphism of (resp., strictly unitary) $A_{\infty}$-algebras over $k$ is compatible with the bigradings, 
and we have the commutative diagram 
\[     
\xymatrix
{
H^{\bullet}(A)_{\hbar} 
\ar[r]^(.6){\tilde{f}_{\bullet}}
\ar[d]
&
A_{\hbar}
\ar[d]
\\
H^{\bullet}(A)
\ar[r]^(.6){f_{\bullet}}
&
A
}
\]
where the vertical maps are the canonical projections. 
Moreover, all these possible structures of formal bigraded deformations on $H^{\bullet}(A)$ are quasi-isomorphic.
\end{theorem}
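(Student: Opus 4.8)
The plan is to repeat the obstruction-theoretic argument used in the proof of Theorem \ref{theorem:kadeish}, but now carried out over the base $k[\hbar]$ by means of the inverse system of finite test algebras $\{ k[\hbar]/(\hbar^{(N+1)}) \}_{N \in \NN}$ that defines a formal bigraded deformation. By definition, $A_{\hbar}$ amounts to the compatible family of (bigraded) deformations $A_{N} = A_{\hbar} \otimes_{k[\hbar]} k[\hbar]/(\hbar^{(N+1)})$, each of which is a (resp., unitary) $A_{\infty}$-algebra over the test algebra $k[\hbar]/(\hbar^{(N+1)})$ carrying a compatible bigrading of $s$-th type. First I would construct, by induction on $N$, a compatible family of models $H^{\bullet}(A)_{N}$ over $k[\hbar]/(\hbar^{(N+1)})$ together with quasi-isomorphisms $\tilde{f}^{(N)}_{\bullet} : H^{\bullet}(A)_{N} \rightarrow A_{N}$, and then pass to the inverse limit to obtain $H^{\bullet}(A)_{\hbar}$ and $\tilde{f}_{\bullet}$. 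The base case $N = 0$ is exactly the given datum: $k[\hbar]/(\hbar) = k$, $A_{0} = A$, and $(\{ \bar{m}_{n} \}_{n \in \NN}, f_{\bullet})$ is the chosen model supplied by Theorem \ref{theorem:kadeish}.

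For the inductive step I would build the structure maps $m_{n}^{\hbar}$ on $H^{\bullet}(A)[[\hbar]]^{\mathrm{bgr}}$ and the components $\tilde{f}_{n}$ as formal series in $\hbar$, extending them one $\hbar$-order at a time so that the Stasheff identities $\mathrm{SI}(n)$ and the morphism identities $\mathrm{MI}(n)$ hold modulo the next power of $\hbar$. At each stage this reduces to solving an equation of the shape \eqref{eq:ainfdos}: one forms the obstruction map, the precise analogue of the expression $U_{N+1}$ in \eqref{eq:ainftyalgebraobst} assembled from the deformed maps $m_{n}^{\hbar}$ of $A_{\hbar}$ and the already constructed lower corrections of the $\bar{m}_{\bullet}^{\hbar}$ and of the $\tilde{f}_{\bullet}$, and one needs it to be a coboundary in order to define the next correction of $\tilde{f}$ and to read off the next higher multiplication $\bar{m}^{\hbar}$ as (minus) its projection. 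The verification that this obstruction is a cocycle is the deformation analogue of Lemma \ref{lemma:ind}: the computation is the same bar-construction manipulation, with the $b_{i}$ and the induced coalgebra morphisms replaced by their $\hbar$-completed counterparts. All corrections can be chosen homogeneous of the prescribed bidegrees $(2-n)(s,-s+1)$ and $(1-n)(s,-s+1)$, and in the unital case the strict-unit conditions are preserved, exactly as in the proof of Theorem \ref{theorem:kadeish}.

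Solving the obstruction equation rests on the fact that $\tilde{f}_{1}$ reduces modulo $\hbar$ to the quasi-isomorphism $f_{1}$; since $A$ is flat, so that $H^{\bullet}(A)$ is projective, one checks as in Remark \ref{remark:quasi-isodefbis} that $\tilde{f}_{1}$ is a quasi-isomorphism over each $k[\hbar]/(\hbar^{(N+1)})$, hence induces an injection on cohomology, whence the obstruction cocycle, whose image under $\tilde{f}_{1}$ is a coboundary, is itself a coboundary. Once the morphism identity \eqref{eq:ainfdos} is arranged, the corresponding Stasheff identity follows automatically from $B \circ B = 0$ on the bar construction of $A_{\hbar}$ together with the injectivity of the induced coalgebra morphism, precisely as in Theorem \ref{theorem:kadeish}. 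Taking the inverse limit over $N$ produces the formal bigraded deformation $H^{\bullet}(A)_{\hbar}$ and the morphism $\tilde{f}_{\bullet}$; reducing modulo $\hbar$ recovers $f_{\bullet}$ and the model $\bar{m}_{\bullet}$, giving the required commutative square, and $\tilde{f}_{\bullet}$ is a quasi-isomorphism by Remark \ref{remark:quasi-isodefbis} since its zeroth $\hbar$-order component is $f_{1}$. For the final assertion, given two such deformations of $H^{\bullet}(A)$ both quasi-isomorphic to $A_{\hbar}$, I would construct a quasi-isomorphism between them by the same order-by-order lifting argument, starting from the identity of $H^{\bullet}(A)$ at $\hbar$-order zero.

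The main obstacle I anticipate is propagating the cocycle computation of Lemma \ref{lemma:ind} in the presence of a first multiplication $\bar{m}_{1}^{\hbar}$ that is no longer zero. In the classical statement the vanishing $\bar{m}_{1} = 0$ was used decisively at the end of the proof, where it forced the cocycle $a'$ to be itself zero; here $\bar{m}_{1}^{\hbar}$ is only divisible by $\hbar$, so that step must be replaced by an argument along the $\hbar$-adic filtration, with the base case modulo $\hbar$ being the original Kadeishvili situation and each higher $\hbar$-order correction controlled by the projectivity of $H^{\bullet}(A)$ and by the fact that $\tilde{f}_{1}$ is a quasi-isomorphism over every finite test algebra. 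Keeping track of the bigrading throughout, while routine, is what forces the systematic use of the completed tensor products and of the explicit description \eqref{eq:defo} of $\hbar$-topologically free modules.
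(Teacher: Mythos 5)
Your overall strategy coincides with the paper's: a double induction on arity and $\hbar$-order, an obstruction map analogous to \eqref{eq:ainftyalgebraobst}, recovery of the Stasheff identities from $B \circ B = 0$ together with injectivity of the induced coalgebra morphism, the commutative square by reduction modulo $\hbar$, and uniqueness by the same lifting argument. However, the proposal leaves unresolved precisely the two points where all the work lies, so as written there is a genuine gap. First, the deformation analogue of Lemma \ref{lemma:ind} (the paper's Lemma \ref{lemma:ind2}) is not merely ``the same bar-construction manipulation with $\hbar$-completed counterparts'': you correctly observe that the final step of Lemma \ref{lemma:ind} uses $\bar{m}_{1} = 0$ decisively and that the deformed $\tilde{\bar{m}}_{1}$ is now only divisible by $\hbar$, but you do not say how to get around this. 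The paper's resolution is that, working one $\hbar$-order at a time with the obstruction $U_{N}^{N'}$ assembled over carefully chosen index sets (the sets $\NN_{0,\leq N'}^{s}$ and $(\NN^{q+1,N'})^{*}$, which exclude exactly the unknown top-order terms), the element $a'$ that must be killed at order $\hbar^{N'}$ lives in $A'[1]$ where the zeroth-order differential $(b')_{1}^{0}$ is literally zero; so the original ``a cocycle for the zero differential that maps to a coboundary must vanish'' argument survives coefficientwise, using only an injectivity statement on cohomology over $k$ coming from $f_{1}$. Supplying this bookkeeping is the substance of the inductive step, not a routine transcription.

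Second, your justification that $\tilde{f}_{1}$ is a quasi-isomorphism is not valid as stated: Remark \ref{remark:quasi-isodefbis} only records that ``quasi-isomorphism of formal bigraded deformations'' is equivalent to ``$\tilde{f}_{1}$ is a quasi-isomorphism over $k[\hbar]$''; it does not prove that the latter follows from $f_{1}$ being a quasi-isomorphism. That implication is the paper's Lemma \ref{lemma:ind3quasiiso}, proved by a separate and fairly long induction on the $\hbar$-order establishing injectivity and surjectivity of the induced map in cohomology, using crucially that the differential of $H^{\bullet}(A)$ vanishes and that $f^{0}$ is a section of the projection from $\Ker(m_{1}^{0})$; projectivity of $H^{\bullet}(A)$ does not by itself yield it. Note also that you propose to use the quasi-isomorphism property of $\tilde{f}_{1}$ over each test algebra as the engine for solving the obstruction equations, which would force you to interleave the proof of this lemma with the construction; the paper avoids this by using only the injectivity of $f_{1}$ on cohomology over $k$ in the obstruction step and deferring the quasi-isomorphism statement to the very end.
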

\begin{proof}
The proof will follow a similar pattern to the one of Kadeishvili's theorem we recalled at the end of the previous section, but with some subtleties coming from the deformation part. 

Let us consider the following setting. 
We fix $N \in \NN$, and we shall only consider the underlying $A_{N}$-algebra structures of $A$ and $H^{\bullet}(A)$. 
Moreover, given $N' \in \NN$, by taking a quotient modulo $\hbar^{(N'+1)}$, we consider $A_{\hbar^{(N'+1)}} = A_{\hbar} \otimes_{k[\hbar]} k[\hbar]/(\hbar^{(N'+1)})$ a $k[\hbar]/(\hbar^{(N'+1)})$-deformation of $A$. 
We shall assume we have defined a bigraded $k[\hbar]/(\hbar^{N'})$-deformation $H^{\bullet}(A)_{\hbar^{N'}}$ of $H^{\bullet}(A)$ and a morphism $\{ \tilde{f}_{i} \}_{i \in \NN_{\leq N}}$ of $A_{N}$-algebras 
over $k[\hbar]/(\hbar^{N'})$ such that the diagram 
\[     
\xymatrix
{
H^{\bullet}(A)_{\hbar^{N'}} 
\ar[r]^(.6){(\tilde{f}_{\bullet})_{\bullet \in \NN_{\leq N}}}
\ar[d]
&
A_{\hbar^{N'}}
\ar[d]
\\
H^{\bullet}(A)
\ar[r]^(.6){(f_{\bullet})_{\bullet \in \NN_{\leq N}}}
&
A
}
\]
commutes. 
The bigrading assumption on the deformation $H^{\bullet}(A)_{\hbar^{N'}}$ means that we may write $\tilde{f}_{i}|_{H^{\bullet}(A)^{\otimes i}} = \sum_{j = 0}^{N'-1} \tilde{f}_{i}^{j} \hbar^{j}$, 
where $\tilde{f}_{i}^{j} : H^{\bullet}(A)^{\otimes i} \rightarrow A$  is a homogeneous map of bidegree $(1-i)(s,-s+1) + (j,-j)$, for $i \in \NN_{\leq N}$ and $j \in \NN_{0, \leq (N'-1)}$, 
where we recall that $\NN_{0,\leq p}$ denotes the subset of $\NN_{0}$ of nonnegative integers less than or equal to $p$. 
The previous commutativity means that $\tilde{f}_{i}^{0} = f_{i}$, for all $i \in \NN_{\leq N}$. 
If $\{ \tilde{m}_{i} \}_{i \in \NN_{\leq N}}$ and $\{ \tilde{\bar{m}}_{i} \}_{i \in \NN_{\leq N}}$ denote the structure maps of the $A_{N}$-algebras over $k[\hbar]/(\hbar^{(N'+1)})$ and over $k[\hbar]/(\hbar^{N'})$ given by 
$A_{\hbar^{(N'+1)}}$ and $H^{\bullet}(A)_{\hbar^{N'}}$, resp., we will write $\tilde{m}_{i}|_{A^{\otimes i}} = \sum_{j=0}^{N'} m_{i}^{j} \hbar^{j}$ 
and $\tilde{\bar{m}}_{i}|_{H^{\bullet}(A)^{\otimes i}} = \sum_{j=0}^{N'-1} \bar{m}_{i}^{j} \hbar^{j}$, for $i \in \NN_{\leq N}$, where $m_{i}^{j} : A^{\otimes i} \rightarrow A$ and 
$\bar{m}_{i}^{j} : H^{\bullet}(A)^{\otimes i} \rightarrow H^{\bullet}(A)$ are morphisms of bidegree 
$(2-i)(s,-s+1) + (j,-j)$, for $i \in \NN_{\leq N}$, and $j \in \NN_{0, \leq N'}$ or $j \in \NN_{0, \leq (N'-1)}$, respectively. 
Suppose moreover that we have defined homogeneous maps $\bar{m}_{i}^{N'} : H^{\bullet}(A)^{\otimes i} \rightarrow H^{\bullet}(A)$ of bidegree 
$(2-i)(s,-s+1) + (N',-N')$, for $i \in \NN_{\leq (N-1)}$, together with morphisms $\tilde{f}_{i}^{N'} : H^{\bullet}(A)^{\otimes i} \rightarrow A$ of bidegree $(1-i)(s,-s+1) + (N',-N')$, for $i = 1, \dots, N-1$ 
such that they provide a morphism of $A_{N-1}$-algebras between the reductions $H^{\bullet}(A)_{\hbar^{(N'+1)}}$ and $A_{\hbar^{(N'+1)}}$. 
In order to prove the theorem it suffices to show the following statement: we may extend the previously defined morphism from a (also to be defined) $k[\hbar]/(\hbar^{(N'+1)})$-deformation 
$H^{\bullet}(A)_{\hbar^{(N'+1)}}$ (extending $H^{\bullet}(A)_{\hbar^{N'}}$) to $A_{\hbar^{(N'+1)}}$, regarded as $A_{N}$-algebras over $k[\hbar]/(\hbar^{(N'+1)})$. 
Indeed, suppose that we proved the previous statement. 
We apply this procedure inductively as follows. 
First, for fixed $N = 1$, we prove it for any $N' \in \NN$ by induction. 
Then, each time we increase the value of $N$ in one, we use only part of the already constructed structure maps $\{ \tilde{\bar{m}}_{i} \}_{i \in \NN_{\leq N}}$ 
and morphisms $\{ \tilde{f}_{i} \}_{i \in \NN_{\leq N}}$: those involving terms $\bar{m}_{i}^{j}$ and  $\tilde{f}_{i}^{j}$, with $j \leq (N'-1)$, and apply the previous statement. 

Let us now prove the statement. 
We consider the map from $H^{\bullet}(A)^{\otimes N}$ to $A$ of bidegree $(2-N)(s,-s+1)+(N',-N')$ given by 
\begin{equation}
\label{eq:ainftyalgebraobstdefo}
\begin{split}
   U_{N}^{N'} = &\sum_{(r,s,t) \in \mathcal{I}_{N}} \sum_{j \in \NN_{0,\leq N'}^{s}} (-1)^{r + s t}  \tilde{f}_{r + 1 + t}^{j} \circ 
  (\mathrm{id}_{H^{\bullet}(A)}^{\otimes r} \otimes \bar{m}_{s}^{N'-j} \otimes \mathrm{id}_{H^{\bullet}(A)}^{\otimes t}) 
   \\
   &- \sum_{q \in \NN} \sum_{\bar{i} \in \NN^{q, N}} \sum_{\bar{j} \in (\NN^{q+1, N'})^{*}} (-1)^{w} m_{q}^{j_{0}} \circ (\tilde{f}_{i_{1}}^{j_{1}} \otimes \dots \otimes \tilde{f}_{i_{q}}^{j_{q}}),
\end{split}
\end{equation} 
where $\bar{j} = (j_{0}, \dots, j_{q})$, 
\[    \NN_{0,\leq N'}^{s} = \begin{cases} \NN_{0, \leq N'}, &\text{if $s \neq N$},
                                            \\ 
                                            \NN_{\leq N'}, &\textit{else},
                                           \end{cases} 
\] 
and
\[    (\NN^{q+1, N'})^{*} = \begin{cases} \NN^{q+1, N'}, &\text{if $q \neq 1$},
                                            \\ 
                                            \NN^{2, N'} \setminus \{ (0,N') \}, &\textit{else}.
                                           \end{cases} 
\] 
By Lemma \ref{lemma:ind2} we have that $m_{1}^{0} \circ U_{N}^{N'}$ vanishes. 
We define $\bar{m}_{N}^{N'}$ as minus the composition of $U_{N}^{N'}$ and the canonical projection $\pi : \mathrm{Ker}(m_{1}) \rightarrow H^{\bullet}(A)$. 
By the very definition of $U_{N}^{N'}$ we see that $U_{N}^{N'} + f_{1}^{0} \circ \bar{m}_{N}^{N'} = (\mathrm{id}_{\mathrm{Ker}(m_{1})} - f_{1}^{0} \circ \pi) \circ U_{N}^{N'}$ lies in the image of $m_{1}^{0}$, 
so there exists a morphism $f_{N}^{N'} : H^{\bullet}(A)^{\otimes N} \rightarrow A$ (necessarily of bidegree $(1-N)(s,-s+1)+(N',-N')$) such that 
\begin{equation}
\label{eq:ainfdosdos}
    U_{N}^{N'} + f_{1}^{0} \circ \bar{m}_{N}^{N'} = m_{1}^{0} \circ f_{N}^{N'},
\end{equation} 
which is just the term of the restriction to $H^{\bullet}(A)^{\otimes N}$ of the Stasheff identity on morphisms $\mathrm{MI}(N)$ of $A_{N}$-algebras over $k[\hbar]/(\hbar^{(N'+1)})$ which is multiplied by $\hbar^{N'}$. 

If we assumed $A$ is unitary, then a trivial verification shows that we may choose $\tilde{f}_{N}(\bar{a}_{1}, \dots, \bar{a}_{N})$ satisfying that it vanishes 
if there exists $j \in \{1, \dots, N \}$ such that $\bar{a}_{j} = 1_{H^{\bullet}(A)}$. 
We have that $\tilde{\bar{m}}_{N}$ together with the previously considered multiplications $\tilde{\bar{m}}_{1}, \dots, \tilde{\bar{m}}_{N-1}$ satisfy the Stasheff identity $\mathrm{SI}(N-1)$ 
as $A_{(N-1)}$-algebras over $k[\hbar]/(\hbar^{(N'+1)})$. 
Indeed, these Stasheff identities can be easily proved by considering the bar constructions of $H^{\bullet}(A)_{\hbar^{(N'+1)}}$ and of $A_{\hbar^{(N'+1)}}$ up to tensor degree $N$ with their corresponding coderivations $\tilde{\bar{B}}_{N}$ and $\tilde{B}_{N}$, constructed from $\tilde{\bar{m}}_{1}, \dots, \tilde{\bar{m}}_{N}$ and from $\tilde{m}_{1}, \dots, \tilde{m}_{N}$ respectively. 
The morphism identities up to tensor degree $N$ are tantamount to show that the morphism $\tilde{F}_{N}$ induced by $\tilde{f}_{1}, \dots, \tilde{f}_{N}$ 
between the bar constructions of $H^{\bullet}(A)_{\hbar^{(N'+1)}}$ and of $A_{\hbar^{(N'+1)}}$ up to tensor degree $N$ satisfies that $\tilde{B}_{N} \circ \tilde{F}_{N} = \tilde{F}_{N} \circ \tilde{\bar{B}}_{N}$. 
The fact that $A_{\hbar^{(N'+1)}}$ is an $A_{N}$-algebra over $k[\hbar]/(\hbar^{(N'+1)})$ tells us that $\tilde{B}_{N} \circ \tilde{B}_{N} = 0$. 
Since $\tilde{f}_{1}$ is injective, for $\tilde{f}_{1}^{0}$ is also by definition (this follows from a straightforward computation), which in turn implies that $\tilde{F}_{N}$ is injective (see \cite{HR}, Prop. 2.4.2), 
we get that $\tilde{\bar{B}}_{N} \circ \tilde{\bar{B}}_{N} = 0$, which in turn implies the Stasheff identity 
$\mathrm{SI}(N)$ for $H^{\bullet}(A)_{\hbar^{(N'+1)}}$. 

If $A$ is assumed to be unitary, a trivial calculation shows that $\tilde{\bar{m}}_{N}(\bar{a}_{1}, \dots, \bar{a}_{N})$ vanishes 
if there exists $j \in \{1, \dots, N \}$ such that $\bar{a}_{j} = 1_{H^{\bullet}(A)}$. 
Moreover, by Lemma \ref{lemma:ind3quasiiso}, the map $\tilde{f}_{1}$ is a quasi-isomorphism. 
The last statement of the theorem also follows from the mentioned Lemma and the theorem is thus proved. 
\end{proof}

We now state the lemmas required in the proof of the previous theorem. 
The proof of the first of those follows a similar spirit to the one of Lemma \ref{lemma:ind}, though there are some subtleties and differences, and we find it is not immediate, so we give it completely. 
\begin{lemma}
\label{lemma:ind2}
Let $N \in \NN$, and let $(A,\{ m_{i} \}_{i \in \NN_{\leq N}})$ and $(A',\{ m'_{i} \}_{i \in \NN_{\leq N}})$ be two $A_{N}$-algebras such that $m'_{1} = 0$. 
Let $N' \in \NN$, and let $A_{\hbar^{(N'+1)}}$ be a bigraded $k[\hbar]/(\hbar^{(N'+1)})$-deformation of $A$, 
and $A'_{\hbar^{N'}}$ a bigraded $k[\hbar]/(\hbar^{N'})$-deformation of $A'$. 
Suppose given a morphism $\{ \tilde{f}_{i} \}_{i \in \NN_{\leq N}}$ of $A_{N}$-algebras over $k[\hbar]/(\hbar^{N'})$ such that the diagram 
\[     
\xymatrix
{
A'_{\hbar^{N'}} 
\ar[r]^{(\tilde{f}_{\bullet})_{\bullet \in \NN_{\leq N}}}
\ar[d]
&
A_{\hbar^{N'}}
\ar[d]
\\
A'
\ar[r]^{(f_{\bullet})_{\bullet \in \NN_{\leq N}}}
&
A
}
\]
commutes.
Suppose $f_{1}$ induces an injective map between the corresponding cohomology groups.  
Define the map from $(A')^{\otimes N}$ to $A$ given by 
\begin{equation*}
\begin{split}
   U_{N}^{N'} = &\sum_{(r,s,t) \in \mathcal{I}_{N}} \sum_{j \in \NN_{0,\leq (N')}^{s}} (-1)^{r + s t}  \tilde{f}_{r + 1 + t}^{j} \circ 
  (\mathrm{id}_{A'}^{\otimes r} \otimes (m')_{s}^{N'-j} \otimes \mathrm{id}_{A'}^{\otimes t}) 
   \\
   &- \sum_{q \in \NN} \sum_{\bar{i} \in \NN^{q, N}} \sum_{\bar{j} \in (\NN^{q+1, N'})^{*}} (-1)^{w} m_{q}^{j_{0}} \circ (\tilde{f}_{i_{1}}^{j_{1}} \otimes \dots \otimes \tilde{f}_{i_{q}}^{j_{q}}),
\end{split}
\end{equation*} 
where $\bar{j} = (j_{0}, \dots, j_{q})$, 
\[    \NN_{0,\leq N'}^{s} = \begin{cases} \NN_{0, \leq N'}, &\text{if $s \neq N$},
                                            \\ 
                                            \NN_{\leq N'}, &\textit{else},
                                           \end{cases} 
\] 
and
\[    (\NN^{q+1, N'})^{*} = \begin{cases} \NN^{q+1, N'}, &\text{if $q \neq 1$},
                                            \\ 
                                            \NN^{2, N'} \setminus \{ (0,N') \}, &\textit{else}.
                                           \end{cases} 
\] 
Then $m_{1}^{0} \circ U_{N}^{N'}$ vanishes. 
\end{lemma}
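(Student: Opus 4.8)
The plan is to adapt the bar-construction proof of Lemma~\ref{lemma:ind} to the deformation setting, working over $k[\hbar]/(\hbar^{(N'+1)})$ and extracting the coefficient of $\hbar^{N'}$ at the end. First I would extend the given data to order $\hbar^{N'}$: choose homogeneous maps $(m')_i^{N'}$ and $\tilde f_i^{N'}$ of the appropriate bidegrees (for $i \in \NN_{\leq N}$) so as to obtain, by the recipe of the bar construction, a coderivation $\tilde B'_N$ on $B_N(A'_{\hbar^{(N'+1)}})$, the coderivation $\tilde B_N$ built from the given $\{\tilde m_i\}_{i \in \NN_{\leq N}}$ on $B_N(A_{\hbar^{(N'+1)}})$, and a morphism of graded coalgebras $\tilde F_N$ between them, all over $k[\hbar]/(\hbar^{(N'+1)})$. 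The key point is that $U_N^{N'}$ does not involve the ``unknowns'' $(m')_N^{N'}$ and $\tilde f_N^{N'}$ (these are exactly the terms excluded by the conditions defining $\NN_{0,\leq N'}^{s}$ when $s = N$ and $(\NN^{q+1,N'})^{*}$ when $q = 1$), so the quantity we must control is independent of the arbitrary choices. Setting $\tilde U_N^{N'} = s_A \circ U_N^{N'} \circ (s_{A'}^{\otimes N})^{-1}$, I would verify that $\tilde U_N^{N'}$ equals the coefficient of $\hbar^{N'}$ of $\pi_1 \circ (\tilde F_N \circ \tilde B'_N - \tilde B_N \circ \tilde F_N) \circ \iota'_N$, corrected by the two bar terms corresponding to $f_1 \circ (m')_N^{N'}$ and $m_1^0 \circ \tilde f_N^{N'}$, which is the exact analogue of the closed formula for $\tilde U_{N+1}$ used in Lemma~\ref{lemma:ind}.

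Next I would compose with $\tilde b_1^0 = -\, s_A \circ m_1^0 \circ s_A^{-1}$ and run the same chain of substitutions as in Lemma~\ref{lemma:ind}, now valid only modulo the relevant powers of $\hbar$. The two correction terms vanish because $m_1^0 \circ f_1 = 0$ (the image of $f_1$ lies in $\Ker(m_1^0)$) and $m_1^0 \circ m_1^0 = 0$ (this is $\mathrm{SI}(1)$ at order $\hbar^0$), exactly as $b_1 \circ F_{N+1,1} = 0$ and $b_1 \circ b_1 = 0$ were used there. Since $\{\tilde f_i\}_{i \in \NN_{\leq N}}$ is a morphism over $k[\hbar]/(\hbar^{N'})$, the defect $(\tilde F_N \circ \tilde B'_N - \tilde B_N \circ \tilde F_N) \circ \iota'_p$ vanishes modulo $\hbar^{N'}$ for $p \in \NN_{\leq N}$, hence has image in tensor degree one modulo $\hbar^{N'}$; this lets me replace $\tilde b_1^0 \circ \pi_1$ by $\tilde B_N$ at order $\hbar^{N'}$, use $\tilde B_N \circ \tilde B_N \equiv 0 \pmod{\hbar^{(N'+1)}}$ (because $A_{\hbar^{(N'+1)}}$ is an $A_N$-algebra over $k[\hbar]/(\hbar^{(N'+1)})$), swap $\tilde B_N \circ \tilde F_N$ back to $\tilde F_N \circ \tilde B'_N$ modulo $\hbar^{N'}$, and finally use $m'_1 = 0$ together with \cite{LH}, Lemme B.1.1, to reduce the whole expression, at order $\hbar^{N'}$, to $F_{N,1}^0 \circ a'$, where $a'$ is the coefficient of $\hbar^{N'}$ of $\pi'_1 \circ \tilde B'_N \circ \tilde B'_N \circ \iota'_N$, a $k$-linear map with values in $A'$.

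The conclusion then mirrors the end of Lemma~\ref{lemma:ind}: because $m'_1 = 0$, the element $a'(\omega)$ is automatically an $m'_1$-cocycle for every $\omega$, while $f_1 \circ a'$ coincides (up to sign) with $m_1^0 \circ U_N^{N'}$ and so has image in $\im(m_1^0)$; since $f_1$ induces an injection on cohomology and $H^{\bullet}(A') = A'$, the class of $a'(\omega)$ vanishes, whence $a'(\omega) = 0$, and therefore $m_1^0 \circ U_N^{N'} = 0$. The main obstacle is precisely the $\hbar$-order bookkeeping, which is genuinely more delicate than in Lemma~\ref{lemma:ind}: there $b'_1$ vanishes identically and every bar identity is exact, whereas here only the order-$\hbar^0$ part of $\tilde b'_1$ is zero and the identities $\tilde B \circ \tilde B = 0$ and the morphism identity hold only modulo $\hbar^{N'}$ or $\hbar^{(N'+1)}$. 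The truly delicate point is to check that the error terms produced when passing between $\tilde b_1^0 \circ \pi_1$ and $\tilde B_N$ --- namely the tensor-degree $\geq 2$ part of the morphism defect at order $\hbar^{N'}$ --- do not contribute; this requires the lower-level ($p < N$) Stasheff and morphism identities to hold at order $\hbar^{N'}$ as well, which is available from the inductive construction of Theorem~\ref{thm:quasiisofor} in which the lemma is invoked, and relies on $U_N^{N'}$ having been arranged to be independent of the arbitrary order-$\hbar^{N'}$ extensions.
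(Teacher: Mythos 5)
Your proposal follows the paper's proof essentially step for step: the same closed formula expressing $\tilde{U}_{N}^{N'}$ as the $\hbar^{N'}$-coefficient of $\pi_{1}\circ(\tilde{F}_{N}\circ\tilde{B}'_{N}-\tilde{B}_{N}\circ\tilde{F}_{N})\circ\iota'_{N}$ corrected by the two terms $f_{1}\circ (m')_{N}^{N'}$ and $m_{1}^{0}\circ\tilde{f}_{N}^{N'}$, the same chain of substitutions using $m_{1}^{0}\circ m_{1}^{0}=0$, $m_{1}^{0}\circ f_{1}=0$, $\tilde{B}_{N}\circ\tilde{B}_{N}=0$ and Lemme B.1.1 of \cite{LH}, and the same concluding cocycle-plus-injectivity argument exploiting $m'_{1}=0$. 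One small clarification: the order-$\hbar^{N'}$ components $(m')_{i}^{N'}$ and $\tilde{f}_{i}^{N'}$ for $i<N$ should be treated not as arbitrary choices but as given data satisfying the Stasheff and morphism identities at that order (as you yourself observe at the end, this is precisely what kills the tensor-degree $\geq 2$ part of the defect), with only the arity-$N$ components chosen freely.
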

\begin{proof}
Choose any homogeneous maps $(m')_{N}^{N'} : (A')^{\otimes N} \rightarrow A'$ of bidegree $(2- N) (s,-s+1) + (N',-N')$ and $\tilde{f}_{N}^{N'} : (A')^{\otimes N} \rightarrow A$ of bidegree 
$(1- N) (s,-s+1) + (N',-N')$. 
By the explanations on the bar construction of $A_{N}$-algebras given at the third paragraph of Subsection \ref{subsection:kad}, the maps $\{ \tilde{m}'_{i} \}_{i \in \NN_{\leq N}}$ given by $\tilde{m}'_{i}|_{A'} = \sum_{j=0}^{N} (m')_{i}^{j} \hbar^{j}$ induce a coderivation $\tilde{B}'_{N}$ on the tensor coalgebra $B_{N}(A'_{\hbar^{(N'+1)}})$. 
As $A_{\hbar^{(N'+1)}}'$ is not a $A_{N}$-algebra over $k[\hbar]/(\hbar^{(N'+1)})$ this is in fact an abuse of notation, ans we should probably write instead something like 
$\bar{T}^{c,\leq N}_{k[\hbar]/(\hbar^{(N'+1)})}(A'_{\hbar^{(N'+1)}}[1])$, 
but we will allow our notation for the sake of simplicity, and because it is completely clear that we are not assuming that $\tilde{B}'_{N} \circ \tilde{B}'_{N}$ vanishes. 
However, since $A'_{\hbar^{(N'+1)}}$ is an $A_{N}$-algebra over $k[\hbar]/(\hbar^{N'})$, $(\pi')_{1}^{j} \circ \tilde{B}'_{N} \circ \tilde{B}'_{N} \circ (\iota')_{p}^{0}$ vanishes for $p = 1, \dots, N-1$ and $j = 0, \dots, N'-1$, 
where $(\iota')_{p}^{0} : (A'[1])^{\otimes p} \rightarrow B_{N}(A'_{\hbar^{(N'+1)}})$ denotes the canonical inclusion, and $(\pi')_{1}^{j}$, for $j = 0, \dots, N'$, denotes the composition of the canonical projections 
$B_{N}(A'_{\hbar^{(N'+1)}}) \rightarrow A'_{\hbar^{(N'+1)}}$ and $A'_{\hbar^{(N'+1)}} \rightarrow A'$, where the latter map is given by $\sum_{l=0}^{N'} a_{l} \hbar^{l} \mapsto a_{j}$. 
We use the analogous notation without primes for $A$. 
By also following the recipe on the bar construction of a morphism of $A_{N}$-algebras given at the second paragraph before Lemma \ref{lemma:ind}, the maps $\{ \tilde{f}_{i} \}_{i \in \NN_{\leq N}}$ induce a morphism of graded coalgebras $\tilde{F}_{N}$ 
from $B_{N}(A'_{\hbar^{(N'+1)}})$ to $B_{N}(A_{\hbar^{(N'+1)}})$. 
Define $\tilde{U}_{N}^{N'}$ as $s_{A} \circ U_{N}^{N'} \circ (s_{A'}^{\otimes N})^{-1}$. 
It is direct to show that 
\begin{equation*}
\begin{split}
  \tilde{U}_{N}^{N'} = &\sum_{(r,s,t) \in \mathcal{I}_{N}} \sum_{j \in \NN_{0,\leq (N')}^{s}} \tilde{F}_{N,r + 1 + t}^{j} \circ 
  (\mathrm{id}_{A'}^{\otimes r} \otimes (b')_{s}^{N'-j} \otimes \mathrm{id}_{A'}^{\otimes t}) 
   \\
   &- \sum_{q \in \NN} \sum_{\bar{i} \in \NN^{q, N}} \sum_{\bar{j} \in (\NN^{q+1, N'})^{*}} b_{q}^{j_{0}} \circ (\tilde{F}_{N,i_{1}}^{j_{1}} \otimes \dots \otimes \tilde{F}_{N,i_{q}}^{j_{q}}).
\end{split}
\end{equation*} 
It is clear that that the vanishing of $m_{1}^{0} \circ U_{N}^{N'}$ is tantamount to the vanishing of $b_{1}^{0} \circ \tilde{U}_{N}^{N'}$. 
Moreover, it is easy to see that 
\[     \tilde{U}_{N}^{N'} = \pi_{1}^{N'} \circ (\tilde{F}_{N+1} \circ \tilde{B}'_{N+1} - \tilde{B}_{N+1} \circ \tilde{F}_{N+1}) \circ (\iota')_{N}^{0} - \tilde{F}_{N,1}^{0} \circ (b')_{N}^{N'} + b_{1}^{0} \circ \tilde{F}_{N,N}^{N'}.     \]
Then 
\begin{align*}     
b_{1}^{0} \circ \tilde{U}_{N}^{N'} &= b_{1}^{0} \circ \pi_{1}^{N'} \circ (\tilde{F}_{N} \circ \tilde{B}'_{N} - \tilde{B}_{N} \circ \tilde{F}_{N}) \circ (\iota')_{N}^{0}     
\\
&= \tilde{B}_{N} \circ (\tilde{F}_{N} \circ \tilde{B}'_{N} - \tilde{B}_{N} \circ \tilde{F}_{N}) \circ (\iota')_{N}^{0}
\\
&= \tilde{B}_{N} \circ \tilde{F}_{N} \circ \tilde{B}'_{N}  \circ (\iota')_{N}^{0},
\end{align*}
where we have used in the first equality that $b_{1}^{0} \circ b_{1}^{0} = 0$ and $b_{1}^{0} \circ \tilde{F}_{N,1}^{0} = 0$, for $m_{1}^{0} \circ m_{1}^{0} = 0$ and $m_{1}^{0} \circ \tilde{f}_{1}^{0} = 0$, 
and in the second equality that the image of $(\tilde{F}_{N} \circ \tilde{B}'_{N} - \tilde{B}_{N} \circ \tilde{F}_{N}) \circ (\iota')_{N}^{0}$ is included in the image of $\iota_{1}^{0} \hbar^{N'}$, 
by the hypothesis that $\{ \tilde{f}_{i} \}_{i \in \NN_{\leq N}}$ is a morphism of $A_{N}$-algebras over $k[\hbar]/(\hbar^{N'})$. 
In the last equality we have used that $\tilde{B}_{N} \circ \tilde{B}_{N} = 0$, for $A_{\hbar^{(N'+1)}}$ is an $A_{N}$-algebra. 
Since the image of $\tilde{B}'_{N}  \circ (\iota')_{N}^{0}$ is included in $B_{N}(A')$, because $(b')_{1}^{0}$ vanishes, the restriction to $B_{N}(A'_{\hbar^{N'}})$ of the morphism $\tilde{F}_{N}$ from $B_{N}(A'_{\hbar^{(N'+1)}})$ to $B_{N}(A_{\hbar^{(N'+1)}})$ coincides with the morphism $\tilde{F}_{N}$ from $B_{N}(A'_{\hbar^{N'}})$ to $B_{N}(A_{\hbar^{N'}})$, and 
$\tilde{F}_{N} \circ \tilde{B}'_{N} = \tilde{B}_{N} \circ \tilde{F}_{N}$, we see that $b_{1}^{0} \circ \tilde{U}_{N}^{N'}$ coincides with 
\[      \tilde{B}_{N} \circ \tilde{F}_{N} \circ \tilde{B}'_{N}  \circ (\iota')_{N}^{0} = \tilde{F}_{N} \circ \tilde{B}'_{N} \circ \tilde{B}'_{N}  \circ (\iota')_{N}^{0}.     \]
Note that $\tilde{B}'_{N} \circ \tilde{B}'_{N}  \circ (\iota')_{N}^{0} = \pi_{1}^{N'} \circ \tilde{B}'_{N} \circ \tilde{B}'_{N}  \circ (\iota')_{N}^{0}$, 
because $A'_{\hbar^{N'}}$ is an $A_{N}$-algebra over $k[\hbar]/(\hbar^{N'})$ (see \cite{LH}, Lemme B.1.1, a), 
and thus $b_{1}^{0} \circ \tilde{U}_{N}^{N'}$ coincides with 
\[      \tilde{F}_{N,1}^{N'} \circ (\pi_{1}^{N'} \circ \tilde{B}'_{N} \circ \tilde{B}'_{N}  \circ (\iota')_{N}^{0}).     \]
Choose any $\omega \in (A'[1])^{\otimes N}$, and consider $a' = (\pi_{1}^{N'} \circ \tilde{B}'_{N} \circ \tilde{B}'_{N}  \circ (\iota')_{N}^{0}) (\omega)$. 
It is a cocycle in $A'[1]$, because $(b')_{1}^{0}$ vanishes. 
As $\tilde{F}_{N,1}^{N'}$ induces an injection between the cohomology groups, and the image of $a'$ under $\tilde{F}_{N,1}^{N'}$ is the coboundary $(b_{1}^{0} \circ \tilde{U}_{N}^{N'})(\omega)$, 
$a'$ has to be itself a coboundary, but as $(b')_{1}^{0}$ is zero, this means that $a'$ vanishes, so the same occurs to $(b_{1}^{0} \circ \tilde{U}_{N}^{N'})(\omega)$, for $\omega$ arbitrary. 
The lemma is proved. 
\end{proof}

The following result is also required in the proof of the previous theorem. 
We believe that it should be well-known among the experts, but surprisingly we could not find any proof in the literature whatsoever. 
\begin{lemma}
\label{lemma:ind3quasiiso}
Let $(M,d_{M})$ be a dg module over $k$, considered as an $A_{1}$-algebra, and provided with a compatible bigrading of $s$-th type, for some $s \in \NN_{0}$. 
Let $M_{\hbar}$ be a formal bigraded deformation of $(M,d_{M})$, which is an $A_{1}$-algebra over $k[\hbar]$ as well. 
Consider the cohomology $H^{\bullet}(M)$ of $(M,d_{M})$, and take $f : H^{\bullet}(M) \rightarrow M$ the composition of a section of the canonical projection $\mathrm{Ker}(d_{M}) \rightarrow H^{\bullet}(M)$, which we suppose it exists,  
with the canonical inclusion $\Ker(d_{M}) \rightarrow M$. 
Suppose moreover we have $H^{\bullet}(M)_{\hbar}$ a formal bigraded deformation of $H^{\bullet}(M)$, which is just an $A_{1}$-algebra over $k[\hbar]$,  
together with a morphism $\tilde{f} : H^{\bullet}(M)_{\hbar} \rightarrow M_{\hbar}$ of complexes over $k[\hbar]$ such that 
\[     
\xymatrix
{
H^{\bullet}(M)_{\hbar} 
\ar[r]^(.6){\tilde{f}}
\ar[d]
&
M_{\hbar}
\ar[d]
\\
H^{\bullet}(M)
\ar[r]^(.6){f}
&
M
}
\]
commutes. 
Then $\tilde{f}$ is a quasi-isomorphism (of complexes over $k[\hbar]$). 
\end{lemma}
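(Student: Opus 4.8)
The plan is to deduce everything from the single observation that the reduction of $\tilde{f}$ modulo $\hbar$ is precisely the map $f$, combined with a completeness argument in the variable $\hbar$. First I would record what the reductions look like. Since $M_{\hbar}$ and $H^{\bullet}(M)_{\hbar}$ are $\hbar$-topologically free, both are bigraded complete and $\hbar$-torsion-free, and their differentials $m_{1}^{\hbar} = \sum_{j \geq 0} m_{1}^{j} \hbar^{j}$ and $\bar{m}_{1}^{\hbar} = \sum_{j \geq 0} \bar{m}_{1}^{j} \hbar^{j}$ are $k[\hbar]$-linear. By the defining property of a formal bigraded deformation one has $m_{1}^{0} = d_{M}$, and since the $A_{1}$-structure underlying the cohomology $H^{\bullet}(M)$ has vanishing differential, $\bar{m}_{1}^{0} = 0$. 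Consequently $M_{\hbar} \otimes_{k[\hbar]} k = (M, d_{M})$, $H^{\bullet}(M)_{\hbar} \otimes_{k[\hbar]} k = (H^{\bullet}(M), 0)$, and $\tilde{f} \otimes_{k[\hbar]} \mathrm{id}_{k} = f$ by the commutativity of the square in the statement.

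Next I would check that $f$ itself is a quasi-isomorphism. Regarding $H^{\bullet}(M)$ as a complex with zero differential, every element is a cocycle, and $f$ sends $x \in H^{\bullet}(M)$ to a representative $f(x) \in \Ker(d_{M})$ of the class $x$, because $f$ is (the inclusion composed with) a section of $\Ker(d_{M}) \rightarrow H^{\bullet}(M)$. Hence the induced map $H^{\bullet}(f)$ is the identity of $H^{\bullet}(M)$, so $f$ is a quasi-isomorphism.

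I would then pass to the mapping cone $C$ of $\tilde{f}$, whose underlying bigraded $k[\hbar]$-module is the (shifted) direct sum of $H^{\bullet}(M)_{\hbar}$ and $M_{\hbar}$: being a finite direct sum of $\hbar$-topologically free modules, $C$ is again $\hbar$-topologically free, in particular bigraded complete and $\hbar$-torsion-free, and $C \otimes_{k[\hbar]} k$ is the cone of $f$, which is acyclic by the previous paragraph. Since $\tilde{f}$ is a quasi-isomorphism of complexes over $k[\hbar]$ if and only if $C$ is acyclic, it remains to prove the following general fact: an $\hbar$-topologically free complex $C$ whose reduction $C \otimes_{k[\hbar]} k$ is acyclic is itself acyclic. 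This is the heart of the argument, and I would prove it by successive approximation in $\hbar$. Given a homogeneous cocycle $z \in C$ (for the total grading $p+q$, which $\hbar$ preserves), acyclicity of $C \otimes_{k[\hbar]} k$ yields a homogeneous $w_{0}$ with $z - d_{C} w_{0} \in \hbar C$, say $z - d_{C} w_{0} = \hbar z_{1}$; applying $d_{C}$ and using $\hbar$-torsion-freeness shows $z_{1}$ is again a cocycle, so the process iterates and produces homogeneous elements $w_{i}$ with $z - d_{C}(\sum_{i=0}^{N-1} \hbar^{i} w_{i}) \in \hbar^{N} C$ for every $N$. Bigraded completeness then lets me form $w = \sum_{i \geq 0} \hbar^{i} w_{i}$, and continuity of $d_{C}$ gives $d_{C} w = z$, so $z$ is a coboundary.

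The step I expect to be the main obstacle is this last one, namely making the successive-approximation argument fully rigorous in the \emph{bigraded} complete setting: one must check that the lifts $w_{i}$ can be chosen homogeneous of the appropriate total degree, that the remainders $z_{i}$ stay cocycles — which is exactly where $\hbar$-torsion-freeness enters — and that the series $\sum_{i \geq 0} \hbar^{i} w_{i}$ converges in $C$ and commutes with $d_{C}$, for which the bigraded completeness of $C$ (rather than mere $\hbar$-adic completeness as a plain $k[\hbar]$-module) is what is needed. An alternative to the explicit convergence argument would be to invoke the Milnor $\lim^{1}$ exact sequence for the tower $\{ C \otimes_{k[\hbar]} k[\hbar]/(\hbar^{N}) \}_{N}$, all of whose cohomologies vanish, so that both the $\lim^{1}$ term and the inverse limit of cohomologies are zero; but the direct approximation seems cleaner and avoids verifying Mittag-Leffler-type hypotheses in the bigraded category.
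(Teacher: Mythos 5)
Your proposal is correct, but it takes a genuinely different route from the paper. The paper argues by induction on the order of the truncation: it shows that each $\tilde{f}^{\leq N}$ induces a quasi-isomorphism between the reductions modulo $\hbar^{N+1}$, verifying injectivity and surjectivity of the induced map on cohomology by explicit element-wise manipulations of the component maps $f^{i}$, $d^{i}$, $\bar{d}^{i}$ (repeatedly using that $f^{0}$ is injective with image a complement of $\im(d^{0})$ inside $\Ker(d^{0})$, and reindexing double sums). You instead pass to the mapping cone of $\tilde{f}$ and reduce everything to the general principle that an $\hbar$-torsion-free, bigraded complete complex whose reduction modulo $\hbar$ is acyclic is itself acyclic, proved by successive approximation. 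Your route is shorter and isolates a reusable completeness lemma of graded Nakayama type; it also directly treats the complete object, whereas the paper's induction only yields the finite-order statements and leaves the passage to the limit implicit ("it is easy to see that the statement implies the lemma"). What the paper's version buys is precisely that finite-order refinement, which is the form in which the lemma is invoked inside the inductive proof of Theorem \ref{thm:quasiisofor}. The one place where you should be a bit more precise than "homogeneous for the total grading": since the bigraded completion is a direct sum over bidegrees of products over powers of $\hbar$ (and $\hbar$ has bidegree $(-1,1)$, hence total degree $0$), you should run the approximation on a \emph{bihomogeneous} cocycle $z$ of bidegree $(p,q)$, so that $w_{i}$ lives in the component of bidegree $(p-a+i,\, q-b-i)$ (with $(a,b)$ the bidegree of the differential) and the series $\sum_{i}\hbar^{i}w_{i}$ visibly lies in a single $(p-a,q-b)$-component of the completion; fixing only the total degree does not by itself guarantee convergence. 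Since the differential is bihomogeneous and every cocycle is a finite sum of bihomogeneous ones, this is a harmless adjustment, and the continuity of $d_{C}$ follows from its $k[\hbar]$-linearity together with $d_{C}(\hbar^{N}C)\subseteq\hbar^{N}C$.
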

\begin{proof}
Before we begin the proof let us set some notation it will be useful. 
Let us write $\tilde{f}|_{M} = \sum_{i \in \NN_{0}} f^{i} \hbar^{i}$. 
Accordingly, if we denote by $\tilde{d}$ and $\bar{d}$ the differentials of $M_{\hbar}$ and $H^{\bullet}(M)_{\hbar}$, respectively, we have the expressions 
$\tilde{d}|_{M} = \sum_{i \in \NN_{0}} d^{i} \hbar^{i}$ and $\bar{d}|_{M} = \sum_{i \in \NN_{0}} \bar{d}^{i} \hbar^{i}$, where we recall that $\bar{d}^{0} = 0$. 
For an element $\tilde{m} \in M_{\hbar}$ of the form $\sum_{i \in \NN_{0}} m_{i} \hbar^{i}$, and $N \in \NN_{0}$, we shall denote by $\tilde{m}_{\leq N}$ the finite sum 
$\sum_{i =0}^{N} m_{i} \hbar^{i}$ in the quotient $M_{\hbar} \otimes_{k[\hbar]} k[\hbar]/(\hbar^{N+1})$. 
Analogously, we shall write $\tilde{f}^{\leq N}$ the finite sum $\sum_{i = 0}^{N} f^{i} \hbar^{i}$, which is a morphism of complexes from 
$H^{\bullet}(M)_{\hbar} \otimes_{k[\hbar]} k[\hbar]/(\hbar^{N+1})$ to $M_{\hbar} \otimes_{k[\hbar]} k[\hbar]/(\hbar^{N+1})$, and the same for $\tilde{d}^{\leq N}$ and $\bar{d}^{\leq N}$, 
as differentials of $M_{\hbar} \otimes_{k[\hbar]} k[\hbar]/(\hbar^{N+1})$ and $H^{\bullet}(M)_{\hbar} \otimes_{k[\hbar]} k[\hbar]/(\hbar^{N+1})$, respectively. 
As before, we may denote the two latter deformations by $M_{\hbar^{(N+1)}}$ and $H^{\bullet}(M)_{\hbar^{(N+1)}}$, respectively.  

In order to prove the lemma we shall prove that for $N \in \NN$, the map $\tilde{f}^{\leq N}$ induces a quasi-isomorphism between the corresponding bigraded deformations. 
We will proceed by induction. 
Note that it is true for $N = 0$, by the construction of $f$. 
We shall now assume that the statement holds for $0, \dots, N-1$, and we shall prove it for $N$, 
\textit{i.e.} we shall show that $\tilde{f}^{\leq N}$ induces a quasi-isomorphism from $H^{\bullet}(M)_{\hbar^{(N+1)}}$ to $M_{\hbar^{(N+1)}}$. 
It is easy to see that the statement implies the lemma. 

Let us first prove that the cohomology map induced by $\tilde{f}^{\leq N}$ is injective. 
We have to prove that given $h = \sum_{j=0}^{N} h_{j} \hbar^{j} \in H^{\bullet}(M)_{\hbar^{(N+1)}}$ such that $\bar{d}^{\leq N}(h) = 0$ and $\tilde{f}^{\leq N} (h) = \tilde{d}^{\leq N}(\alpha)$, 
for $\alpha = \sum_{j=0}^{N} \alpha_{j} \hbar^{j} \in M_{\hbar^{(N+1)}}$, then there exists $\beta = \sum_{j=0}^{N-1} \beta_{j} \hbar^{j} \in H^{\bullet}(M)_{\hbar^{(N+1)}}$ 
such that $h = \bar{d}^{\leq N}(\beta)$. 
However, since $\bar{d}^{\leq N}(h) = 0$ implies in particular that $\bar{d}^{\leq (N-1)}(h_{\leq (N-1)}) = 0$, and $\tilde{f}^{\leq N} (h) = \tilde{d}^{\leq N}(\alpha)$ yields 
$\tilde{f}^{\leq (N-1)} (h_{\leq (N-1)}) = \tilde{d}^{\leq (N-1)}(\alpha_{\leq N-1})$, by inductive hypothesis we have that there exists $\beta' = \sum_{j=0}^{N-2} \beta_{j} \hbar^{j} \in H^{\bullet}(M)_{\hbar^{N}}$ 
such that $h_{\leq (N-1)} = \bar{d}^{\leq (N-1)}(\beta')$. 
By regarding $\beta'$ in $H^{\bullet}(M)_{\hbar^{(N+1)}}$, we see that $h ' = h - \bar{d}^{\leq N}(\beta') = h'_{N} \hbar^{N}$, 
for some $h'_{N} \in H^{\bullet}(M)$, and 
\[     \tilde{f}^{\leq N} (h') = \tilde{f}^{\leq N}\Big(h - \bar{d}^{\leq N}(\beta')\Big) = \tilde{d}^{\leq N}\Big(\alpha - \tilde{f}^{\leq N}(\beta')\Big).     \]
Hence it suffices to prove our injectivity statement for $h' = h'_{N} \hbar^{N}$, or, equivalently, to assume that $h$ is of the form $h_{N} \hbar^{N}$, for  $h_{N} \in H^{\bullet}(M)$. 
In this case, we have that $\tilde{d}^{\leq (N-1)}(\alpha_{\leq (N-1)})$ vanishes, so by the surjectivity of the cohomology map induced by $\tilde{f}^{\leq (N-1)}$ we have 
that there exists $\beta = \sum_{j=0}^{N-1} \beta_{j} \hbar^{j} \in H^{\bullet}(M)_{\hbar^{N}}$ and $\gamma = \sum_{j=0}^{N-1} \gamma_{j} \hbar^{j} \in M_{\hbar^{N}}$ 
such that $\bar{d}^{\leq (N-1)}(\beta') = 0$ and $\tilde{f}^{\leq (N-1)} (\beta) = \alpha_{\leq (N-1)} + \tilde{d}^{\leq (N-1)} (\gamma)$. 
We shall now regard $\beta$ in $H^{\bullet}(M)_{\hbar^{(N+1)}}$ and $\gamma$ in $M_{\hbar^{(N+1)}}$. 
We claim that $\bar{d}^{\leq N} (\beta) = h$. 
The vanishing of $\bar{d}^{\leq (N-1)}(\beta')$ tells us that 
\[     \bar{d}^{\leq N} (\beta) = \sum_{j=1}^{N} \bar{d}^{j}(\beta_{N-j}) \hbar^{N},     \]
so we must show that $h_{N}$ coincides with $\sum_{j=1}^{N} \bar{d}^{j}(\beta_{N-j})$. 
Seeing that $f = f^{0}$ is injective, it suffices to prove that $f^{0}(h_{N})$ and $f^{0}(\sum_{j=1}^{N} \bar{d}^{j}(\beta_{N-j}))$ coincide. 
This follows from the following computations. 
We first consider 
\begin{align*}
    f^{0}(h_{N}) &= \sum_{i=0}^{N} d^{i} (\alpha_{N-i}) = d^{0}(\alpha_{N}) + \sum_{i=1}^{N} d^{i} (\alpha_{N-i})
    \\
    &= d^{0}(\alpha_{N}) + \sum_{i=1}^{N} d^{i} \Big(\sum_{j=0}^{N-i} f^{j}(\beta_{N-i-j}) - \sum_{j=0}^{N-i} d^{j}(\gamma_{N-i-j})\Big)
    \\
    &= d^{0}(\alpha_{N}) + \sum_{l=1}^{N} \sum_{i=1}^{l} d^{i} f^{l-i}(\beta_{N-l}) - \sum_{l=1}^{N} \sum_{i=1}^{l} d^{i} d^{l-i}(\gamma_{N-l}),
\end{align*}
where we have used $\tilde{f}^{\leq N}(h) = \tilde{d}^{\leq N}(\alpha)$ in the first equality, that  $\tilde{f}^{\leq (N-1)} (\beta) = \alpha_{\leq (N-1)} + \tilde{d}^{\leq (N-1)} (\gamma)$ in the second equality, 
and we have only reindexed the terms of the last two sums in the last member. 
Using that $\tilde{f}^{\leq N}$ is a morphism of complexes and $\tilde{d}^{\leq N}$ is a differential, the last member of the previous collection of equations coincides with 
\begin{align*}
   d^{0}&(\alpha_{N}) - \sum_{l=1}^{N} d^{0} f^{l}(\beta_{N-l}) + \sum_{l=1}^{N} \sum_{i=0}^{l-1} f^{i} \bar{d}^{l-i}(\beta_{N-l}) + \sum_{l=1}^{N} d^{0} d^{l}(\gamma_{N-l}) 
   \\
   &= f^{0}\Big(\sum_{l=1}^{N} \bar{d}^{l}(\beta_{N-l})\Big) + d^{0}\Big(\alpha_{N} - \sum_{l=1}^{N} \big(f^{l}(\beta_{N-l}) - d^{l}(\gamma_{N-l})\big)\Big) 
   \\
   &\phantom{= f^{0}(\sum_{l=1}^{N} \bar{d}^{l}(\beta_{N-l}))} + \sum_{l=1}^{N-1} f^{l} \Big(\sum_{j=1}^{N-l} \bar{d}^{j}(\beta_{N-j-l})\Big)
   \\
   &= f^{0}\Big(\sum_{l=1}^{N} \bar{d}^{l}(\beta_{N-l})\Big) + d^{0}\Big(\alpha_{N} - \sum_{l=1}^{N} \big(f^{l}(\beta_{N-l}) - d^{l}(\gamma_{N-l})\big)\Big),
\end{align*}
where the first and last terms of the second member are obtained from splitting and reindexing the third term of the first member, and we have used in the last equality 
that the last term of the second member vanishes, since, for each $l = 1, \dots, N-1$, $\sum_{j=1}^{N-l} \bar{d}^{j}(\beta_{N-j-l}) = 0$, for this latter identity is tantamount to 
$\bar{d}^{\leq (N-1)}(\beta) = 0$. 
We have that 
\[     f^{0}\Big(h_{N} - \sum_{l=1}^{N} \bar{d}^{l}(\beta_{N-l})\Big) = d^{0}\Big(\alpha_{N} - \sum_{l=1}^{N} \big(f^{l}(\beta_{N-l}) - d^{l}(\gamma_{N-l})\big)\Big).     \]
Taking into account that $f^{0}$ induces a quasi-isomorphism from $H^{\bullet}(M)$ to $M$, but the differential of $H^{\bullet}(M)$ is zero, we get that $f^{0}(h_{N} - \sum_{l=1}^{N} \bar{d}^{l}(\beta_{N-l}))$ vanishes, 
which by the injectivity of $f^{0}$ implies that $h_{N} = \sum_{l=1}^{N} \bar{d}^{l}(\beta_{N-l})$, which was to be shown. 

Let us now prove that  the cohomology map induced by $\tilde{f}^{\leq N}$ is surjective. 
Let $m = \sum_{j=0}^{N} m_{j} \hbar^{j} \in M_{\hbar^{(N+1)}}$ such that $\tilde{d}^{\leq N}(m) = 0$. 
We want to show that there exist $h = \sum_{j=0}^{N} h_{j} \hbar^{j} \in H^{\bullet}(M)_{\hbar^{(N+1)}}$ and $n = \sum_{j=0}^{N} n_{j} \hbar^{j} \in M_{\hbar^{(N+1)}}$ such that 
$\bar{d}^{\leq N}(h) = 0$ and $\tilde{f}^{\leq N}(h) = m + \tilde{d}^{\leq N}(n)$. 
Note that, from $\tilde{d}^{\leq N}(m) = 0$ we have in particular that $\tilde{d}^{\leq (N-1)}(m_{\leq (N-1)}) = 0$. 
Hence, by the inductive hypothesis there exist $h' = \sum_{j=0}^{N-1} h_{j} \hbar^{j} \in H^{\bullet}(M)_{\hbar^{N}}$ and $n' = \sum_{j=0}^{N-1} n_{j} \hbar^{j} \in M_{\hbar^{N}}$ such that 
$\bar{d}^{\leq (N-1)}(h') = 0$ and $\tilde{f}^{\leq (N-1)}(h') = m_{\leq (N-1)} + \tilde{d}^{\leq (N-1)}(n')$. 
We shall regard $h'$ as an element of  $H^{\bullet}(M)_{\hbar^{(N+1)}}$, and $n'$ as an element in $M_{\hbar^{(N+1)}}$. 
In this case, we see that $\bar{d}^{\leq N}(h') = (\sum_{j=1}^{N} \bar{d}^{j}(h_{N-j})) \hbar^{N}$. 
We shall first show that it vanishes. 
In order to do so, we consider the identity $\tilde{f}^{\leq (N-1)} (h') = m_{\leq (N-1)} + \tilde{d}^{\leq (N-1)}(n')$, which can be equivalently rewritten as a collection of identities 
\begin{equation}
\label{eq:ecj}
   \sum_{i=0}^{j} f^{i}(h_{j-i}) = m_{j} + \sum_{i=0}^{j} d^{i}(n_{j-i}),
\end{equation}
for $j = 0, \dots, N-1$. 
We apply $d^{N-j}$ to the $j$-th of the preceding equations and we add up to get  
\[     \sum_{j=0}^{N-1} \sum_{i=0}^{j} d^{N-j} f^{i}(h_{j-i}) = \sum_{j=0}^{N-1} d^{N-j}(m_{j}) + \sum_{j=0}^{N-1} \sum_{i=0}^{j} d^{N-j} d^{i} (n_{j-i}).     \]
By reindexing the sum in the first member and the last sum in the second one we get 
\[     \sum_{l=1}^{N} \sum_{i=1}^{l} d^{i} f^{l-i}(h_{N-l}) = \sum_{j=0}^{N-1} d^{N-j}(m_{j}) + \sum_{l=1}^{N} \sum_{i=1}^{l} d^{i} d^{l-i} (n_{N-l}).     \]
Using that $\tilde{f}^{\leq N}$ is a morphism of complexes and $\tilde{d}^{\leq N}$ is a differential, we rewrite the previous identity as 
\[     - \sum_{l=1}^{N} d^{0} f^{l} (h_{N-l}) + \sum_{l=1}^{N} \sum_{i=0}^{l-1} f^{i} \bar{d}^{l-i}(h_{N-l}) = \sum_{j=0}^{N-1} d^{N-j}(m_{j}) - \sum_{l=1}^{N} d^{0} d^{l} (n_{N-l}).     \]
By splitting and reindexing the second summand of the first member, we obtain the equivalent identity
\begin{multline*}
    - d^{0}\Big(\sum_{l=1}^{N} f^{l} (h_{N-l})\Big) + f^{0}\Big(\sum_{j=1}^{N} \bar{d}^{j}(h_{N-j})\Big) + \sum_{l=1}^{N-1} f^{l} \Big(\sum_{j=1}^{N-l} \bar{d}^{j}(h_{N-j-l})\Big) 
    \\
    = \sum_{j=0}^{N-1} d^{N-j}(m_{j}) - \sum_{l=1}^{N} d^{0} d^{l} (n_{N-l}).     
\end{multline*}
Since, for each $l = 1, \dots, N-1$, $\sum_{j=1}^{N-l} \bar{d}^{j}(h_{N-j-l}) = 0$, for this latter identity is tantamount to $\bar{d}^{\leq (N-1)}(h) = 0$, we get that  
\[     - d^{0}\Big(\sum_{l=1}^{N} f^{l} (h_{N-l})\Big) + f^{0}\Big(\sum_{j=1}^{N} \bar{d}^{j}(h_{N-j})\Big) = \sum_{j=0}^{N-1} d^{N-j}(m_{j}) - \sum_{l=1}^{N} d^{0} d^{l} (n_{N-l}).     \] 
Using that $\tilde{d}^{\leq N}(m)$ vanishes we conclude that 
\[      f^{0}\Big(\sum_{j=1}^{N} \bar{d}^{j}(h_{N-j})\Big) = d^{0}\Big(\sum_{l=1}^{N} f^{l} (h_{N-l}) - m_{N} - \sum_{l=1}^{N} d^{l} (n_{N-l})\Big).     \] 
As $f^{0}$ induces a quasi-isomorphism from $H^{\bullet}(M)$ to $M$, but the differential of $H^{\bullet}(M)$ is zero, we get that $f^{0}(\sum_{j=1}^{N} \bar{d}^{j}(h_{N-j}))$ vanishes, 
which by the injectivity of $f^{0}$ implies that $\sum_{j=1}^{N} \bar{d}^{j}(h_{N-j})$ also vanishes, which was to be shown. 

We will finally prove the surjectivity statement. 
We need to show that there exist $h_{N} \in H^{\bullet}(M)$ and $n_{N} \in M$ such that, for $h = h' + h_{N} \hbar^{N}$ and $n = n' + n_{N} \hbar^{N}$ elements in $H^{\bullet}(M)_{\hbar^{(N+1)}}$ and $M_{\hbar^{(N+1)}}$, 
respectively, $\bar{d}^{\leq N}(h) = 0$ and $\tilde{f}^{\leq N}(h) = m + \tilde{d}^{\leq N}(n)$. 
In view of the fact that $\bar{d}^{0}$ is zero, we see that $\bar{d}^{\leq N}(h) = 0$ holds no matter what value of $h_{N}$ we choose. 
We need only to deal with the second equation. 
Moreover, as $\tilde{f}^{\leq N-1)}(h') = m_{\leq (N-1)} + \tilde{d}^{\leq (N-1)}(n')$, the identity $\tilde{f}^{\leq N}(h) = m + \tilde{d}^{\leq N}(n)$ is tantamount to 
\[     f^{0}(h_{N}) + \sum_{i=1}^{N} f^{i}(h_{N-i}) = m_{N} + \sum_{i=1}^{N} d^{i}(n_{N-i}) + d^{0}(n_{N}).     \]
Hence, it suffices to prove that 
\[     m_{N} + \sum_{i=1}^{N} d^{i}(n_{N-i}) - \sum_{i=1}^{N} f^{i}(h_{N-i})      \]
belongs to the kernel of $d^{0}$, for in that case it can uniquely written as a sum of an element in the image of $f^{0}$ and a coboundary, \textit{i.e.} elements $f^{0}(h_{N})$ and $d^{0}(-n_{N})$, 
which in our case shows the existence of the required $h_{N}$ and $n_{N}$. 
We shall thus compute 
\begin{align*}
   d^{0}&\Big(m_{N} + \sum_{i=1}^{N} d^{i}(n_{N-i}) - \sum_{i=1}^{N} f^{i}(h_{N-i})\Big) 
  \\ 
   &= - \sum_{i=1}^{N} d^{i}(m_{N-i}) - \sum_{i=1}^{N} \sum_{j=1}^{i} d^{j} d^{i-j}(n_{N-i}) + \sum_{i=1}^{N} \sum_{j=1}^{i} d^{j} f^{i-j} (h_{N-i}) 
   \\
   &\phantom{=} - \sum_{i=1}^{N} \sum_{j=0}^{i-1} f^{j} \bar{d}^{i-j} (h_{N-i})
   \\
   &= - \sum_{i=1}^{N} d^{i}\Big(m_{N-i} + \sum_{j=0}^{N-i} d^{j}(n_{N-i-j}) - \sum_{j=0}^{N-i} f^{j} (h_{N-i-j})\Big)  
   \\
   &\phantom{=} - \sum_{j=0}^{N-1} f^{j}\Big(\sum_{i=1}^{N-j}  \bar{d}^{i} (h_{N-i-j})\Big),
\end{align*}
where we have used that $\tilde{d}^{\leq N}(m) = 0$, $\tilde{d}^{\leq N} \circ \tilde{d}^{\leq N} = 0$, and that $\tilde{f}^{\leq N}$ is a morphism of complexes in the first identity, 
and we have reindexed the sums of the second member in the last identity. 
By the inductive hypothesis we have that $\tilde{f}^{\leq (N-1)}(h') = m_{\leq (N-1)} + \tilde{d}^{\leq (N-1)}(n')$, which is tantamount to the vanishing of the arguments of $d^{i}$ (for $i = 1, \dots, N$)
appearing in the first sum of the last chain of identities. 
Furthermore, the vanishing of $\tilde{d}^{\leq (N-1)}(h')$ is equivalent to the vanishing of the arguments of $f^{j}$ (for $j = 0, \dots, N-1$) appearing in the last sum of the previous chain of identities. 
This proves that $m_{N} + \sum_{i=1}^{N} d^{i}(n_{N-i}) - \sum_{i=1}^{N} f^{i}(h_{N-i})$ belongs to the kernel of $d^{0}$, which in turn proves the surjectivity claim. 
The lemma is proved.  
\end{proof}

\begin{remark}
\label{remark:quasi-isodef}
Even though the previous lemma is formulated in our specific setting of formal bigraded deformation, the proof goes completely \textit{verbatim} if the deformations of the complexes are formal (but not necessarily graded). 
\end{remark}

\begin{remark}
\label{remark:dife}
Note that our way to proceed is a priori more general than the one used by Lapin in \cite{La02a}, \cite{La02b}, and his other articles of the kind, where he has used homological perturbation theory, 
since we showed that all of the obstructions to build a deformation of the cohomology $H^{\bullet}(A)$ together with a morphism from this deformation to the deformation of $A$ can be resolved, 
and for any way a resolving these obstructions we obtain in fact a quasi-isomorphism between the formal bigraded deformation of $H^{\bullet}(A)$ and of $A$.  
\end{remark}

\section{\texorpdfstring{Filtered deformations of $A_{\infty}$-algebras}{Filtered deformations of A-infinity-algebras}}
\label{section:fildefo}

We assume for the rest of the article that $k$ is a field, though this hypothesis is not strictly necessary for the main definitions of this section. 
A \emph{(strict and decreasing) filtration} on an $A_{\infty}$-algebra $(A, m_{\bullet})$ is a decreasing filtration $\{ F^{p}A \}_{p \in \ZZ}$ of the underlying graded module of $A$ satisfying 
the compatibility condition 
\begin{equation}
\label{eq:compfiltainf}
     m_{n}(F^{p_{1}}A \otimes \dots \otimes F^{p_{n}}A) \subseteq F^{p_{1}+\dots+p_{n}}A,     
\end{equation}
for all $n \in \NN$, and $p_{1}, \dots, p_{n} \in \ZZ$. 
If $A$ has a unit we further assume that $1_{A} \in F^{0}A$. 
We shall assume that the filtration is exhaustive and graded complete (so Hausdorff), but not necessarily complete. 
Note that, given a filtered $A_{\infty}$-algebra $A$, the compatibility condition \eqref{eq:compfiltainf} tells us that the union $\cup_{p \in \ZZ} F^{p}A$ is also an $A_{\infty}$-algebra 
(such that the inclusion in $A$ is a strict morphism of $A_{\infty}$-algebras). 
Similarly, the structure maps $\{ m_{n} \}_{n \in \NN}$ of $A$ naturally induce an $A_{\infty}$-algebra $\{ \hat{m}_{n}^{\mathrm{gr}} \}_{n \in \NN}$ on the graded completion construction of the underlying filtered graded module of $A$.    
Indeed, for any $p_{1}, \dots, p_{n}$ consider the maps 
\[     (\hat{A}^{\mathrm{gr}})^{\otimes n} \rightarrow (\bigotimes_{j=1}^{n} A/F^{p_{j}}A) \rightarrow A/F^{p_{1}+\dots+p_{n}}A     \] 
given by the composition of the tensor product of canonical projections and the morphism induced by $m_{n}$. 
It is easy to show any pair of these maps for $p_{1}, \dots, p_{n}$ and $p'_{1}, \dots, p'_{n}$ satisfying that $p_{1} + \dots + p_{n} = p'_{1} + \dots + p'_{n} = p$ coincide, 
so we may consider the collection of morphisms of graded modules  
\[     q_{p} :  (\hat{A}^{\mathrm{gr}})^{\otimes n} \rightarrow A/F^{p}A     \]
indexed by $p$. 
This collection forms a system, \textit{i.e.} given any pair of integers $p < p'$, the composition of $q_{p'}$ with the canonical projection $A/F^{p'}A \rightarrow A/F^{p}A$ coincides with $q_{p}$.  
It thus induces a morphism of graded modules $(\hat{A}^{\mathrm{gr}})^{\otimes n} \rightarrow \hat{A}^{\mathrm{gr}}$, which we denoted by $\hat{m}_{n}^{\mathrm{gr}}$. 
The Stasheff identities are immediate. 
If $A$ has a unit $1_{A}$, its image under the canonical map $A \rightarrow \hat{A}^{\mathrm{gr}}$ explained in \eqref{eq:complgr} induces a unit on $\hat{A}^{\mathrm{gr}}$. 
Note however that the completion $\hat{A}$ of the underlying filtered $k$-module of $A$ is not an $A_{\infty}$-algebra in general, because the $k$-module $\hat{A}$ need not be graded. 

Given a filtered $A_{\infty}$-algebra $(A, m_{\bullet})$, the associated bigraded module $\mathrm{Gr}_{F^{\bullet}A}(A)$ has a structure of $A_{\infty}$-algebra 
with structure maps $m_{n}^{\mathrm{gr}}$, for $n \in \NN$, induced by those of $A$. 
It will be referred as the \emph{associated graded $A_{\infty}$-algebra}. 
If $A$ has a unit $1_{A}$, its class (as an element of $F^{0}A/F^{1}A$) induces a unit on the associated graded $A_{\infty}$-algebra. 
This $A_{\infty}$-algebra structure is in fact compatible with the bigrading of $0$-th type. 
Moreover, to a filtered $A_{\infty}$-algebra we may associate the \emph{Rees $A_{\infty}$-algebra} $\mathrm{Re}_{F^{\bullet}A}(A)$, given by 
\begin{equation}
\label{eq:rees0}
     \bigoplus_{p \in \ZZ} (F^{p}A \otimes k.\hbar^{-p}) \subseteq A \otimes k[\hbar^{\pm}],    
\end{equation}
with structure maps 
\begin{equation}
\label{eq:rees} 
    m_{n}^{\mathrm{re}}(a_{1} \otimes \hbar^{-p_{1}}, \dots, a_{n} \otimes \hbar^{-p_{n}}) = m_{n}(a_{1}, \dots, a_{n}) \otimes \hbar^{-(p_{1} + \dots + p_{n})},     
\end{equation}
for $n \in \NN$. 
Note that the ``polynomial part'' is in some sense superfluous (and could be dropped from the definition) but it helps keeping track of the index $p$ coming from the filtration. 
If $A$ has a unit $1_{A}$, then $\mathrm{Re}_{F^{\bullet}A}(A)$ has the strict unit $1_{A} \otimes \hbar^{0}$. 

On the other hand, the decreasing property of the filtration of $A$ tells us that $\mathrm{Re}_{F^{\bullet}A}(A)$ has further the structure of bigradee $k[\hbar]$-module such that the inclusion \eqref{eq:rees0} turns it into a $k[\hbar]$-submodule of $A \otimes k[\hbar^{\pm}]$, provided with the regular action. 
Furthermore, note that if $A$ is unitary then $\mathrm{Re}_{F^{\bullet}A}(A)$ has a canonical inclusion of the bigraded algebra $k[\hbar]$ given by 
$c \hbar^{p} \mapsto 1_{A} \otimes c \hbar^{p}$, for $c \in k$.  
By definition of the structure maps \eqref{eq:rees}, it is direct to check that $\mathrm{Re}_{F^{\bullet}A}(A)$ is in fact an $A_{\infty}$-algebra over $k[\hbar]$. 
We can regard it as a formal bigraded deformation of $\mathrm{Gr}_{F^{\bullet}A}(A)$ as follows. 
For each $p \in \ZZ$ choose a $k$-submodule $X^{p}$ of $F^{p}A$ such that $F^{p}A = X^{p} \oplus F^{p+1}A$, for $k$ is a field. 
Since the filtration of $A$ is Hausdorff, the canonical morphism of $k$-modules 
\[     F^{p}A \rightarrow \prod_{q \in \ZZ_{\geq p}} X^{q}     \]
is injective. 
Furthermore, as we assumed that the filtered $A_{\infty}$-algebra $A$ is graded complete, then we have in fact an isomorphism of graded $k$-modules
\[     F^{p}A \rightarrow \prod_{q \in \ZZ_{\geq p}}\hskip -2.5mm {}^{\mathrm{gr}} \hskip 0.8mm X^{q},     \]
where we recall that the codomain denotes the product in the category of graded $k$-modules. 
Let us denote by $\pi^{p}_{q} : F^{p}A  \rightarrow X^{q}$, for $p \leq q$, the composition of (any of) the previous morphism(s) and the canonical projection. 
Also denote by $\mathrm{proj}_{p} : F^{p}A \rightarrow F^{p}A/F^{p+1}A$ the canonical projection which identifies the $k$-module given by the codomain of the latter map and $X^{p}$. 
We may thus consider the $k$-linear map 
\begin{equation}
\label{eq:morreesdef}
\begin{split}
   \mathrm{Re}_{F^{\bullet}A}(A) &\rightarrow \mathrm{Gr}_{F^{\bullet}A}(A)[[\hbar]]^{\mathrm{bgr}}    
   \\
   \sum_{(p \in \ZZ)} a_{p} \otimes \hbar^{-p} &\mapsto \sum_{(p \in \ZZ)} \sum_{q \in \ZZ_{\geq p}} \mathrm{proj}_{q}(\pi^{p}_{q}(a_{p})) \otimes \hbar^{q-p},
\end{split}
\end{equation}
where we recall that the parentheses for the index $p$ means that the corresponding sums are finitely supported.  
The previous map is clearly well-defined (see \eqref{eq:defo}) and homogeneous $k[\hbar]$-linear of bidegree $(0,0)$. 
By the graded completion assumption on the filtration of $A$ and \eqref{eq:defo} we see that in fact it is bijective, so an isomorphism of bigraded $k[\hbar]$-modules. 

\begin{remark}
\label{remark:impa_inf}
Let $A$ be a filtered (graded complete) $A_{\infty}$-algebra. 
By exactly the same arguments as those used for proving that the collection of multiplicative spectral sequences associated to the complexes \eqref{eq:seqfil} defined from a filtered dg algebra is compatible 
(see the last paragraph of Subsection \ref{subsection:spe}), the exactly same statement holds if we consider a filtered (graded complete) $A_{\infty}$-algebra. 
We remark that the multiplicative structure of the spectral sequence is induced by the product of $A$, after taking cohomology on the complexes \eqref{eq:seqfil}, and considering the corresponding exact couples, 
as explained in Subsection \ref{subsection:spe}. 
\end{remark}

Let $(B_{\hbar},\tilde{m}_{\bullet})$ be a formal bigraded deformation of an (resp., a unitary) $A_{\infty}$-algebra $(B,m_{\bullet})$ provided with a compatible bigrading of $0$-th type. 
Given $p \in \ZZ$, set 
\begin{equation*}
\label{eq:filhatbgr}
     F^{p}B = \bigoplus_{p' \geq p} \bigoplus_{q \in \ZZ} B^{p',q}.     
\end{equation*}
It defines a decreasing filtration on $B$, which is exhaustive and Hausdorff. 
Furthermore, if we regard $B$ as graded $k$-module with the total degree, the previous filtration is of graded $k$-modules. 
We may thus consider the graded completion $\hat{B}^{\mathrm{gr}}$ of the graded module $B$ with the previous filtration. 
By definition we see that the underlying graded $k$-module of $\hat{B}^{\mathrm{gr}}$ is isomorphic to the submodule of 
\[     \bigoplus_{n \in \ZZ} \Big(\prod_{p \in \ZZ} B^{p,n-p} \Big)     \]
formed by the elements 
\[     \hat{b} = \sum_{(n \in \ZZ)} \sum_{p \in \ZZ} b_{p,n-p},     \]
where $b_{p,n-p} \in B^{p,n-p}$, such that for each $n \in \ZZ$ there exists $P_{n} \in \ZZ$ satisfying that $b_{p,n-p}$ vanishes for $p < P_{n}$. 
Moreover, for each $n \in \ZZ$, we define the map 
\[     \operatorname{P}_{n} : \bigoplus_{n \in \ZZ} \Big(\prod_{p \in \ZZ} B^{p,n-p} \Big) \rightarrow \{ - \infty \} \cup \ZZ \cup \{ + \infty \},     \]
satisfying the following conditions. 
For all $p \in \ZZ$, we have that, if $p < \operatorname{P}_{n}(\hat{b})$ (using the convention $-\infty < p < + \infty$, for $p \in \ZZ$) then $b_{p,n-p} = 0$, and that, if $p = \operatorname{P}_{n}(\hat{b})$ 
(this condition only holds in case the latter is an integer) then $b_{p,n-p} \neq 0$. 
Note that the condition for $\hat{b}$ of being an element of $\hat{B}^{\mathrm{gr}}$ can be rephrased as stating that $\mathrm{inf}_{n \in \ZZ}(\operatorname{P}_{n}(\hat{b})) > - \infty$. 
We shall denote the previous infimum by $\operatorname{P}(\hat{b})$, which defines a map with the same domain and codomain as each $\operatorname{P}_{n}$. 
As explained in \eqref{eq:canfilgr}, $\hat{B}^{\mathrm{gr}}$ has a canonical filtration, denoted by $\{ F^{p}\hat{B}^{\mathrm{gr}} \}_{p \in \ZZ}$.  
We have an identification (as bigraded $k[\hbar]$-modules) between $B_{\hbar}$ and 
\[     \bigoplus_{p, q \in \ZZ} \Big(\prod_{r \in \NN_{0}} B^{p+r,q-r} \otimes k.\hbar^{r} \Big).     \]
Let us consider the map $\iota : \hat{B}^{\mathrm{gr}} \rightarrow B_{\hbar}$ defined as 
\[     \hat{b} = \sum_{(n \in \ZZ)} \sum_{p \in \ZZ} b_{p,n-p} \mapsto \sum_{(n \in \ZZ)} \sum_{r \in \NN_{0}} b_{\operatorname{P}_{n}(\hat{b})+r,n-\operatorname{P}_{n}(\hat{b})-r} \hbar^{r}.     \] 
Note that $\iota$ is in fact a morphism of graded $k$-modules from $\hat{B}^{\mathrm{gr}}$ to the graded module $B_{\hbar}$ with the total degree. 

On the other hand, since $\hbar$ has total degree $0$, the quotient $B_{\hbar}/(\hbar - 1)$, which we shall denote by $\bar{B}$, is a graded $k$-module. 
Note that the kernel of the canonical projection $B_{\hbar} \rightarrow \bar{B}$ is given by $B_{\hbar} . (\hbar - 1)$, formed by the set of elements given by $b. (\hbar - 1)$, for $b \in B_{\hbar}$, 
and that $\bar{B}$ is an (resp., a unitary) $A_{\infty}$-algebra over $k$ with structure morphisms induce by those of $B_{\hbar}$. 
Indeed, given $n \in \NN$, the map $\tilde{m}_{n}$ induces a map $\bar{m}_{n} : \bar{B}^{\otimes n} \rightarrow \bar{B}$, if given $b_{1}, \dots, b_{n} \in B_{\hbar}$ such that there exists 
$i \in \NN_{\leq n}$ with $b_{i} = b_{i}' (\hbar -1)$, then  $\tilde{m}_{n}(b_{1}, \dots, b_{n}) \in B_{\hbar} . (\hbar - 1)$. 
This last statement follows directly from the fact that $\tilde{m}_{n}$ is $k[\hbar]$-linear. 

The composition of the map $\iota : \hat{B}^{\mathrm{gr}} \rightarrow B_{\hbar}$ together with the canonical projection $B_{\hbar} \rightarrow \bar{B}$ define a homogeneous morphism $\iota'$ of graded $k$-modules of degree zero. 
By taking into account the specific description of elements on both spaces we see that this composition map is in fact an isomorphism of graded $k$-modules. 
We see that  $\bar{B}$ is provided with a decreasing filtration of graded $k$-modules given by the image of $\{ F^{p}\hat{B}^{\mathrm{gr}} \}_{p \in \ZZ}$ 
under $\iota'$, which we denote by $\{ F^{p}\bar{B} \}_{p \in \ZZ}$.  
It is exhaustive and complete, for the same occurs to $\hat{B}^{\mathrm{gr}}$. 
Furthermore, the filtration $\{ F^{\bullet}\bar{B} \}_{\bullet \in \ZZ}$ is respected by the (resp., unitary) $A_{\infty}$-algebra structure, so $\bar{B}$ becomes a filtered (resp., filtered unitary) $A_{\infty}$-algebra. 
In order to prove this last claim, take nonvanishing homogeneous elements $\bar{b}^{1} \in F^{p_{1}}\bar{B}$, \dots, $\bar{b}^{i} \in F^{p_{i}}\bar{B}$ of degree $n_{1}, \dots, n_{i}$, resp. 
In view of the fact that $\iota'$ is a bijection, we have the corresponding elements $\hat{b}^{1} \in F^{p_{1}}\hat{B}^{\mathrm{gr}}$, \dots, $\hat{b}^{i} \in F^{p_{i}}\hat{B}^{\mathrm{gr}}$ given by taking the inverse image 
under $\iota'$ of the collection considered in the previous statement. 
Note that $\operatorname{P}(\hat{b}^{j}) = \operatorname{P}_{n_{j}}(\hat{b}^{j}) \geq p_{i}$, for all $j = 1, \dots, i$. 
We recall that $\bar{m}_{i}(\bar{b}^{1}, \dots, \bar{b}^{i})$ is the image of $\tilde{m}_{i}(\iota(\hat{b}^{1}), \dots, \iota(\hat{b}^{i}))$ under the canonical projection $B_{\hbar} \rightarrow \bar{B}$. 
By regarding the bidegrees of the elements $\iota(\hat{b}^{1}), \dots, \iota(\hat{b}^{i})$ we see that $\tilde{m}_{i}(\iota(\hat{b}^{1}), \dots, \iota(\hat{b}^{i}))$ is an element of bidegree 
$(\sum_{j=1}^{i} \operatorname{P}(\hat{b}^{j}), \sum_{j=1}^{i} (n_{j} - \operatorname{P}(\hat{b}^{j})))$ in $B_{\hbar}$, so it belongs to the image under $\iota$ of 
$F^{\sum_{j=1}^{i} \operatorname{P}(\hat{b}^{j})}\hat{B}^{\mathrm{gr}} \subseteq F^{\sum_{j=1}^{i} p_{j}}\hat{B}^{\mathrm{gr}}$. 
Hence, we get that $\bar{m}_{i}(\bar{b}^{1}, \dots, \bar{b}^{i})$ lies in $F^{\sum_{j=1}^{i} p_{j}}\bar{B}$, as was to be shown. 

We have the following result.  
\begin{proposition}
\label{proposition:fil-defo}
Let $(B_{\hbar},\tilde{m}_{\bullet})$ be a formal bigraded deformation of an (resp., a unitary) $A_{\infty}$-algebra $(B,m_{\bullet})$ provided with a compatible bigrading of $0$-th type. 
Consider the filtered (resp., filtered unitary) $A_{\infty}$-algebra $\bar{B}$ for the filtration $\{ F^{\bullet}\bar{B} \}_{\bullet \in \ZZ}$ defined at the previous paragraph. 
Then the (resp., unitary) $A_{\infty}$-algebras over $k[\hbar]$ provided with compatible bigradings of $0$-th type given by $B_{\hbar}$ and the Rees $A_{\infty}$-algebra $\mathrm{Re}_{F^{\bullet}\bar{B}}(\bar{B})$ 
are strictly isomorphic. 
Moreover, the formal bigraded deformations over $B$ they define are strictly isomorphic, such that the induced map on $B$ is in fact the identity. 
\end{proposition}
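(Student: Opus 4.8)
The plan is to exhibit one explicit strict isomorphism $\Phi \colon \mathrm{Re}_{F^{\bullet}\bar{B}}(\bar{B}) \to B_{\hbar}$ of bigraded $k[\hbar]$-modules which simultaneously intertwines the two $A_{\infty}$-structures and, modulo $\hbar$, reduces to the identity of $B$. The map is forced by the canonical representatives supplied by the maps $\iota$ and $\iota'$ built in the paragraphs preceding Proposition \ref{proposition:fil-defo}: on a homogeneous generator $\bar{b} \otimes \hbar^{-p}$ with $\bar{b} \in F^{p}\bar{B}$ of total degree $n$, put $\hat{b} = (\iota')^{-1}(\bar{b}) \in F^{p}\hat{B}^{\mathrm{gr}}$ and set $\Phi(\bar{b} \otimes \hbar^{-p}) = \hbar^{\operatorname{P}_{n}(\hat{b}) - p}\cdot\iota(\hat{b})$, extended $k[\hbar]$-linearly. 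Since $\operatorname{P}_{n}(\hat{b}) \geq p$ and $\iota(\hat{b})$ is homogeneous of bidegree $(\operatorname{P}_{n}(\hat{b}), n - \operatorname{P}_{n}(\hat{b}))$ while $\hbar$ has bidegree $(-1,1)$, the image lands in $B_{\hbar}^{p,n-p}$, so $\Phi$ is homogeneous of bidegree $(0,0)$. Well-definedness on the direct sum defining the Rees algebra reduces to the compatibility $\Phi(\bar{b} \otimes \hbar^{-(p-1)}) = \hbar\,\Phi(\bar{b} \otimes \hbar^{-p})$ for $\bar{b} \in F^{p}\bar{B}$, immediate from the formula, and bijectivity follows from the graded completeness of the filtration of $\bar{B}$ together with the explicit descriptions of $\iota$, $\iota'$ and of $\hbar$-topologically free modules recorded in \eqref{eq:defo}; equivalently one reads the module bijection off \eqref{eq:morreesdef} applied to $A = \bar{B}$.

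Next comes the verification that $\Phi$ intertwines the Rees structure maps $m_{n}^{\mathrm{re}}$ of \eqref{eq:rees} with the deformation structure maps $\tilde{m}_{n}$ of $B_{\hbar}$, which is the heart of the matter. Writing $\Phi$ on $m_{n}^{\mathrm{re}}(\bar{b}_{1} \otimes \hbar^{-p_{1}}, \dots, \bar{b}_{n} \otimes \hbar^{-p_{n}}) = \bar{m}_{n}(\bar{b}_{1}, \dots, \bar{b}_{n}) \otimes \hbar^{-(p_{1} + \dots + p_{n})}$, and using that $\tilde{m}_{n}$ is $k[\hbar]$-linear of bidegree $(2-n)(0,1)$ while $\bar{m}_{n}$ is by construction the $\hbar = 1$ reduction $\pi \circ \tilde{m}_{n}$, the desired equality reduces, after cancelling the bookkeeping powers $\hbar^{-\sum p_{i}}$, to the identity $\hbar^{\sum_{i}\operatorname{P}(\hat{b}_{i})}\,\tilde{m}_{n}(\iota(\hat{b}_{1}), \dots, \iota(\hat{b}_{n})) = \hbar^{\operatorname{P}(\hat{c})}\,\iota(\hat{c})$, where $\hat{c}$ is the representative of $\bar{m}_{n}(\bar{b}_{1}, \dots, \bar{b}_{n})$. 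This holds because $x = \tilde{m}_{n}(\iota(\hat{b}_{1}), \dots, \iota(\hat{b}_{n}))$ is \emph{homogeneous} in $B_{\hbar}$, of first degree $\sum_{i}\operatorname{P}(\hat{b}_{i})$, so that its coefficient series coincides with the series defining $\iota(\hat{c})$ after the shift recorded by $\operatorname{P}(\hat{c})$; the filtration compatibility \eqref{eq:compfiltainf} of $\{F^{\bullet}\bar{B}\}$ guarantees $\bar{m}_{n}(\bar{b}_{1}, \dots, \bar{b}_{n}) \in F^{p_{1}+\dots+p_{n}}\bar{B}$, so that $m_{n}^{\mathrm{re}}$ is even defined. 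Strictness of $\Phi$ is automatic, compatibility with the bigradings of $0$-th type is contained in its bidegree being $(0,0)$, and in the unitary case one checks directly that the Rees unit $1_{\bar{B}} \otimes \hbar^{0}$ is sent to the strict unit of $B_{\hbar}$, using $1_{B} \in F^{0}$.

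For the statement about deformations over $B$, I would reduce modulo $\hbar$. On the Rees side this yields the associated graded $\mathrm{Gr}_{F^{\bullet}\bar{B}}(\bar{B})$, which through $\iota'$ and the fact that completion does not alter the associated graded is canonically identified, as a bigraded $A_{\infty}$-algebra, with $B$ itself: its $(p,q)$-component is $F^{p}B^{p+q}/F^{p+1}B^{p+q} = B^{p,q}$, and its structure maps are the $\hbar = 1$ reductions of $\tilde{m}_{n}$, hence $m_{n}$. Under this identification the induced map $\Phi \otimes_{k[\hbar]} \mathrm{id}_{k}$ becomes the identity of $B$, because $\Phi(\bar{b} \otimes \hbar^{-p})$ has a nonzero $\hbar^{0}$-component precisely when $\operatorname{P}_{n}(\hat{b}) = p$, in which case it equals the leading coefficient $b_{p,n-p}$ of $\hat{b}$, which is exactly the class representing $\bar{b}$ in $F^{p}\bar{B}/F^{p+1}\bar{B}$.

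I expect the main obstacle to be the intertwining step. One must track, simultaneously and consistently, two different encodings of the filtration degree — the power of $\hbar$ in $B_{\hbar}$, the deformation direction, and the Rees shift in $\bar{B} \otimes k[\hbar^{\pm}]$ — and control the possible jumps of the leading filtration index $\operatorname{P}_{n}$ under the products $\tilde{m}_{n}$. The homogeneity of $\tilde{m}_{n}(\iota(\hat{b}_{1}), \dots, \iota(\hat{b}_{n}))$ is the device that makes these two encodings agree, and checking it carefully, rather than the purely formal module bijection, is where the real work lies.
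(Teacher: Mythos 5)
Your proposal is correct and follows essentially the same route as the paper: your map $\Phi(\bar{b}\otimes\hbar^{-p})=\hbar^{\operatorname{P}_{n}(\hat{b})-p}\iota(\hat{b})$ is exactly the paper's $\chi(\bar{b}\otimes\hbar^{-p})=\sum_{r\in\NN_{0}}b_{p+r,n-p-r}\hbar^{r}$, and the intertwining is established by the same bidegree/homogeneity computation for $\tilde{m}_{n}(\iota(\hat{b}_{1}),\dots,\iota(\hat{b}_{n}))$ that the paper carries out in the paragraph preceding the proposition and then invokes via the identity \eqref{eq:a_inf}. The identification of the module bijection with \eqref{eq:morreesdef} and the reduction modulo $\hbar$ giving the identity on $B$ likewise match the paper's argument.
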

\begin{proof}
Taking into account that $B_{\hbar}$ is a formal bigraded deformation of $B$, we shall write as usual $\tilde{m}_{i}|_{B} = \sum_{j \in \NN_{0}} m_{i}^{j} \hbar^{j}$. 
We now note that we shall utilize the identification as filtered graded $k$-modules between $\hat{B}^{\mathrm{gr}}$ and $\bar{B}$ given by $\iota'$. 
In particular we shall write an element $\bar{b} \in \bar{B}$ in the form 
\[     \bar{b} = \sum_{(n \in \ZZ)} \sum_{p \in \ZZ} b_{p,n-p},     \]
where $b_{p,n-p} \in B^{p,n-p}$, such that for each $n \in \ZZ$ there exists $P_{n} \in \ZZ$ satisfying that $b_{p,n-p}$ vanishes for $p < P_{n}$, 
and may also apply the maps $\operatorname{P}_{n}$ and $\operatorname{P}$ to elements of $\bar{B}$. 
By this identification, we shall that identify $F^{\bullet}\bar{B}$ with the respective filtration of $\hat{B}^{\mathrm{gr}}$. 
So we may identify the $(p,q)$-th homogeneous component of the bigraded $k[\hbar]$-module $\mathrm{Re}_{F^{\bullet}\bar{B}}(\bar{B})$ with $F^{p}(\hat{B}^{\mathrm{gr}})^{p+q}$. 
Under this identification, the structure map $m_{i}^{\mathrm{re}}$ of $\mathrm{Re}_{F^{\bullet}\bar{B}}(\bar{B})$ are given by composing 
\begin{equation}
\label{eq:a_inf}
     m_{i}^{\mathrm{re}}|_{\hat{B}^{\mathrm{gr}}} = \tilde{m}_{i} \circ \iota^{\otimes i}     
\end{equation}
together with  canonical projection $B_{\hbar} \rightarrow \bar{B}$. 

We define an explicit map $\chi : \mathrm{Re}_{F^{\bullet}\bar{B}}(\bar{B}) \rightarrow B_{\hbar}$ as follows. 
Let $\bar{b} \in F^{p}\bar{B}$ of degree $n$, which we regard by the identifications explained at the previous paragraph as  
\[     \sum_{p' \in \ZZ_{\geq p}} b_{p',n-p'}.     \]
The mapping $\chi$ is defined as the $k$-linear extension of the map 
\[     \bar{b} \otimes \hbar^{-p} \mapsto \sum_{r \in \NN_{0}} b_{p+r,n-p-r} \hbar^{r}.     \]
Note that it is a morphism of bigraded $k[\hbar]$-modules, and that the restriction of $\chi$ to the homogeneous component of bidegree $(p,n-p)$ of 
$\mathrm{Re}_{F^{\bullet}\bar{B}}(\bar{B})$ coincides with the $k[\hbar]$-linear map induced by the restriction to $F^{p}(\hat{B}^{\mathrm{gr}})^{n}$ of $\iota$. 
It also coincides with the map \eqref{eq:morreesdef}, after the obvious identification of (resp., unitary) $A_{\infty}$-algebras over $k$ provided with compatible bigradings between $B$ and $\mathrm{Gr}_{F^{\bullet}\hat{B}^{\mathrm{gr}}}(\hat{B}^{\mathrm{gr}})$. 
The map is clearly seen to be an isomorphism of graded $k[\hbar]$-modules, for its inverse is the $k$-linear extension of  
\[      \sum_{r \in \NN_{0}} b_{p+r,q-r} \hbar^{r} \mapsto \Big(\sum_{r \in \NN_{0}} b_{p+r,q-r}\Big) \otimes \hbar^{-p}.     \]

We claim that $\chi$ is a strict morphism of (resp., unitary) $A_{\infty}$-algebras over $k[\hbar]$, \textit{i.e.} $\tilde{m}_{i} \circ \chi^{\otimes_{k[\hbar]} i} = \chi \circ m_{i}^{\mathrm{re}}$, for all $i \in \NN$. 
This follows immediately from identity \eqref{eq:a_inf}. 
Since we regard the Rees $A_{\infty}$-algebra $\mathrm{Re}_{F^{\bullet}\bar{B}}(\bar{B})$ as a formal bigrading deformation of $B$ 
by means of the map \eqref{eq:morreesdef}, the comments at the previous paragraph tells us that $\chi : \mathrm{Re}_{F^{\bullet}\bar{B}}(\bar{B}) \rightarrow B_{\hbar}$ 
gives in fact a strict isomorphism of formal bigraded deformations inducing the identity on $B$.  
The proposition is thus proved. 
\end{proof}

\section{\texorpdfstring{Formal deformations of $A_{\infty}$-algebras and spectral se\-quences}{Formal deformations of A-infinity-algebras and spectral se\-quences}}
\label{section:final}    

We shall consider the following two definitions, which were studied by Lapin in \cite{La02a}, but were already known and had appeared in the literature (in some particular cases). 
We moreover adapt them in our context of bigraded objects. 
Since these constructions are hard to handle, we provide an equivalent but much more manageable construction, from which we derive several interesting properties. 

Let $(A,m_{\bullet})$ be a minimal (resp., minimal unitary) $A_{\infty}$-algebra provided with a compatible bigrading of $s$-th type, for some $s \in \NN_{0}$, and let $(A_{\hbar},\tilde{m}_{\bullet})$ be a formal bigraded deformation of $A$. 
The minimality assumption means that $m_{1}^{0}$ vanishes. 
We first define the \emph{projected dg algebra $\operatorname{P}(A_{\hbar})$ of $A_{\hbar}$} as the (resp., unitary) dg algebra over $k$ whose underlying bigraded $k$-module is $A$, 
together with the multiplication $m_{2}$ and the differential $\tilde{m}_{1}^{1}$. 
It is a trivial verification that $\operatorname{P}(A_{\hbar})$ is a (resp., unitary) dg algebra provided with a compatible bigrading of $(s+1)$-th type. 
We may in fact regard $\operatorname{P}$ as a functor from the category of formal bigraded deformations to the category of (resp., unitary) dg algebras. 
Let $A_{\hbar}$ and $B_{\hbar}$ be formal bigraded deformations of the (resp., unitary) $A_{\infty}$-algebras $A$ and $B$ provided with compatible bigradings of $s$-th type, for some $s \in \NN_{0}$, 
and let $(f_{\bullet}, \tilde{f}_{\bullet})_{\bullet \in \NN}$ be a morphism of formal bigraded deformations from $A_{\hbar}$ to $B_{\hbar}$. 
Its image under $\operatorname{P}$ is the morphism of (resp., unitary) dg algebras from $\operatorname{P}(A_{\hbar})$ to $\operatorname{P}(B_{\hbar})$ given by $f_{1}$. 
Note that this functor sends quasi-isomorphisms to isomorphisms. 

On the other hand, define the \emph{translated deformation $\operatorname{T}(A_{\hbar})$ of $A_{\hbar}$} as the (resp., unitary) $A_{\infty}$-algebra over $k[\hbar]$ whose underlying bigraded $k[\hbar]$-module coincides with that of $A_{\hbar}$ but with  structure maps given by $\tilde{m}_{i}^{\mathrm{T}} = \tilde{m}_{i} \hbar^{i-2}$. 
Note that $\tilde{m}_{1}^{\mathrm{T}}$ is well-defined because $m_{1}^{0} = 0$. 
Furthermore, by the homogeneity of the definition on the structure maps, the Stasheff identity $\mathrm{SI}(n)$ of $\operatorname{T}(A_{\hbar})$ is equivalent to the Stasheff identity 
$\mathrm{SI}(n)$ of $A_{\hbar}$ multiplied by $\hbar^{(n-3)}$. 
If $A$ is assumed to be unitary the identity axioms are straightforward. 
By a simple calculation the underlying (resp., unitary) $A_{\infty}$-algebra over $k$ of $\operatorname{T}(A_{\hbar})$ has a compatible bigrading of $(s+1)$-th type, and it is in fact a formal bigraded deformation of 
the projected (resp., unitary) dg algebra $\operatorname{P}(A_{\hbar})$. 
We can regard $\operatorname{T}$ as a functor from the category whose objects are formal bigraded deformations of (resp., unitary) $A_{\infty}$-algebras provided 
with a morphisms of formal bigraded deformations to itself. 
Indeed, let $A_{\hbar}$ and $B_{\hbar}$ be formal bigraded deformations of the (resp., unitary) $A_{\infty}$-algebras $A$ and $B$ provided with compatible bigradings of $s$-th type, for some $s \in \NN_{0}$, 
and let $(f_{\bullet}, \tilde{f}_{\bullet})_{\bullet \in \NN}$ be a morphism of formal bigraded deformations from $A_{\hbar}$ to $B_{\hbar}$. 
The image of this morphism under $\operatorname{T}$ is given by the pair $(f_{1},\operatorname{T}(\tilde{f}))$ where we define the morphism $\operatorname{T}(\tilde{f})$ from $\operatorname{T}(A)$ 
to $\operatorname{T}(B)$, whose $i$-th component $\operatorname{T}(\tilde{f})_{i}$ is defined as $\tilde{f}_{i} \hbar^{i-1}$, for $i \in \NN$. 
The Morphism identity $\mathrm{MI}(n)$ of $\operatorname{T}(\tilde{f})_{\bullet}$ is equivalent to the Morphism identity 
$\mathrm{MI}(n)$ of $\tilde{f}_{\bullet}$ multiplied by $\hbar^{(n-2)}$. 
If $A$ is assumed to be unitary the identity axioms are straightforward. 
Notice that this functor preserves quasi-isomorphisms.   

Note now that we may combine Theorem \ref{thm:quasiisofor} with the proceeding definitions to produce a collection deformation as follows. 
Suppose $({}^{0}A,{}^{0}m_{\bullet})$ is an (resp., a unitary) $A_{\infty}$-algebra provided with a compatible bigrading of $0$-th type, and let $({}^{0}A_{\hbar},{}^{0}\tilde{m}_{\bullet})$ be a formal bigraded deformation of ${}^{0}A$. 
By the previous theorem, we may consider the quasi-isomorphic deformation given by the (resp., unitary) $A_{\infty}$-algebra $(H^{\bullet}({}^{0}A), {}^{0}\bar{m}_{\bullet})$, also provided with a compatible bigrading of 
$0$-th type, and the formal bigraded deformation deformation $(H^{\bullet}({}^{0}A)_{\hbar}, {}^{0}\tilde{\bar{m}}_{\bullet})$, which satisfies by construction that ${}^{0}\bar{m}_{1}^{0} = 0$, so we may 
consider the new formal bigraded deformation $\operatorname{T}(H^{\bullet}({}^{0}A)_{\hbar})$ of the projected (resp., unitary) dg algebra $\operatorname{P}(H^{\bullet}({}^{0}A)_{\hbar})$, which has a compatible grading of first type. 
Let us call them ${}^{1}A_{\hbar}$ and ${}^{1}A$, respectively.  
By iteration we obtain thus a collection of (resp., unitary) dg algebras $\{ {}^{r}A \}_{r \in \NN}$, such that ${}^{r}A$ is provided with a compatible bigrading of $r$-th type together with a formal bigraded deformation ${}^{r}A_{\hbar}$, for each 
$r \in \NN$, and they satisfy that $\operatorname{T}(H^{\bullet}({}^{r}A)_{\hbar}) = {}^{(r+1)}A_{\hbar}$ and $\operatorname{P}(H^{\bullet}({}^{r}A)_{\hbar}) = {}^{(r+1)}A$, 
where $H^{\bullet}({}^{r}A)_{\hbar}$ is the formal bigraded deformation of $H^{\bullet}({}^{r}A)$ given by Theorem \ref{thm:quasiisofor}. 
We note that the collection of (resp., unitary) dg algebras $\{ {}^{r}A \}_{r \in \NN}$ is in fact a multiplicative spectral sequence. 
It will be called the \emph{multiplicative spectral sequence associated to the formal bigraded deformation ${}^{0}A_{\hbar}$ of ${}^{0}A$}. 
The collection of formal bigraded deformations produced will be called the \emph{associated family of formal bigraded deformations}.  
When a multiplicative spectral sequence is constructed as before, and we regard it equipped with the formal bigraded deformation satisfying the previous procedure, 
we shall say that it is \emph{enhanced with an (resp., a unitary) $A_{\infty}$-algebra structure}, or more simply, that it has an \emph{(resp., unitary) $A_{\infty}$-enhancement}. 

\begin{remark}
\label{remark:laseq}
Suppose $({}^{0}A,{}^{0}d)$ is a complex of $k$-modules, regarded as an $A_{\infty}$-algebra with vanishing multiplications $m_{i}$, for $i \in \NN_{\geq 2}$, 
provided with a compatible bigrading of $0$-th type, and let $({}^{0}A_{\hbar},{}^{0}\tilde{m}_{\bullet})$ be a formal bigraded deformation of ${}^{0}A$. 
By dropping if necessary all the multiplications ${}^{0}\tilde{m}_{i}$, for $i \in \NN_{\geq 2}$, 
we shall consider ${}^{0}A_{\hbar}$ as a complex over $k[\hbar]$, which gives a formal bigraded deformation of the complex ${}^{0}A$. 
By forgetting if necessary at each step of the procedure explained in the previous paragraph all the involved multiplications indexed by $i \in \NN_{\geq 2}$, 
for we consider only deformations of the differentials, one obtains a spectral sequence $\{ {}^{r}A \}_{r \in \NN}$, 
which will be called the \emph{spectral sequence associated to the underlying deformation of complexes of the formal bigraded deformation ${}^{0}A_{\hbar}$ of ${}^{0}A$}. 
By using a direct transference argument, Lapin has showed in \cite{La01}, Prop. 3.2, that in fact any spectral sequence can be obtained in this manner 
(though he does not consider any bigrading, this can be incorporated \textit{verbatim} in his proof). 
By combining this together with Proposition \ref{proposition:fil-defo} we can conclude that any spectral sequence over a field $k$ can be obtained from a filtration on a complex. 
This result does not seem to have been observed so far. 
\end{remark}

\begin{example}
\label{example:example}
Suppose we have a formal bigraded deformation $(A_{\hbar}, \tilde{m}_{\bullet})$ of an $A_{\infty}$-algebra $(A,m_{\bullet})$ provided with a compatible bigrading of $s$-th type, for some $s \in \NN_{0}$. 
We want to show how the first two stages $E_{s+1}$ and $E_{s+2}$ of the spectral sequence starting at $(s+1)$ associated to the underlying deformation of complexes 
of the formal bigraded deformation $A_{\hbar}$ of $A$ look like. 
As usual, we write $\tilde{m}_{1}|_{A} = \sum_{j \in \NN_{0}} m_{1}^{j} \hbar^{j}$. 
By definition, 
\[     E_{s+1} = \frac{\Ker(m_{1}^{0})}{\mathrm{Im(m_{1}^{0})}},     \]
provided with the differential $d_{s+1}$ given by 
\[     x + \mathrm{Im}(m_{1}^{0}) \mapsto m_{1}^{1}(x) + \mathrm{Im}(m_{1}^{0}),     \]
where $x \in \Ker(m_{1}^{0})$, \textit{i.e.} $m_{1}^{0}(x) = 0$. 
Furthermore, by easy computations we have that 
\[     E_{s+2} = \frac{\Ker(m_{1}^{0}) \cap (m_{1}^{1})^{-1}(\Ker(m_{0}^{1}))}{\mathrm{Im}(m_{1}^{1}|_{\Ker(m_{1}^{0})}) + \mathrm{Im}(m_{1}^{0})}.     \]
The differential $d_{s+2}$ is given as follows. 
Let $x \in \Ker(m_{1}^{0}) \cap (m_{1}^{1})^{-1}(\Ker(m_{0}^{1}))$, which is tantamount to the fact that $m_{1}^{0}(x) = 0$ and there exists $y \in A$ such that $m_{1}^{1}(x) = m_{1}^{0}(y)$. 
Then, by straightforward computations we see that $d_{s+2}$ is given by 
\[     x + \Big(\mathrm{Im}(m_{1}^{1}|_{\Ker(m_{1}^{0})}) + \mathrm{Im}(m_{1}^{0})\Big) \mapsto (m_{1}^{2}(x) - m_{1}^{0}(y)) + \Big(\mathrm{Im}(m_{1}^{1}|_{\Ker(m_{1}^{0})}) + \mathrm{Im}(m_{1}^{0})\Big).     \]
\end{example}

We want to observe that the procedure giving an $A_{\infty}$-enhancement is rather complicated to deal with, for the operations of taking cohomology at each step are in general very complicated to manage. 
We shall provide an equivalent but in our opinion simpler manner to handle with it. 
Suppose $(A,m_{\bullet})$ is an $A_{\infty}$ algebra over $k$ with a compatible bigrading of $s$-th type, for some $s \in \NN_{0}$, and let $(A_{\hbar},\tilde{m}_{\bullet})$ be a formal bigraded deformation. 
We define in this case the $A_{\infty}$-algebra $\operatorname{D}(A_{\hbar})$ over $k[\hbar]$ whose underlying bigraded $k$-module is 
\[     \{ a \in A_{\hbar} : \text{there exists $b \in A_{\hbar}$ such that } \tilde{m}_{1}(a) = \hbar . b \},     \] 
together with the structure maps $\tilde{m}_{i}^{\textrm{D}} = \hbar^{i-2} \tilde{m}_{i}|_{\operatorname{D}(A_{\hbar})}$, for $i \in \NN$. 
Note that the structure maps are well-defined, by definition of $\operatorname{D}(A_{\hbar})$, \textit{i.e.} the image of $\tilde{m}_{i}^{\textrm{D}}$ lies inside $\operatorname{D}(A_{\hbar})$.   
Following the arguments given at the second paragraph of this section for proving that $\operatorname{T}(A_{\hbar})$ is an $A_{\infty}$-algebra over $k[\hbar]$, 
we see that $\operatorname{D}(A_{\hbar})$ is also. 
Furthermore, $\operatorname{D}(A_{\hbar})$ is $\hbar$-torsion-free and bigraded complete, as one may conclude by a simple convergence argument (because $A_{\hbar}$ is bigraded complete and $\hbar . A_{\hbar} \subseteq D(A_{\hbar})$ by definition), so it is an $\hbar$-topologically free bigraded $k[\hbar]$-module, 
by the comments at the second paragraph before Remark \ref{remark:defofor}. 
By the explanation in the latter remark we see that $\operatorname{D}(A_{\hbar})$ is in fact a formal bigraded deformation. 
In fact, it is easily seen that is a formal bigraded deformation of an $A_{\infty}$-algebra provided with a compatible bigrading of $(s+1)$-th type. 
Moreover, if $A$ is minimal, so $m_{1}$ vanishes, $\operatorname{D}(A_{\hbar}) = \operatorname{T}(A_{\hbar})$. 
As before it is direct to see that $\operatorname{D}$ defines a functor from the category of formal bigraded deformations of $A_{\infty}$-algebras provided with a compatible bigrading of $s$-th type, 
for some $s \in \NN_{0}$, to itself. 
Indeed, let $A_{\hbar}$ and $B_{\hbar}$ be formal bigraded deformations of the (resp., unitary) $A_{\infty}$-algebras $A$ and $B$ provided with compatible bigradings of $s$-th type, for some $s \in \NN_{0}$, 
and let $(f_{\bullet}, \tilde{f}_{\bullet})_{\bullet \in \NN}$ be a morphism of formal bigraded deformations from $A_{\hbar}$ to $B_{\hbar}$. 
The image of this morphism under $\operatorname{D}$ is the pair $(f_{1},\operatorname{D}(\tilde{f}))$ with the morphism $\operatorname{D}(\tilde{f})$ from $\operatorname{T}(A)$ to $\operatorname{T}(B)$ 
whose $i$-th component $\operatorname{D}(\tilde{f})_{i}$ is given by $\hbar^{i-1} \tilde{f}_{i}|_{\operatorname{D}(A_{\hbar})}$, for $i \in \NN$. 

We claim that the functor $\operatorname{D}$ preserves quasi-isomorphisms. 
This is a direct computation, but we provide it nonetheless for clarity. 
Suppose that $(f_{\bullet}, \tilde{f}_{\bullet})_{\bullet \in \NN}$ is a quasi-isomorphism of formal bigraded deformations from $(A_{\hbar},\tilde{m}_{\bullet})$ to $(A'_{\hbar},\tilde{m}'_{\bullet})$. 
To prove that $\operatorname{D}(\tilde{f})_{1}$ induces an injective map in cohomology 
means that given $a \in A_{\hbar}$ such that $\tilde{m}_{1}(a) = 0$ (or, equivalently, $a \in \operatorname{D}(A_{\hbar})$ such that $\tilde{m}_{1}^{\textrm{D}}(a) = 0$), 
and given $a' \in D(A'_{\hbar})$ such that $\tilde{f}_{1}(a) = \hbar^{-1} \tilde{m}'_{1}(a')$, there exists $\alpha \in \operatorname{D}(A_{\hbar})$ such that $a = \hbar^{-1} \tilde{m}_{1}(\alpha)$. 
The equality $\tilde{f}_{1}(a) = \hbar^{-1} \tilde{m}'_{1}(a')$ can be rewritten as 
\[     \tilde{m}'_{1}(a') = \hbar \tilde{f}_{1}(a) = \tilde{f}_{1}(\hbar a).     \]
Now, since $\tilde{f}_{1}$ is a quasi-isomorphism from $A_{\hbar}$ to $A'_{\hbar}$, we see that 
there exists $\alpha \in A_{\hbar}$ such that  $\hbar a = \tilde{m}_{1}(\alpha)$. 
This in particular implies that $\alpha \in \operatorname{D}(A_{\hbar})$ and the required equality, which proves the previously claimed injectivity. 
Let us now show that $\operatorname{D}(\tilde{f})_{1}$ induces a surjective map in cohomology. 
This is equivalent to prove that given $a' \in A'_{\hbar}$ such that $\tilde{m}'_{1}(a')$  vanishes (or, equivalently, $a' \in \operatorname{D}(A'_{\hbar})$ such that $(\tilde{m}'_{1})^{\textrm{D}}(a') = 0$), 
there exists $a \in A_{\hbar}$ such that $\tilde{m}_{1}(a) = 0$ (or, equivalently, an element of $a \in \operatorname{D}(A_{\hbar})$ satisfying that $\tilde{m}_{1}^{\textrm{D}}(a) = 0$) and an element $\alpha' \in \operatorname{D}(A'_{\hbar})$
such that $\tilde{f}_{1}(a) = a' + \hbar^{-1} \tilde{m}'_{1}(\alpha')$. 
However, since $\tilde{f}_{1}$ is a quasi-isomorphism from $A_{\hbar}$ to $A'_{\hbar}$, we get that, given $a'$ as before, there exists $a \in A_{\hbar}$ such that $\tilde{m}_{1}(a) = 0$ (so \textit{a fortiori} an element of $\operatorname{D}(A_{\hbar})$) and $\alpha'' \in A'_{\hbar}$ such that $\tilde{f}_{1}(a) = a' + \tilde{m}'_{1}(\alpha'')$. 
By taking $\alpha' = \hbar \alpha''$, which clearly belongs to $\operatorname{D}(A'_{\hbar})$, we get the desired identity which shows the claimed surjectivity.      
The quasi-isomorphism property of $\operatorname{D}(f)_{1}^{0}$ follows directly (see Remark \ref{remark:quasi-isodefbis}). 

As a consequence, we get that the formal bigraded deformation $\operatorname{D}(A_{\hbar})$ and $\operatorname{T}(H^{\bullet}(A)_{\hbar})$ are in fact quasi-isomorphic. 
Indeed, by Theorem \ref{thm:quasiisofor}, there exists a formal bigraded deformation $H^{\bullet}(A)_{\hbar}$, which is quasi-isomorphic to $A_{\hbar}$. 
Then the formal bigraded deformations $\operatorname{D}(H^{\bullet}(A)_{\hbar})$ and $\operatorname{D}(A_{\hbar})$ are also quasi-isomorphic. 
In view of the fact that $\operatorname{D}(H^{\bullet}(A)_{\hbar})$ coincides with $\operatorname{T}(H^{\bullet}(A)_{\hbar})$, for $H^{\bullet}(A)_{\hbar}$ is a deformation of a minimal $A_{\infty}$-algebra, the claim follows. 
This implies that we may build the associated family of formal bigraded deformations (up to quasi-isomorphism) in a more simple fashion, by defining 
$({}^{r}A'_{\hbar},{}^{r}\tilde{m}'_{\bullet})$, for $r \in \NN_{0}$, as the application of the composition functor $\operatorname{D}^{r}$ to $A_{\hbar}$, 
where $\operatorname{D}^{0}$ is the identity functor. 
We shall call this collection of formal bigraded deformations the \emph{reduced associated family of formal bigraded deformations}.
We note furthermore that our construction is easier to handle, due to the fact that, for instance $\operatorname{D}$ produces dg algebra deformation out of dg algebra deformations, 
unlike the usual associated family of formal bigraded deformations. 

Suppose $A$ is an $A_{\infty}$-algebra provided with a compatible bigrading of $s'$-th type, for some $s' \in \NN_{0}$, together with a formal bigraded deformation $A_{\hbar}$.  
We note it determines a collection of exact couples of $(0,s+s')$-th type, for each $s \in \NN_{0}$, as follows. 
The procedure is exactly the same as for exact couples associated to filtrations. 
Let $s \in \NN_{0}$, and consider the collection of short exact sequences 
\begin{equation}
\label{eq:sesdefo}
     0 \rightarrow \operatorname{D}^{s}(A_{\hbar}) \overset{{}^{s}i}{\rightarrow} \operatorname{D}^{s}(A_{\hbar}) \overset{{}^{s}p}{\rightarrow} \operatorname{D}^{s}(A_{\hbar})/(\hbar) \rightarrow 0     
\end{equation}
of dg modules over $k[\hbar]$ (or of complexes of $k$-modules), which we are going to denote by ${}^{s}\mathcal{C}$, 
where the map ${}^{s}i$ is the homogeneous morphism of bidegree $(-1,1)$ given by multiplication by $\hbar$, 
and ${}^{s}p$ is the canonical projection, which is a homogeneous morphism of bidegree $(0,0)$. 
By taking cohomology one gets an exact couple of $(0,s+s')$-th type 
\begin{equation}
\label{eq:diag}
\xymatrix
{
H^{\bullet}(\operatorname{D}^{s}(A_{\hbar})) 
\ar[rr]^{H^{\bullet}({}^{s}i)}
&
&
H^{\bullet}(\operatorname{D}^{s}(A_{\hbar}))
\ar[dl]^{H^{\bullet}({}^{s}p)}
\\
&
H^{\bullet}(\operatorname{D}^{s}(A_{\hbar})/(\hbar))
\ar[ul]^{{}^{s}\delta^{\bullet}}
&
}
\end{equation}
where we recall that ${}^{s}\delta^{\bullet}$ is the delta morphism obtained by means of the Snake lemma. 
The spectral sequence starting at $(s+s'+1)$ it defines is also multiplicative, with the product induced by that of $\operatorname{D}^{s}(A_{\hbar})$. 

\begin{remark}
\label{remark:rees}
If $B$ is a filtered (graded complete) $A_{\infty}$-algebra, and $A_{\hbar} = \mathrm{Re}_{F^{\bullet}B}(B)$ is the Rees $A_{\infty}$-algebra associated to $B$, then, given $s \in \NN_{0}$, 
the underlying complex of $D^{s}(A_{\hbar})$ coincides with the dg $k[\hbar]$-module provided with a compatible bigrading given by \eqref{eq:reescomplex}, and the underlying complex of $D^{s}(A_{\hbar})/(\hbar)$ 
coincides with \eqref{eq:gr}. 
This tells us in particular that, for a formal deformation given by the Rees $A_{\infty}$-algebra of a filtered (graded complete) $A_{\infty}$-algebra, 
the collection of multiplicative spectral sequences associated to the short exact sequences \eqref{eq:sesdefo} is compatible (by Remark \ref{remark:impa_inf}).  
\end{remark}

\begin{example}
\label{example:examplebis}
Suppose we have a formal bigraded deformation $(A_{\hbar}, \tilde{m}_{\bullet})$ of an $A_{\infty}$-algebra $(A,m_{\bullet})$ provided with a compatible bigrading of $s$-th type, for some $s \in \NN_{0}$. 
The maps of the exact couple of $(0,s)$-th type together with its derived exact couple can be more precisely described as follows. 
As usual, a typical element of $A_{\hbar}$ will be denoted by $\sum_{j \in \NN_{0}} a_{j} \hbar^{j}$, for $a_{j} \in A$. 
First, the map $H^{\bullet}({}^{s}i)$ is induced by the multiplication by $\hbar$, so it can be written out explicitly as
\[     \sum_{j \in \NN_{0}} a_{j} \hbar^{j} + \mathrm{Im}(\tilde{m}_{1}) \mapsto \sum_{j \in \NN} a_{j-1} \hbar^{j} + \mathrm{Im}(\tilde{m}_{1}),     \]
where $\sum_{j \in \NN_{0}} a_{j} \hbar^{j} \in \Ker(\tilde{m}_{1})$, which is equivalent to the family of identities 
\begin{equation}
\label{eq:ec}
     \sum_{i=0}^{j} m_{1}^{i}(a_{j-i}) = 0,     
\end{equation}
for all $j \in \NN_{0}$.
The map $H^{\bullet}({}^{s}j)$ is induced by the canonical quotient $A_{\hbar} \rightarrow A$, so it is just 
\[     \sum_{j \in \NN_{0}} a_{j} \hbar^{j} + \mathrm{Im}(\tilde{m}_{1}) \mapsto a_{0} + \mathrm{Im}(m_{1}^{0}),     \]
for $\sum_{j \in \NN_{0}} a_{j} \hbar^{j} \in \Ker(\tilde{m}_{1})$. 
The map ${}^{s}\delta^{\bullet}$ is constructed by means of Snake lemma and it gives 
\[     a + \mathrm{Im}(m_{1}) \mapsto \sum_{j \in \NN} m_{1}^{j+1}(a) \hbar^{j} + \mathrm{Im}(\tilde{m}_{1}),     \]
for $a \in \Ker(m_{1})$. 
The previous description implies in particular that the $(s+1)$-th term of the spectral sequence starting at $(s+1)$, which we shall denote by $(E'_{s+1},d'_{s+1})$, associated to this exact couple 
we described in the paragraph before the previous remark in fact coincides with the $(s+1)$-th term of the spectral sequence $(E_{s+1},d_{s+1})$ starting at $(s+1)$ constructed in Example \ref{example:example}.

The derived couple of \eqref{eq:diag} is given thus by 
\[
\xymatrix
{
H^{\bullet}(\operatorname{D}^{s}(A_{\hbar}))' 
\ar[rr]^{H^{\bullet}({}^{s}i)'}
&
&
H^{\bullet}(\operatorname{D}^{s}(A_{\hbar}))'
\ar[dl]^{H^{\bullet}({}^{s}p)'}
\\
&
H^{\bullet}(\operatorname{D}^{s}(A_{\hbar})/(\hbar))'
\ar[ul]^{({}^{s}\delta^{\bullet})'}
&
}
\]
We shall describe briefly how it looks like. 
By the comments at the last sentence of the previous paragraph we see that the bottom bigraded $k$-module, which we shall denote by $E'_{s+2}$, in fact coincides with the underlying space $E_{s+2}$ 
of the $(s+2)$-th term of the spectral sequence starting at $(s+1)$ constructed in Example \ref{example:example}. 
We shall prove that the corresponding differentials coincide. 
In order to do so, we only need to describe the morphisms of the derived exact couple. 
First, note the easy fact that a typical element of $H^{\bullet}(\operatorname{D}^{s}(A_{\hbar}))'$ can be written as 
\[     \sum_{j \in \NN} a_{j-1} \hbar^{j} + \mathrm{Im}(\tilde{m}_{1}),     \]
where the set of elements $a_{j} \in A$ satisfy the same identities as in \eqref{eq:ec}. 
Now, the map $H^{\bullet}({}^{s}i)'$ is just as before, \textit{i.e.}
\[     \sum_{j \in \NN} a_{j} \hbar^{j} + \mathrm{Im}(\tilde{m}_{1}) \mapsto \sum_{j \in \NN_{\geq 2}} a_{j-1} \hbar^{j} + \mathrm{Im}(\tilde{m}_{1}),     \]
where $\sum_{j \in \NN} a_{j} \hbar^{j} \in \mathrm{Ker}(\tilde{m}_{1})$ is an element of the image of $H^{\bullet}({}^{s}i)$. 
The map $H^{\bullet}({}^{s}j)'$ is given by
\[     \sum_{j \in \NN} a_{j} \hbar^{j} + \mathrm{Im}(\tilde{m}_{1}) \mapsto a_{0} + \Big(\mathrm{Im}(m_{1}^{1}|_{\Ker(m_{1}^{0})}) + \mathrm{Im}(m_{1}^{0})\Big),     \]
for $\sum_{j \in \NN} a_{j} \hbar^{j} \in \Ker(\tilde{m}_{1})$ is also an element of the image of $H^{\bullet}({}^{s}i)$. 
The map $({}^{s}\delta^{\bullet})'$ has the form  
\[     a + \Big(\mathrm{Im}(m_{1}^{1}|_{\Ker(m_{1}^{0})}) + \mathrm{Im}(m_{1}^{0})\Big) \mapsto \sum_{j \in \NN} m_{1}^{j+1}(a) \hbar^{j} + \mathrm{Im}(\tilde{m}_{1}),     \]
for $a \in \Ker(m_{1}^{0}) \cap (m_{1}^{1})^{-1}(\Ker(m_{1}^{0}))$, which means that $m_{1}^{0}(a)=0$ and there exists $a' \in A$ such that $m_{1}^{1}(a)=m_{1}^{0}(a')$. 
However, since $\tilde{m}_{1}(a')$ goes to zero in $E'_{s+2}$ by definition, $({}^{s}\delta^{\bullet})'$ can also be given as 
\[     a + \Big(\mathrm{Im}(m_{1}^{1}|_{\Ker(m_{1}^{0})}) + \mathrm{Im}(m_{1}^{0})\Big) \mapsto \sum_{j \in \NN} (m_{1}^{j+1}(a) - m_{1}^{j}(a')) \hbar^{j} + \mathrm{Im}(\tilde{m}_{1}),     \]
This implies in particular that $H^{\bullet}({}^{s}j)' \circ ({}^{s}\delta^{\bullet})'$ coincides with the map $d_{s+2}$ given in Example \ref{example:example}. 
Hence, the first two terms of the spectral sequences starting at $(s+1)$ associated to the exact couple \eqref{eq:diag} and of the one considered in the mentioned remark coincide.   
\end{example}

The conclusion of the previous two remarks can be strengthen as follows. 
Let $A$ be an $A_{\infty}$-algebra provided with a compatible bigrading of $0$-th type, together with a formal bigraded deformation $A_{\hbar}$. 
We first claim that the collection of multiplicative spectral sequences constructed from the family of short exact sequences given by \eqref{eq:sesdefo} are always compatible, 
regardless of the chosen bigraded formal deformation. 
This follows from the fact that any bigraded formal deformation can be regarded as the Rees $A_{\infty}$-algebra of a filtered (graded complete) $A_{\infty}$-algebra, by Proposition \eqref{proposition:fil-defo}, 
so the claim follows from Remark \ref{remark:rees}. 
Now, from the previous compatibility condition,  we see that the multiplicative spectral sequence associated to the exact couple with $s=0$ given by \eqref{eq:diag} is isomorphic to the 
multiplicative spectral sequence associated to the underlying deformation of complexes of the formal bigraded deformation $A_{\hbar}$ of $A$. 
This follows simply by the quasi-isomorphism of formal bigraded deformations between $\operatorname{D}^{s}(A_{\hbar})$ and ${}^{s}A_{\hbar}$, where the latter were defined at the paragraph before 
Remark \ref{remark:laseq}, for this induces isomorphic multiplicative spectral sequences. 
We have thus proved the following result. 
\begin{proposition}
\label{proposition:compatres}
Let $A$ be an $A_{\infty}$-algebra provided with a compatible bigrading of $0$-th type, together with a formal bigraded deformation $A_{\hbar}$. 
Then, the multiplicative spectral sequence associated to the exact couple with $s=0$ given by \eqref{eq:diag} is isomorphic to the multiplicative spectral sequence associated 
to the formal bigraded deformation $A_{\hbar}$ of $A$ defined at the paragraph before Remark \ref{remark:laseq}. 
\end{proposition}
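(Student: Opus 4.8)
The plan is to compare the two multiplicative spectral sequences by routing everything through the functor $\operatorname{D}$ and the reduced associated family $\operatorname{D}^{s}(A_{\hbar})$, using three facts that are already available in the excerpt: that $\operatorname{D}$ preserves quasi-isomorphisms, that quasi-isomorphic formal bigraded deformations produce isomorphic multiplicative spectral sequences, and the compatibility of the collection of spectral sequences attached to the short exact sequences \eqref{eq:sesdefo}. First I would record that for every $s \in \NN_{0}$ the formal bigraded deformation $\operatorname{D}^{s}(A_{\hbar})$ is quasi-isomorphic to the $s$-th member ${}^{s}A_{\hbar}$ of the associated family defining the enhancement. This is proved by induction on $s$: by Theorem \ref{thm:quasiisofor} the deformation $A_{\hbar}$ is quasi-isomorphic to a deformation $H^{\bullet}(A)_{\hbar}$ of the minimal model, and since $\operatorname{D}$ coincides with $\operatorname{T}$ on deformations of minimal $A_{\infty}$-algebras one has $\operatorname{D}(H^{\bullet}(A)_{\hbar}) = \operatorname{T}(H^{\bullet}(A)_{\hbar}) = {}^{1}A_{\hbar}$; as $\operatorname{D}$ preserves quasi-isomorphisms we get $\operatorname{D}(A_{\hbar}) \simeq {}^{1}A_{\hbar}$, and applying $\operatorname{D}$ repeatedly yields $\operatorname{D}^{s}(A_{\hbar}) \simeq {}^{s}A_{\hbar}$ for all $s$.

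Next I would show that a quasi-isomorphism of formal bigraded deformations induces an isomorphism of the multiplicative spectral sequences obtained from the exact couples \eqref{eq:diag}. A quasi-isomorphism $(f_{\bullet},\tilde{f}_{\bullet})$ has $\tilde{f}_{1}$ a quasi-isomorphism over $k[\hbar]$, so by Remark \ref{remark:quasi-isodefbis} its reduction $f_{1}^{0}$ modulo $\hbar$ is also a quasi-isomorphism; hence, after taking cohomology, it yields compatible isomorphisms on both vertices $H^{\bullet}(\operatorname{D}^{s}(\place))$ and $H^{\bullet}(\operatorname{D}^{s}(\place)/(\hbar))$ of \eqref{eq:diag} which commute with $H^{\bullet}({}^{s}i)$, $H^{\bullet}({}^{s}p)$ and ${}^{s}\delta^{\bullet}$, being induced by a morphism of the short exact sequences \eqref{eq:sesdefo}. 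It is therefore an isomorphism of exact couples, so it induces an isomorphism of all the derived couples and of the associated spectral sequences. The product on each page is induced by $m_{2}$, and since $f_{\bullet}$ is a morphism of $A_{\infty}$-algebras the induced map $H^{\bullet}(f_{1})$ is multiplicative, so these isomorphisms are algebra isomorphisms and the whole isomorphism is one of multiplicative spectral sequences.

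It then remains to identify the multiplicative spectral sequence attached to the exact couple \eqref{eq:diag} with $s=0$ with the enhancement spectral sequence $\{ {}^{r}A \}_{r \in \NN}$. By Proposition \ref{proposition:fil-defo} the deformation $A_{\hbar}$ may be presented as the Rees $A_{\infty}$-algebra of a filtered graded complete $A_{\infty}$-algebra, so by Remark \ref{remark:rees} the collection of multiplicative spectral sequences associated to the short exact sequences \eqref{eq:sesdefo}, indexed by $s \in \NN_{0}$, is compatible. Consequently the couple obtained by deriving the $s=0$ couple coincides, up to isomorphism, with the level-$r$ couple, whose bottom vertex $H^{\bullet}(\operatorname{D}^{r}(A_{\hbar})/(\hbar))$ is, by the first two steps, isomorphic (compatibly with differentials, connecting maps and products) to $H^{\bullet}({}^{r}A)$, which is precisely the corresponding page of the enhancement spectral sequence. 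Assembling these page-by-page isomorphisms gives the claimed isomorphism of multiplicative spectral sequences.

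The main obstacle I anticipate lies in the bookkeeping of the second and third steps rather than in any single conceptual point: one must check that the isomorphisms produced from the quasi-isomorphism are simultaneously compatible with the three exact-couple maps, with the passage to derived couples, and with the products on every page, and that the identification furnished by the compatibility of the collection \eqref{eq:sesdefo} lines up the derivation index of the $s=0$ couple with the level index $r$ governing $\operatorname{D}^{r}(A_{\hbar})$ and the enhancement. Keeping the bidegrees, the index shifts from $(0,s)$-th type to $(0,s+1)$-th type introduced by $\operatorname{D}$, and the multiplicative compatibilities aligned all at once is where the care is required.
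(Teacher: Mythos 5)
Your proposal is correct and follows essentially the same route as the paper: it combines the quasi-isomorphism $\operatorname{D}^{s}(A_{\hbar})\simeq{}^{s}A_{\hbar}$, the fact that quasi-isomorphic formal bigraded deformations yield isomorphic multiplicative spectral sequences, and the compatibility of the collection \eqref{eq:sesdefo} obtained via Proposition \ref{proposition:fil-defo} and Remark \ref{remark:rees}. You merely spell out in more detail the step (isomorphism of exact couples from a quasi-isomorphism) that the paper asserts in one line.
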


As another consequence of the arguments at the previous paragraphs, we have the following immediate result, which also justifies the interest in the $A_{\infty}$-algebra enhancements of multiplicative 
spectral sequences. 
\begin{proposition}
\label{proposition:final}
Given a filtered $A_{\infty}$-algebra $(A, m_{\bullet})$, with filtration $\{ F^{\bullet}A \}_{\bullet \in \ZZ}$, provided with a compatible bigrading of $0$-th type, then the multiplicative spectral sequence associated to the filtration is enhanced with an 
$A_{\infty}$-algebra structure. 
\end{proposition}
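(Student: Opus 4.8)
The plan is to realise the required $A_{\infty}$-enhancement by the Rees $A_{\infty}$-algebra of $A$, and then to identify the resulting spectral sequence with the one associated to the filtration by invoking Remark \ref{remark:rees} and Proposition \ref{proposition:compatres}. Since $A$ is a filtered (graded complete) $A_{\infty}$-algebra with a compatible bigrading of $0$-th type, its associated graded $A_{\infty}$-algebra $\mathrm{Gr}_{F^{\bullet}A}(A)$ carries a compatible bigrading of $0$-th type, and by the construction of Section \ref{section:fildefo} the Rees $A_{\infty}$-algebra $A_{\hbar} = \mathrm{Re}_{F^{\bullet}A}(A)$ is a formal bigraded deformation of $\mathrm{Gr}_{F^{\bullet}A}(A)$ (via the isomorphism \eqref{eq:morreesdef}). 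I would take ${}^{0}A = \mathrm{Gr}_{F^{\bullet}A}(A)$ and ${}^{0}A_{\hbar} = A_{\hbar}$ as the starting datum of the enhancement procedure defined in the paragraph before Remark \ref{remark:laseq}. By construction the multiplicative spectral sequence associated to this formal bigraded deformation is an $A_{\infty}$-enhanced multiplicative spectral sequence, so it suffices to show that it is isomorphic, as a multiplicative spectral sequence, to the one associated to the filtration of $A$.

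Next I would apply Proposition \ref{proposition:compatres}, which identifies the multiplicative spectral sequence associated to the formal bigraded deformation $A_{\hbar}$ with the one associated to the exact couple of $(0,0)$-th type given by \eqref{eq:diag} (the case $s=0$, where $\operatorname{D}^{0}$ is the identity functor). It then remains to identify this latter spectral sequence with the one associated to the filtration, and this is precisely where Remark \ref{remark:rees} enters: since $A_{\hbar} = \mathrm{Re}_{F^{\bullet}A}(A)$ is the Rees $A_{\infty}$-algebra of the filtered graded complete $A_{\infty}$-algebra $A$, the underlying complex of $\operatorname{D}^{0}(A_{\hbar}) = A_{\hbar}$ coincides with \eqref{eq:reescomplex} for $s=0$, and that of $\operatorname{D}^{0}(A_{\hbar})/(\hbar) = A_{\hbar}/(\hbar)$ coincides with \eqref{eq:gr} for $s=0$. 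Hence the short exact sequence \eqref{eq:sesdefo} with $s=0$ is exactly the short exact sequence \eqref{eq:seqfil} with $s=0$ coming from the filtration, so the exact couple \eqref{eq:diag} is the exact couple of $(0,0)$-th type associated to the filtration, and the two spectral sequences agree.

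I would then check that this agreement is one of \emph{multiplicative} spectral sequences, not merely of the underlying additive ones: on both sides the product on each page is induced by the binary operation $m_{2}$, because the Rees product \eqref{eq:rees} is simply the $\hbar$-linear extension of the product of $A$, which is the same product used to make the filtration spectral sequence multiplicative (Remark \ref{remark:impa_inf}). Combining these steps yields that the multiplicative spectral sequence associated to the filtration is isomorphic to the enhanced one determined by $A_{\hbar} = \mathrm{Re}_{F^{\bullet}A}(A)$, so it is enhanced with an $A_{\infty}$-algebra structure, as claimed.

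I expect the main obstacle to lie not in the logical skeleton — which is already assembled from Remark \ref{remark:rees} and Proposition \ref{proposition:compatres} — but in the bookkeeping needed to guarantee that the successive identifications of complexes, exact couples, and products remain compatible as one passes from the filtration, through the Rees algebra, to the functor $\operatorname{D}$ and the exact couple \eqref{eq:diag}. The most delicate point is to confirm that these identifications preserve the multiplicative structure and respect the compatible bigrading of $0$-th type throughout, so that Proposition \ref{proposition:compatres} applies without modification.
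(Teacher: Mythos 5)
Your proposal is correct and follows essentially the same route as the paper: realise the enhancement via the Rees $A_{\infty}$-algebra $\mathrm{Re}_{F^{\bullet}A}(A)$ viewed as a formal bigraded deformation of $\mathrm{Gr}_{F^{\bullet}A}(A)$, and then invoke Proposition \ref{proposition:compatres} (together with Remark \ref{remark:rees} to match the exact couple \eqref{eq:diag} with the one coming from the filtration). If anything, you spell out more carefully than the paper the intermediate identification of the short exact sequences \eqref{eq:sesdefo} and \eqref{eq:seqfil} for $s=0$, which the paper leaves implicit.
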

\begin{proof}
Consider the formal bigraded deformation of $\mathrm{Gr}_{F^{\bullet}A}(A)$ given by the Rees $A_{\infty}$-algebra $\mathrm{Re}_{F^{\bullet}A}(A)$, as explained in Section \ref{section:fildefo}. 
By the previous proposition we see that the multiplicative spectral sequence associated to this filtration is isomorphic to the multiplicative spectral sequence associated 
to the formal bigraded deformation $A_{\hbar}$ of $A$, so the multiplicative spectral sequence associated to the filtration is enhanced with an 
$A_{\infty}$-algebra structure, and the proposition is proved.  
\end{proof}

\bibliographystyle{model1-num-names}
\addcontentsline{toc}{section}{References}

\begin{bibdiv}
\begin{biblist}

\bib{BP}{article}{
   author={Basu, Saugata},
   author={Parida, Laxmi},
   title={Spectral sequences, exact couples, and persistent homology of filtrations},
   date={2013}, 
   eprint={http://arxiv.org/abs/1308.0801},
}

\bib{BM}{article}{
   author = {Belch\'{\i}, Francisco},
   author={Murillo, Aniceto},
   title={$A_{\infty}$-persistence}, 
   date={2014},
   eprint={http://arxiv.org/abs/1403.2395},
}

\bib{Ben}{book}{
   author={Benson, D. J.},
   title={Representations and cohomology. II},
   series={Cambridge Studies in Advanced Mathematics},
   volume={31},
   edition={2},
   note={Cohomology of groups and modules},
   publisher={Cambridge University Press, Cambridge},
   date={1998},
   pages={xii+279},
}

\bib{BGMP}{article}{
   author={Blumberg, Andrew J.},
   author={Gal, Itamar},
   author={Mandell, Michael A.},
   author={Pancia, Matthew},
   title={Robust statistics, hypothesis testing, and confidence intervals
   for persistent homology on metric measure spaces},
   journal={Found. Comput. Math.},
   volume={14},
   date={2014},
   number={4},
   pages={745--789},
}

\bib{Bour2}{book}{
   author={Bourbaki, N.},
   title={\'El\'ements de math\'ematique. Fascicule XXVIII. Alg\`ebre
   commutative. Chapitre 3: Graduations, filtra- tions et topologies.
   Chapitre 4: Id\'eaux premiers associ\'es et d\'ecomposition primaire},
   language={French},
   series={Actualit\'es Scientifiques et Industrielles, No. 1293},
   publisher={Hermann, Paris},
   date={1961},
   pages={183},
}

\bib{ELZ}{article}{
   author={Edelsbrunner, Herbert},
   author={Letscher, David},
   author={Zomorodian, Afra},
   title={Topological persistence and simplification},
   note={Discrete and computational geometry and graph drawing (Columbia,
   SC, 2001)},
   journal={Discrete Comput. Geom.},
   volume={28},
   date={2002},
   number={4},
   pages={511--533},
}

\bib{FP}{article}{
   author={Fialowski, Alice},
   author={Penkava, Michael},
   title={Deformation theory of infinity algebras},
   journal={J. Algebra},
   volume={255},
   date={2002},
   number={1},
   pages={59--88},
}

\bib{Ger64}{article}{
   author={Gerstenhaber, Murray},
   title={On the deformation of rings and algebras},
   journal={Ann. of Math. (2)},
   volume={79},
   date={1964},
   pages={59--103},
}

\bib{GS92}{article}{
   author={Gerstenhaber, Murray},
   author={Schack, Samuel D.},
   title={Algebras, bialgebras, quantum groups, and algebraic deformations},
   conference={
      title={Deformation theory and quantum groups with applications to
      mathematical physics},
      address={Amherst, MA},
      date={1990},
   },
   book={
      series={Contemp. Math.},
      volume={134},
      publisher={Amer. Math. Soc., Providence, RI},
   },
   date={1992},
   pages={51--92},
}

\bib{GW96}{article}{
   author={Gerstenhaber, Murray},
   author={Wilkerson, Clarence W.},
   title={On the deformation of rings and algebras. V. Deformation of
   differential graded algebras},
   conference={
      title={Higher homotopy structures in topology and mathematical physics
      },
      address={Poughkeepsie, NY},
      date={1996},
   },
   book={
      series={Contemp. Math.},
      volume={227},
      publisher={Amer. Math. Soc., Providence, RI},
   },
   date={1999},
   pages={89--101},
}

\bib{GLS}{article}{
   author={Gugenheim, V. K. A. M.},
   author={Lambe, L. A.},
   author={Stasheff, J. D.},
   title={Perturbation theory in differential homological algebra. II},
   journal={Illinois J. Math.},
   volume={35},
   date={1991},
   number={3},
   pages={357--373},
}

\bib{Herrec}{article}{
   author={Herscovich, Estanislao},
   title={Hochschild (co)homology and Koszul duality},
   eprint={http://arxiv.org/abs/1405.2247},
}

\bib{HR}{article}{
   author={Heyneman, Robert G.},
   author={Radford, David E.},
   title={Reflexivity and coalgebras of finite type},
   journal={J. Algebra},
   volume={28},
   date={1974},
   pages={215--246},
}

\bib{K80}{article}{
   author={Kadei{\v{s}}vili, T. V.},
   title={On the theory of homology of fiber spaces},
   language={Russian},
   note={International Topology Conference (Moscow State Univ., Moscow,
   1979)},
   journal={Uspekhi Mat. Nauk},
   volume={35},
   date={1980},
   number={3(213)},
   pages={183--188},
}

\bib{K82}{article}{
   author={Kadeishvili, T. V.},
   title={The algebraic structure in the homology of an $A(\infty)$-algebra},
   language={Russian, with English and Georgian summaries},
   journal={Soobshch. Akad. Nauk Gruzin. SSR},
   volume={108},
   date={1982},
   number={2},
   pages={249--252 (1983)},
}

\bib{Ka}{book}{
   author={Kassel, Christian},
   title={Quantum groups},
   series={Graduate Texts in Mathematics},
   volume={155},
   publisher={Springer-Verlag, New York},
   date={1995},
   pages={xii+531},
}

\bib{Kos47}{article}{
   author={Koszul, Jean-Louis},
   title={Sur les op\'erateurs de d\'erivation dans un anneau},
   language={French},
   journal={C. R. Acad. Sci. Paris},
   volume={225},
   date={1947},
   pages={217--219},
}

\bib{Kos47b}{article}{
   author={Koszul, Jean-Louis},
   title={Sur l'homologie des espaces homog\`enes},
   language={French},
   journal={C. R. Acad. Sci. Paris},
   volume={225},
   date={1947},
   pages={477--479},
}

\bib{La01}{article}{
   author={Lapin, S. V.},
   title={Differential perturbations and $D_\infty$-differential modules},
   language={Russian, with Russian summary},
   journal={Mat. Sb.},
   volume={192},
   date={2001},
   number={11},
   pages={55--76},
   issn={0368-8666},
   translation={
      journal={Sb. Math.},
      volume={192},
      date={2001},
      number={11-12},
      pages={1639--1659},
      issn={1064-5616},
   },
}

\bib{La02a}{article}{
   author={Lapin, S. V.},
   title={$D_\infty$-differential $A_\infty$-algebras and spectral
   sequences},
   language={Russian, with Russian summary},
   journal={Mat. Sb.},
   volume={193},
   date={2002},
   number={1},
   pages={119--142},
   translation={
      journal={Sb. Math.},
      volume={193},
      date={2002},
      number={1-2},
      pages={119--142},
   },
}

\bib{La02b}{article}{
   author={Lapin, S. V.},
   title={$(DA)_\infty$-modules over $(DA)_\infty$-algebras, and
   spectral sequences},
   language={Russian, with Russian summary},
   journal={Izv. Ross. Akad. Nauk Ser. Mat.},
   volume={66},
   date={2002},
   number={3},
   pages={103--130},
   translation={
      journal={Izv. Math.},
      volume={66},
      date={2002},
      number={3},
      pages={543--568},
   },
}

\bib{La08}{article}{
   author={Lapin, S. V.},
   title={Multiplicative $A_\infty$-structure in the terms of spectral
   sequences of fibrations},
   language={Russian, with English and Russian summaries},
   journal={Fundam. Prikl. Mat.},
   volume={14},
   date={2008},
   number={6},
   pages={141--175},
   translation={
      journal={J. Math. Sci. (N. Y.)},
      volume={164},
      date={2010},
      number={1},
      pages={95--118},
   },
}

\bib{LH}{thesis}{
   author={Lef\`evre-Hasegawa, Kenji},
   title={Sur les $A_{\infty}$-cat\'egories},
   language={French},
   type={Thesis (Ph.D.)--Universit\'e Paris 7},
   place={Paris, France},
   date={2003},
   eprint={http://arxiv.org/abs/math/0310337},
   note={Corrections at \texttt{http://www.math.jussieu.fr/~keller/lefevre/TheseFinale/corrainf.pdf}},
}

\bib{Ler46}{article}{
   author={Leray, Jean},
   title={Structure de l'anneau d'homologie d'une repr\'esentation},
   language={French},
   journal={C. R. Acad. Sci. Paris},
   volume={222},
   date={1946},
   pages={1419--1422},
}

\bib{LPWZ04}{article}{
   author={Lu, D. M.},
   author={Palmieri, J. H.},
   author={Wu, Q. S.},
   author={Zhang, J. J.},
   title={$A_\infty$-algebras for ring theorists},
   booktitle={Proceedings of the International Conference on Algebra},
   journal={Algebra Colloq.},
   volume={11},
   date={2004},
   number={1},
   pages={91--128},
}

\bib{LPWZ09}{article}{
   author={Lu, D.-M.},
  author={Palmieri, J. H.},
   author={Wu, Q.-S.},
   author={Zhang, J. J.},
   title={$A$-infinity structure on Ext-algebras},
   journal={J. Pure Appl. Algebra},
   volume={213},
   date={2009},
   number={11},
   pages={2017--2037},
}

\bib{Lun}{article}{
   author={Lunts, Valery A.},
   title={Formality of DG algebras (after Kaledin)},
   journal={J. Algebra},
   volume={323},
   date={2010},
   number={4},
   pages={878--898},
}

\bib{Ma52}{article}{
   author={Massey, W. S.},
   title={Exact couples in algebraic topology. I, II},
   journal={Ann. of Math. (2)},
   volume={56},
   date={1952},
   pages={363--396},
}

\bib{Ma53}{article}{
   author={Massey, W. S.},
   title={Exact couples in algebraic topology. III, IV, V},
   journal={Ann. of Math. (2)},
   volume={57},
   date={1953},
   pages={248--286},
}

\bib{Ma54}{article}{
   author={Massey, W. S.},
   title={Products in exact couples},
   journal={Ann. of Math. (2)},
   volume={59},
   date={1954},
   pages={558--569},
}

\bib{McC99}{article}{
   author={McCleary, John},
   title={A history of spectral sequences: origins to 1953},
   conference={
      title={History of topology},
   },
   book={
      publisher={North-Holland, Amsterdam},
   },
   date={1999},
   pages={631--663},
}

\bib{McC01}{book}{
   author={McCleary, John},
   title={A user's guide to spectral sequences},
   series={Cambridge Studies in Advanced Mathematics},
   volume={58},
   edition={2},
   publisher={Cambridge University Press, Cambridge},
   date={2001},
   pages={xvi+561},
}

\bib{Mer}{article}{
   author={Merkulov, S. A.},
   title={Strong homotopy algebras of a K\"ahler manifold},
   journal={Internat. Math. Res. Notices},
   date={1999},
   number={3},
   pages={153--164},
}

\bib{Prou}{article}{
   author={Prout{\'e}, Alain},
   title={$A_\infty$-structures. Mod\`eles minimaux de Baues-Lemaire et
   Kadeishvili et homologie des fibrations},
   language={French},
   note={Reprint of the 1986 original;
   With a preface to the reprint by Jean-Louis Loday},
   journal={Repr. Theory Appl. Categ.},
   number={21},
   date={2011},
   pages={1--99},
}

\bib{Se50I}{article}{
   author={Serre, Jean-Pierre},
   title={Homologie singuli\`ere des espaces fibr\'es. I. La suite
   spectrale},
   language={French},
   journal={C. R. Acad. Sci. Paris},
   volume={231},
   date={1950},
   pages={1408--1410},
}

\bib{Se50II}{article}{
   author={Serre, Jean-Pierre},
   title={Homologie singuli\`ere des espaces fibr\'es. II. Les espaces de
   lacets},
   language={French},
   journal={C. R. Acad. Sci. Paris},
   volume={232},
   date={1951},
   pages={31--33},
}
		
\bib{Se50III}{article}{
   author={Serre, Jean-Pierre},
   title={Homologie singuli\`ere des espaces fibr\'es. III. Applications
   homotopiques},
   language={French},
   journal={C. R. Acad. Sci. Paris},
   volume={232},
   date={1951},
   pages={142--144},
}

\bib{Se50a}{article}{
   author={Serre, Jean-Pierre},
   title={Homologie singuli\`ere des espaces fibr\'es. Applications},
   language={French},
   journal={Ann. of Math. (2)},
   volume={54},
   date={1951},
   pages={425--505},
}

\bib{Sta}{article}{
   author={Stasheff, James Dillon},
   title={Homotopy associativity of $H$-spaces. I, II},
   journal={Trans. Amer. Math. Soc. 108 (1963), 275-292; ibid.},
   volume={108},
   date={1963},
   pages={293--312},
}

\bib{SS93}{book}{
   author={Shnider, Steven},
   author={Sternberg, Shlomo},
   title={Quantum groups},
   series={Graduate Texts in Mathematical Physics, II},
   note={From coalgebras to Drinfel\cprime d algebras;
   A guided tour},
   publisher={International Press, Cambridge, MA},
   date={1993},
   pages={xxii+496},
}

\bib{W}{book}{
   author={Weibel, Charles A.},
   title={An introduction to homological algebra},
   series={Cambridge Studies in Advanced Mathematics},
   volume={38},
   publisher={Cambridge University Press},
   place={Cambridge},
   date={1994},
   pages={xiv+450},
}

\bib{Wu}{thesis}{
   author={Wu, E.},
   title={Deformation and Hochschild cohomology of $A_{\infty}$-algebras},
   place={Shanghai, China},
   type={Thesis (M.Sc.)--Zhejiang University},
   date={2002},
   eprint={http://www.math.uwo.ca/~ewu22/deformationofA-infinity.dvi},
}

\end{biblist}
\end{bibdiv}

\end{document}